\newcommand{\red}{\mathrm{red}}
\newcommand{\pres}[2]{\langle #1\:|\:#2 \rangle}
\newcommand{\EDToL}{\textsf{EDT0L }}
\newcommand{\LengthRelation}{\textsf{L}}
\newcommand{\WeightedLengthRelation}[1]{\LengthRelation_{\vec{#1}}}
\newcommand{\FreeLength}[1]{(#1^*, \cdot, 1, =, \LengthRelation)}
\newcommand{\FreeWeightedLength}[2]{(#1^*, \cdot, 1, =, \WeightedLengthRelation{#2})}
\newcommand{\FreeLengthNoFree}[1]{(#1, \cdot, 1, =, \LengthRelation)}
\newcommand{\FreeWeightedLengthNoFree}[2]{(#1, \cdot, 1, \WeightedLengthRelation{#2})}
\newtheorem{theorem}{\scshape{Theorem}}[section]
\newtheorem{theoremletter}{Theorem}
\newtheorem{corollaryletter}[theoremletter]{Corollary}
\newtheorem{cor}[theorem]{\scshape{Corollary}}
\newtheorem{prop}[theorem]{\scshape{Proposition}}
\newtheorem{lemma}[theorem]{\scshape{Lemma}}
\newtheorem{question}[theorem]{\scshape{Question}}
\newtheorem*{theorem*}{Theorem}
\newtheorem*{prop*}{Proposition}
\newtheorem*{cor*}{Corollary}
\theoremstyle{definition}
\newtheorem{dfn}[theorem]{\scshape{Definition}}
\newtheorem{example}[theorem]{\scshape{Example}}
\newtheorem{remark}[theorem]{\scshape{Remark}}
\def\subset{\subseteq}
\def\={\cong}
\def\mb{\mathbf}
\def\mc{\mathcal}
\def\mbb{\mathbb}
\def\rel{R}
\newcommand{\lb}{\langle}
\newcommand{\rb}{\rangle}
\title{On equations and first-order theory of one-relator monoids}
\author{Albert Garreta\footnote{Albert Garreta,
		   Department of Mathematics, University of the Basque Country, 
		   48080 Bilbao, Spain,
           \emph{albert.garreta@ehu.eus},
           },\quad
           Robert D.\ Gray\footnote{Robert D.\ Gray,
           School of Mathematics, University of East Anglia, Norwich NR4 7TJ, England, UK,
           \emph{Robert.D.Gray@uea.ac.uk}
\newline\-------------------------------------------------\newline
\indent \emph{2010 Mathematics Subject Classification.} 20M05, 20F70, 20F05,  20F10, 03B25
\newline \indent \emph{Key words and phrases.}
one-relator monoids, Diophantine problem, first-order theory, word equations with length constraints.}}
\begin{document}

\maketitle

\begin{abstract}
 
    We investigate systems of equations and the first-order theory of one-relator monoids. We describe a family $\mc{F}$ of  one-relator monoids  of the form $\langle A\mid w=1\rangle$  where for each monoid $M$ in $\mc{F}$, the longstanding open problem of decidability of word equations with length constraints reduces to the Diophantine problem (i.e.\ decidability of systems of equations) in $M$. We achieve this result by finding an interpretation in $M$ of a free monoid, using only systems of equations  together with  length relations. It follows that each monoid in $\mc{F}$ has undecidable positive AE-theory, hence in particular it has undecidable first-order theory.  
    %
    The family $\mc{F}$ includes many one-relator monoids with torsion $\langle A\mid w^n = 1\rangle$ ($n>1$). 
    In contrast, all one-relator groups with torsion are hyperbolic, and all hyperbolic groups are known to have decidable Diophantine problem. 
    We further describe a different class of  one-relator monoids  with decidable Diophantine problem. 
\end{abstract}

\section{Introduction}\label{sec:intro}

Two important longstanding open algorithmic problems in algebra are the decidability of the conjugacy problem for one-relator groups, and of the word problem for one-relator monoids.  Each of these problems is a special case of the much more general and open question  of whether the Diophantine problem (decidability of systems of equations) is decidable in one-relator groups, or in one-relator monoids.  
A positive answer to any of these would give a positive resolution to one (or both) of the  
open questions about the word and conjugacy problems mentioned above.  On the other hand, if the Diophantine problem turns out to be undecidable for one-relator monoids or one-relator groups, then this would give a natural undecidable decision problem for these classes, which could 
lead to further undecidability 
results for these classes that would then %
have the potential to shed new light on fundamental questions like the word and conjugacy problems.  

The Diophantine problem for one-relator groups has recently received attention in the literature, with positive results obtained for solvable Baumslag-Solitar groups $BS(1,n)=\langle a, b\mid a^{-1} b a = b^n \rangle$ ($n\in\mbb{Z}$); see \cite{Kharlampovich2019}
(in the same paper the authors also solve the problem for   
wreath products of the form $A\wr \mbb{Z}$, where $A$ is a finitely generated abelian group). The aim of this paper is to initiate the study of Diophantine problems (and related model-theoretic questions) for one-relator monoids. We shall obtain both positive and negative (undecidability) results, and will also establish a close connection between these problems and the problem of solving word equations with length constraints, which is a longstanding open problem in computer science.

Our main result describes a family of one-relator monoids $\mc{F}$ such that for any $M\in \mc{F}$ it is possible to reduce decidability of word equations with length constraints ---a longstanding open problem in computer science--- to the Diophantine problem in $M$.  We further prove  decidability of the Diophantine problem for a certain class of one-relator monoids. As a corollary we obtain  undecidability of the positive $AE$-theory (hence of the first-order theory) of any one-relator monoid belonging to $\mc{F}$.  To the best of our knowledge, this provides the first examples of one-relator monoids with undecidable positive AE-theory (with coefficients), excluding the free monoid. Other examples of one-relator monoids  with undecidable first-order theory with coefficients can be found in Theorem 1 of \cite{KharlampovichLopez}.

\medskip

Equations in monoids and groups have been widely studied during the past  decades, being of interest in several areas, ranging from computer science to group  and model theory.  
For a detailed account of the history, motivation and key results in this area we refer the reader to the survey articles \cite{KhMi_rio, KhMi_icm, Diekert_1700,Romankov_survey}.  By the \emph{Diophantine problem}  we mean the algorithmic problem of determining if any given system of equations has a solution or not. 
 Two classical results due to Makanin show that  the Diophantine problem is decidable in any free monoid \cite{makanin2} and in any free group \cite{Makanin}.   Based on Makanin's algorithm, Razborov \cite{Razborov1} provided a powerful description of the sets of solutions to systems of equations in free groups via what were later called Makanin-Razaborov diagrams. This played a key part in the solution to the Tarski problems \cite{Kha_Mia_tarsi, Sela_tarski} regarding groups elementary equivalent to a free group. 
 
In subsequent years new decidability algorithms and descriptions of solutions have appeared: in \cite{Plandowski2004} Plandowski describes a polynomial space algorithm for deciding word equations based on a compression technique. In \cite{jez_linear_space} Je\.{z} shows that word equations can be solved in non-deterministic linear space, and in \cite{EDT0L} it is proved that the solution set of a word equation is an \EDToL language (in particular, it is an indexed language), furthermore this set can be computed in  polynomial space \cite{Diekert2017a}.  
More recently, in  \cite{Sela2016} Sela presents the first in a sequence of papers devoted to investigating the structure of sets of solutions to systems of equations over a free semigroup via a Makanin—Razborov diagram analogue.  Diophantine problems have been extensively considered also in different classes of groups and monoids, see e.g.\ \cite{Deis2007, Diekert2017b, lohrey, Diekert2017a, Casals2, Diekert, lohrey, GMO,  Rozenblat1985}. For us the most relevant result in this direction is the decidability of the Diophantine problem in hyperbolic groups \cite{Rips1995, dahmani}.

A variation relevant to the present paper is the problem of word equations with \emph{length constraints} (in short, WELCs). This consist of a (system of) word equation(s) together with finitely many linear inequalities involving the length of solutions (see Subsection \ref{s: preliminaries_logic} for a formal definition).  The problem of determining whether WELCs are decidable has been  open  for decades now and is of major interest in computer science. Some partial cases and variations have been successfully studied in \cite{length_ctrts, length_ctrts_2, ganesh, Buchi, ganesh_18}. 
As hinted at in \cite{ganesh}, extending word equations with constraints that involve some type of length relation or letter-counting seems to always lead to undecidability. Indeed, many problems closely related to WELCs are undecidable, as shown in some of the previous references.

WELCs are  of interest in industry where they are applied for  program verification, code debugging, security analysis, document spanning, etc. A WELC is a particular instance of a so-called Satisfibility Modulo Theory (SMT) problem, which, roughly speaking, is a satisfiability problem for a first order sentence that combines different types of formulas from different languages (such as the language of monoids, which allows to write word equations, and the language of Presburger arithmetic, which allows to write linear integer equations and inequalities). In practice, such problems are usually tackled by so-called SMT solvers, which are programs that rely on different heuristics for solving certain types of SMT problems (different SMT solvers support different possible languages and fragments of a theory). Usually,  SMT solvers are desgined to be fast and usable in real life, which in turn means that often they are not complete i.e.\ it is not guaranteed that the solver will be able to correctly solve a given input.    We refer to  \cite{ SMT_deMoura, SMT_Barrett} for further information on general SMT solvers and their applications.   There exists a variety of fast SMT
solvers  which can handle in particular word equations with rational constraints and length constraints \cite{hampi, z3str3, cvc4, ABC, stranger, norn, S3P} (we stress again that these programs are not complete, i.e.\ they cannot successfully solve any input problem). 

A further point of interest  is that WELCs are reducible to the problem of solving systems of integer-coefficient polynomial equations in $\mathbb{Z}$ \cite{matyasevich}. Thus a proof of undecidability of WELCs would provide a new solution to Hilbert's 10th Problem, which states that equations in the ring $\mathbb{Z}$ are undecidable \cite{matiasevich_H10}.

\medskip

 We would like to emphasize  how the Diophantine problem generalizes and contains many well-known and studied algorithmic problems. Notably, and as already mentioned, both the word problem and the conjugacy problem are particular cases of the Diophantine problem (see \cite{Araujo2014, Araujo2018, Narendran1984,
Narendran85, Otto1984, Zhang1991, Zhang1992} for definitions and results regarding the conjugacy problem in monoids). Moreover, the left and right divisibility problems in monoids, as well as decidability of Green's orders  $\leq_{\mathcal{R}}$ and $\leq_{\mathcal{L}}$ are  particular instances of the Diophantine problem. Thus proving that the latter is decidable in some specific group or monoid implies that any of the previously mentioned problems are decidable. Conversely, undecidability of any of the mentioned problems implies undecidability of the Diophantine problem. In a similar vein, systems of equations are particular instances of positive $AE$-formulas, which in turn  are first-order formulas. Hence similar considerations hold for the problem of decidability of the positive $AE$-theory, or of the first-order theory, of a monoid or group.

\medskip

There are several natural classes of one-relator 
monoids
for which the word problem has been shown to be decidable. Specifically, Adjan \cite{Adjan1966} showed that all one-relator monoids defined by presentations of the form $\langle A \mid w=1 \rangle$ have decidable word problem.  Monoid presentations where all of the relations are of the form $w=1$ are commonly called \emph{special} presentations.  Adjan solved the word problem for special one-relator monoids by showing that the group of units of such a monoid is a one-relator group, and then reducing the word problem of such a monoid to the word problem of its group of units. Then  decidability of the  word problem for the special one-relator monoid follows from Magnus's theorem.  
Similarly, in \cite{Zhang1991} Zhang proves that the conjugacy problem is decidable 
in the monoid $\pres{A}{w=1}$ provided it is decidable in the group of units of the monoid. 
Other results where an algorithmic problem in a monoid is reduced to the group of units can be found in \cite{Makanin1966, Zhang1992}.
These results immediately suggest the following question: \emph{Can the Diophantine problem of a special one-relator monoid be reduced to the Diophantine problem in its group of units?} Notice that by Proposition \ref{prop_hyperbolic}, a positive answer to this question would imply that the Diophantine problem is decidable in all special one-relator monoids with torsion. 
Note that it follows from the main result of \cite{lallement} that a one-relator monoid of the form $\pres{A}{u=1}$  
has torsion (that is, has a non-identity element of finite order) if and only if $u=w^k$ for some $k \geq 2$.   
Moreover, as we will prove in  this paper, a positive answer to this question would imply decidability of WELCs (see Corollary \ref{t:length:constrains:implication:intro}).

A modern approach to finite special monoid presentations using techniques from the theory of string rewriting systems is given by Zhang in \cite{Zhang1992}. Zhang’s methods will play an important role in the results we prove in this paper for special one-relator monoids.

\medskip

We shall now explain the main results of the paper in more detail. Before doing so, we  first  need to give some background notions. 

Given any one-relator monoid presentation of the form $\pres{A}{r=1}$, defining a monoid $M$, there is a unique decomposition of the 
word $r \equiv r_1 r_2 \ldots r_k$ such that each $r_i$ belongs to $A^+= A^* \setminus\{1\}$, each of the words $r_i$ represents an invertible element of $M$, and no proper non-empty prefix of $r_i$ is invertible, for all $1 \leq i \leq k$.  
The words $r_i$ $(1 \leq i \leq k)$ in this decomposition are called the \emph{minimal invertible pieces} of $r$.  Adjan \cite{Adjan1966} gives an algorithm for computing this decomposition for any one-relator special monoid. Minimal invertible pieces are a key concept for relating a special monoid with its group of units.%
The key idea used in Adjan's algorithm for computing the minimal invertible pieces is the following fact:

\bigskip

\noindent ($\dagger$) If $\alpha, \beta, \gamma \in A^*$ are words such that $\alpha \beta$ and $\beta \gamma$ both represent invertible elements of the monoid $M$ then all of the words $\alpha$, $\beta$ and $\gamma$ also represent invertible elements of $M$.

\bigskip

\noindent This is because $\alpha \beta$ being invertible implies $\beta$ is left invertible, while $\beta \gamma$ being invertible implies $\beta$ is right invertible, hence $\beta$ is invertible, from which it then quickly also follows that $\alpha$ and $\gamma$ are also invertible. 
We say that the words $\alpha \beta$ and $\beta \gamma$ \emph{overlap} in the word $\beta$. 
Adjan's algorithm begins with the defining relator word $r$
from the presentation $\pres{A}{r=1}$ 
which clearly represents an invertible element of $M$
(since $r=1$ in $M$) 
and first considers overlaps that $r$ has with itself. If there are overlaps 
then applying ($\dagger$)
this gives rise to new shorter words that we know are invertible, and then the process is repeated with these words and is iterated until no further overlaps are discovered. 
We refer the reader to \cite[Section~1]{lallement} for a detailed description of the this overlap algorithm. 
We will not need full details of the algorithm here, but we will use the key fact ($\dagger$) above about overlaps when giving examples to which our main results apply. Let us illustrate this now with an example.     
\begin{example}\label{ex:adjan:example}
Let $M$ be the one-relator monoid $\pres{a,b}{abcdcdabab=1}$. Since $ab$ is both a prefix and a suffix of the defining relator word $abcdcdabab$, applying the fact ($\dagger$) above about overlaps with $\alpha \beta \equiv  abcdcdabab \equiv \beta \gamma$ where $\beta \equiv ab$ it 
it follows 
that the words $\beta \equiv ab$, $\gamma \equiv cdcdabab$, and $\alpha \equiv abcdcdab$ are all invertible. 
Then overlapping the invertible word $ab$ with the invertible word $abcdcdab$ it follows from ($\dagger$) that $cdcdab$ and $abcdcd$ are both invertible. 
Then overlapping the invertible words $cdcdab$ and $abcdcd$ we deduce that $cd$ is invertible. 
This shows that this monoid presentation can be written as $\pres{a,b}{(ab)(cd)(cd)(ab)(ab)=1}$ 
where the parentheses indicate a decomposition of the defining relator into invertible pieces $ab$ and $cd$. Moreover, since $ab$ and $cd$ do not overlap with themselves, or each other, the Adjan algorithm will not compute any smaller invertible pieces  
and hence this is the decomposition of the relator into minimal invertible pieces. 
In particular $\{ab ,cd \}$ is the set of minimal invertible pieces of the relator in this example.  
\end{example}
For all the concrete examples of one-relator monoids that we give in this paper, the decomposition of the defining relator into minimal invertible pieces can be computed 
by repeated application of ($\dagger$) in exactly the same manner as in Example~\ref{ex:adjan:example}. 
In each case, we shall refer to this 
as the decomposition into minimal invertible pieces computed by the Adjan overlap algorithm. 
 
Given a set $S$ and a tuple of nonnegative integers $\vec{\lambda}=(\lambda_s\mid s\in S)$, by $|\cdot|_{\vec{\lambda}}$ we denote the \emph{$\vec{\lambda}$-weighted word-length} in $S^*$ defined as $$|w|_{\vec{\lambda}} =_{\text{def}}\sum_{s\in S}\lambda_s |w|_s, \quad  (w\in S^*),$$  where $|w|_s$ denotes the number of occurrences of the letter $s$ in $w$. By $\WeightedLengthRelation{\lambda}$ we denote the \emph{$\vec{\lambda}$-length relation} defined as $\WeightedLengthRelation{\lambda}(w, u)$ if and only if $|w|_{\vec{\lambda}} \leq |u|_{\vec{\lambda}}$.
Note that if $\lambda_s=1$ for all $s\in S$ then $|\cdot|_{\vec{\lambda}}$ and $\WeightedLengthRelation{\lambda}$ are just the standard word length and the  standard length relation, which we denote simply as $|\cdot|$ and $\LengthRelation$, respectively. Hence $\LengthRelation(u,v)$ holds if and only if $|u| \leq |v|$, for any two words $u,v\in S^*$. The tuple $\FreeWeightedLength{S}{\lambda}$ refers to the free monoid $S^*$ equipped with the relation $\WeightedLengthRelation{\lambda}$. This is the natural structure on which to write systems of word equations with ($\vec{\lambda}$-weighted) length constraints. See Subsection \ref{s: preliminaries_logic} for further details.

The main tool we use for reducing one problem to another is that of \emph{interpretability by systems of equations}  or by \emph{positive existential formulas} (Definition \ref{d: interpretability}). This is nothing more than the usual notion of interpretability \cite{Hodges, Marker} restricting all formulas to be systems of equations or disjunctions of systems of equations, respectively.  

Among other results, in this paper we prove the following.  

\begin{theoremletter}[Theorems \ref{thm_main} and \ref{thm_main_2}]
\label{t: main_thm_intro}
Let $M$ be the one-relator monoid $\pres{A}{r=1}$. 
Write $r \equiv r_1 r_2 \ldots r_k$ such that  $r_i \in A^+$ for all $i=1,\dots, k$, each of the words $r_i$ represents an invertible element of $M$, and no proper non-empty prefix of $r_i$ is invertible, for all $1 \leq i \leq k$.  
Set $\Delta = \{r_i \mid 1 \leq i \leq k\}$, so $\Delta$ is the set of minimal invertible pieces of the relator $r$. 
Suppose that:
\begin{enumerate}
    \item[(C1)] no word from $\Delta$ is a proper subword of any other word from $\Delta$, and
    \item[(C2)] there exist distinct words $\gamma, \delta \in \Delta$ with a common first letter $a$.
\end{enumerate}
Then there exists a free monoid $D$ of finite rank $n\geq 2$ and a tuple of positive integer weights $\vec{\lambda} = (\lambda_1, \dots, \lambda_{n})$ such that the free monoid with weighted length relation $\FreeWeightedLengthNoFree{S}{\lambda}$ is interpretable in $M$ by systems of equations.  Consequently, the problem of solving systems of word equations with weighted length constraints is reducible to the problem of solving systems of equations in $M$.

If additionally to (C1) and (C2) we have: 
\begin{enumerate}
    \item[(C3)] no word in $\Delta$ starts with $a^2$,
\end{enumerate}
then the above result holds with $\WeightedLengthRelation{\lambda}$ being the standard length relation $\LengthRelation$, i.e.\ $\LengthRelation(u,v)$ if and only if $|u|\leq |v|$, for $u,v\in D$. Consequently, in this case, the problem of solving systems of word equations with  length constraints is reducible to the problem of solving systems of equations in $M$. 
\end{theoremletter}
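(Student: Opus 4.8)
The plan is to build, directly and by systems of equations, an interpretation in $M$ of a rank-two free monoid carrying a (weighted) length relation, and then to invoke the transport principle accompanying Definition~\ref{d: interpretability}: an interpretation by systems of equations reduces the Diophantine problem of the interpreted structure to that of $M$, so interpreting $\FreeWeightedLengthNoFree{D}{\lambda}$ (resp.\ $\FreeLengthNoFree{D}$) yields the asserted reductions of word equations with weighted (resp.\ standard) length constraints. I would begin from the structure theory of special monoids of Adjan and Zhang: the group of units $G$ of $M$ is the one-relator group $\Gpres{\Delta}{\tilde r}$ on the minimal invertible pieces, every piece is invertible, and $M$ admits a confluent, length-reducing rewriting system with geodesic normal forms. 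Using (C2) fix distinct pieces $\gamma,\delta\in\Delta$ with common first letter $a$, say $\gamma\equiv a\gamma'$ and $\delta\equiv a\delta'$. From ($\dagger$) and the minimality of the pieces (no proper non-empty prefix of a piece is invertible) I would record that $a$ is \emph{non-invertible} in $M$, whence $\gamma'$ and $\delta'$ are non-invertible as well; the rigidity of the non-invertible generator $a$ is what will provide an exact counter inside $M$.

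For the free monoid itself, consider the substitution $\phi\colon\{x,y\}^*\to A^*$, $x\mapsto\gamma$, $y\mapsto\delta$. Condition (C1) makes $\{\gamma,\delta\}$ a prefix code (neither piece is a proper prefix of the other), so $\phi$ is injective with image free of rank two in $A^*$; using the rewriting system together with (C1) I would upgrade this to show that $\{\gamma,\delta\}$ is a code \emph{in $M$}, so that distinct positive words in $\gamma,\delta$ are distinct in $M$ and generate a genuinely free submonoid of rank $n=2$. To interpret this free monoid I would encode each word $s$ not by its value alone but by a tuple pairing $\overline{\phi(s)}$ with an $a$-power \emph{counter} $a^{|s|_{\vec{\lambda}}}$, where $\vec{\lambda}=(|\gamma|,|\delta|)$, so that $|s|_{\vec{\lambda}}$ is exactly the $A^*$-length of $\phi(s)$. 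The universe is the set of consistent value–counter pairs, cut out by a system of equations forcing the counter to advance by $|\gamma|$ (resp.\ $|\delta|$) at each right multiplication by $\gamma$ (resp.\ $\delta$); multiplication is then inherited from $M$ coordinatewise and is again equationally defined.

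With this encoding the length relation becomes transparent: $\WeightedLengthRelation{\lambda}(s,t)$ holds precisely when the counter of $s$ left-divides that of $t$, which is positive existential,
\[
\WeightedLengthRelation{\lambda}(s,t)\ \Longleftrightarrow\ \exists z\ \bigl(a^{|s|_{\vec{\lambda}}}\,z = a^{|t|_{\vec{\lambda}}}\bigr).
\]
Faithfulness rests on two facts about $M$: that $a$ has infinite order, so that the counter is exact; and — the delicate one — that $a^{i}z=a^{j}$ forces $i\le j$, i.e.\ that a block of $a$'s cannot be shortened on the right. Establishing the latter, and equally the consistency clauses defining the universe, reduces to controlling how the relator $\tilde r$ can overlap powers of $a$ and concatenations of the pieces; this confluence and normal-form analysis is where I expect the main difficulty of the whole proof to lie.

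It remains to pass from weighted to standard length under (C3). When no piece starts with $a^2$, a single leading $a$ can neither be absorbed into nor created from the interior of a piece, so in the rewriting system each occurrence of a piece contributes an isolated, individually detectable leading $a$. This lets me replace the length-weighted counter by one incremented by a fixed unit per letter of $s$, thereby realising the \emph{standard} length relation $\LengthRelation$ on $D$ rather than $\WeightedLengthRelation{\lambda}$. Having interpreted $\FreeWeightedLengthNoFree{D}{\lambda}$ in general, and $\FreeLengthNoFree{D}$ under the additional hypothesis (C3), by systems of equations, the claimed reductions of word equations with weighted, resp.\ standard, length constraints to the Diophantine problem in $M$ follow at once from the transport principle for interpretations by systems of equations.
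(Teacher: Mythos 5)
Your proposal founders at its foundation: the free monoid you choose to interpret is the submonoid of $M$ generated by the pieces $[\gamma],[\delta]$ themselves, but this submonoid is never free. The defining relator decomposes as $r\equiv r_1r_2\ldots r_k$ with every $r_i\in\Delta$, so $[r_1][r_2]\cdots[r_k]=1$ is a non-trivial positive word in the images of the pieces that collapses to the identity; for instance in $\pres{a,b,c}{(ab)(ac)(ab)=1}$ one has $[\gamma][\delta][\gamma]=1$ and $\langle[\gamma],[\delta]\rangle$ is a group (indeed $\mathbb{Z}$), not a rank-two free monoid. The pieces are invertible, so they live in the group of units and (C1) being a prefix-code condition in $A^*$ says nothing about freeness of their images in $M$. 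The paper avoids this entirely by taking for $D$ a different monoid: the set $F$ of reduced words $w$ with $a^iw=1$ for some $i>0$ (right inverses of powers of the \emph{non-invertible} letter $a$), which is shown to be free of rank at least two using (C1), (C2) and Zhang's complete rewriting system, with free generators of the form $\gamma'(a\gamma')^{-1}$, $\delta'(a\delta')^{-1}$.

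Two further steps of your argument would also fail even granting a free monoid. First, the ``consistent value--counter pairs'' universe cannot be cut out by a system of equations as you describe: forcing the counter to advance by $|\gamma|$ or $|\delta|$ ``at each right multiplication'' is an inductive condition quantifying over all decompositions of $s$, i.e.\ a universal statement, and the graph of the length homomorphism on a free submonoid is not e-definable in general. The whole point of the paper's choice of $F$ is that there the counter comes for free: the weight of $w\in F$ is the unique $i$ with $a^iw=1$, so membership and weight are accessed by the single system $ay=ya,\ yx=1$. Second, your equational rendering of the length relation, $\exists z\,(a^{|s|}z=a^{|t|})$, is false in $M$: by (C2) the letter $a$ is \emph{right} invertible (from $a\gamma'\cdot\gamma^{-1}=1$), so every power of $a$ left-divides every other power and the formula holds for all $s,t$. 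The paper instead characterises $\omega(d_1)\le\omega(d_2)$ by the existence of $c\in\langle a\rangle$ with $cd_2=1$ and $cd_1\in\langle a\rangle$, and proves the forward direction of that equivalence using the infiniteness of $\langle a\rangle$ and the structure of $F$ (Lemma~\ref{l: key_general_lemma}). Finally, note that the weights obtained are not the word-lengths $(|\gamma|,|\delta|)$ you propose but the exponents $\omega(w_j)$ attached to the free generators of $F$, which is exactly why (C3) yields the standard length relation: it forces all these exponents to equal $1$, whereas $|\gamma|,|\delta|$ would remain $\ge 2$.
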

\begin{example}\label{ex:ThmA:examples}
We now give several examples to which Theorem~\ref{t: main_thm_intro} applies.  
For each of these examples the decomposition into minimal invertible pieces can be computed using Adjan overlap algorithm in the same way as in Example~\ref{ex:adjan:example}. 

Some examples of monoids satisfying conditions  (C1), (C2) and (C3) are $\langle a,b,c\mid (ab)(ac)(ab)=1\rangle$ and  $\langle a,b,c\mid ((ab)(ac)(ab))^n=1\rangle$ for $n\geq 1$, where we indicate the minimal invertible pieces with parentheses. 
In all these examples the set of minimal invertible pieces is $\Delta = \{ab, ac\}$. 
Indeed, considering overlaps of the defining relator word $((ab)(ac)(ab))^n$ with itself implies that $(ab)(ac)(ab)$ is invertible, and then overlapping this word with itself we deduce that $ab$ and $ac$ are both invertible. 
Since this pair of words do not overlap with themselves, or each other, it follows that these are the minimal invertible pieces. 
This set of words
$\Delta = \{ab, ac\}$
clearly satisfies conditions (C1), (C2) and (C3).   

In the two-generated case we have 
examples satisfying all of (C1), (C2) and (C3) such as  
 $$\langle a,b\mid (ababb)(abaabb)(ababb)=1\rangle$$ and $$\langle a,b\mid ((aba^nb^{n+1})(aba^{n+1}b^{n+1})(aba^nb^{n+1}))^m=1\rangle,$$ for all $n, m\geq 1$, where again we identify the decomposition into minimal invertible pieces using parentheses. 
For this second family of examples, by overlapping the relator word with itself we deduce that 
$(aba^nb^{n+1})(aba^{n+1}b^{n+1})(aba^nb^{n+1})$
is invertible and hence overlapping this word with itself we deduce that  
each of $aba^nb^{n+1}$ and 
$aba^{n+1}b^{n+1}$ is an invertible word. Since this pair of words do not overlap with themselves or with each other, it follows that the set of minimal invertible pieces for this example is $\Delta = \{ aba^nb^{n+1}, aba^{n+1}b^{n+1} \}$. 
It is then straightforward to verify that this set of words satisfies conditions (C1), (C2) and (C3).  
 
Dropping (C3) there are simpler two-generated examples which satisfy both (C1) and (C2) e.g. $\langle a,b\mid ((aab)(abb)(aab))^n=1\rangle$ ($n\geq 1$) with set of minimal invertible pieces $\{aab, abb\}$.
   As seen in these examples, the family of one-relator monoids  satisfying conditions (C1), (C2), and (C3) includes many one-relator monoids with torsion $\langle A \mid w^n=1\rangle$, $n> 1$, which  by Proposition \ref{prop_hyperbolic} 
   have hyperbolic group of units and hyperbolic undirected Cayley graph. We stress again that one-relator groups with torsion are hyperbolic and thus have decidable Diophantine problem \cite{Rips1995, dahmani}. 
\end{example}

In another direction we prove the following  result, which can be used to obtain many examples of special one-relator monoids with decidable Diophantine problem, as described in Section \ref{sec:final:section:applications:open:problems}.

\begin{theoremletter}[Theorem \ref{thm:free:prod:application}]\label{t: decidable_intro}
Let $M = \pres{A}{w=1}$ and suppose that every letter in $w$ is invertible in $M$. Let $G = \pres{B}{w=1}$ where $B \subseteq A$ is the set of letters that appear in $w$. Then $G$ is a one-relator group, and if the Diophantine problem is decidable in $G$ then it is decidable in $M$.  
\end{theoremletter}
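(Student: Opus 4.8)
The plan is to use the hypothesis to split $M$ as a free product of its group of units with a free monoid, and then reduce equations over this free product to equations over the two factors.

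First I would partition the alphabet as $A = B \sqcup C$, where $B$ is the set of letters occurring in $w$ and $C = A \setminus B$; note $w \in B^*$ by construction. The first key step is to show that the monoid $N := \Mpres{B}{w=1}$ is already a group, and in fact equals $G = \Gpres{B}{w=1}$. For this I would use the retraction $\pi : M \to N$ induced by $b \mapsto b$ $(b \in B)$ and $c \mapsto 1$ $(c \in C)$, which is a well-defined monoid homomorphism since $w$ contains no letter of $C$. As every letter of $w$ is invertible in $M$ by hypothesis, its $\pi$-image is invertible in $N$; hence every generator of $N$ is invertible, so the submonoid of units of $N$ is all of $N$, i.e.\ $N$ is a group. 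Being a group generated by $B$ in which $w=1$, one checks via the mutually inverse canonical homomorphisms (fixing generators) that $N \cong \Gpres{B}{w=1} = G$, a one-relator group. This also recovers, in the language of Adjan's theory, that the minimal invertible pieces of $r=w$ are exactly the single letters of $B$, so that $G$ is precisely the group of units of $M$.

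Next I would establish the isomorphism $M \cong G * C^*$ of monoids, where $*$ is the free product of monoids and $C^*$ is the free monoid on $C$. This is immediate from presentations: the monoid free product $N * C^*$ is presented by $\Mpres{B \sqcup C}{w=1}$, which is exactly the defining presentation $\pres{A}{w=1}$ of $M$; combined with $N = G$ from the previous step this yields $M \cong G * C^*$. Finally I would reduce the Diophantine problem of $M = G * C^*$ to that of its factors: $C^*$ is a free monoid, whose Diophantine problem is decidable by Makanin \cite{makanin2}, and $G$ has decidable Diophantine problem by hypothesis. I would then apply the general reduction for free products, decomposing each variable according to its alternating $G$/$C^*$ syllable structure, enumerating the finitely many ``shapes'' describing how the syllables of the variables cancel and amalgamate at word boundaries, and, for each shape, splitting the system into independent subsystems over $G$ and over $C^*$.

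The main obstacle I expect is exactly this last reduction over the free product: controlling the a priori unbounded number of syllables of a putative solution and the cancellation/amalgamation at syllable boundaries, and verifying that the per-factor subproblems produced are genuine instances of the \emph{plain} Diophantine problems in $G$ and in $C^*$, rather than requiring rational or recognizable constraints in the factors. Additional care is needed because one factor is a monoid and not a group, so the free product and the accompanying normal-form analysis must be carried out in the category of monoids rather than of groups.
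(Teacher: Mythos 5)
Your first two steps---showing that every generator of $N = \Mpres{B}{w=1}$ is invertible so that $N$ is a group isomorphic to $G = \Gpres{B}{w=1}$, and then reading off $M \cong G \ast C^*$ directly from the presentations---follow exactly the route the paper takes (the paper states the decomposition $M \cong G \ast C^*$ with less detail, so your elaboration here is fine and correct).

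The gap is in your final step. The paper does not prove the free-product reduction from scratch: it invokes \cite[Theorem~19]{Diekert}, checking that both factors satisfy Assumption~17 of that paper (a cancellativity condition satisfied by every group and every free monoid) and Assumption~18 (decidability of the Diophantine problem in each factor), and that theorem directly yields decidability of the Diophantine problem in the free product. Your proposed replacement---``enumerating the finitely many shapes'' describing how the syllables of the variables cancel and amalgamate---does not work as stated: a solution may have arbitrarily many syllables, so the set of shapes is infinite and cannot be enumerated and checked case by case; bounding the syllable complexity of a minimal solution is exactly the hard content of the cited theorem. The further obstacles you flag yourself (whether the per-factor subproblems are instances of the \emph{plain} Diophantine problem rather than the Diophantine problem with rational or recognizable constraints, and the fact that one factor is a free monoid rather than a group) are real and are precisely what the hypotheses of \cite[Theorem~19]{Diekert} are engineered to handle; acknowledging them does not discharge them. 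As written, your argument establishes the reduction of the theorem to a statement about equations over monoid free products that you have neither proved nor cited, so the proof is incomplete at exactly the point where the paper leans on the literature.
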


Comparing Theorem~\ref{t: decidable_intro} with Theorem~\ref{t: main_thm_intro}, in both results we decompose the defining relator $r \equiv r_1 r_2 \ldots r_k$ into words $r_i$ that are invertible in $M$. Theorem~\ref{t: main_thm_intro} is the case where all the words $r_i$ have size one, i.e. they are single letters, in which case the theorem shows a reduction of the Diophantine problem of $M$ to its group of units.  

In Section \ref{sec:final:section:applications:open:problems} we provide some examples of monoids satisfying the hypotheses of Theorem \ref{t: decidable_intro}, as well as a list of questions and open problems.

\medskip

An immediate consequence of Theorem \ref{t: main_thm_intro} is the following
\begin{corollaryletter}
\label{t:length:constrains:implication:intro}
If word equations with length constraints are undecidable, then so is the Diophantine problem in any one-relator monoid of the form $\langle A \mid w=1\rangle$ satisfying conditions (C1), (C2) and (C3). On the other hand, proving that the Diophantine problem is decidable in some of these monoids would imply that word equations with length constraints are decidable.

In particular, if the Diophantine problem is decidable for all one-relator monoids with torsion 
$\langle A \mid w^n=1\rangle$, with $n>1$, 
then this would imply that word equations with length constraints are decidable.
\end{corollaryletter}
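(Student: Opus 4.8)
The plan is to obtain this corollary as a direct consequence of Theorem~\ref{t: main_thm_intro}, invoking the stronger conclusion available once hypothesis (C3) is assumed. First I would record that the two implications in the first paragraph are contrapositives of one another: the claim ``if WELCs are undecidable then the Diophantine problem in $M$ is undecidable'' is logically equivalent to ``if the Diophantine problem in $M$ is decidable then WELCs are decidable''. Consequently it suffices to exhibit a single reduction, namely that the problem of solving word equations with \emph{standard} length constraints reduces to the problem of solving systems of equations in $M$. This is precisely the final conclusion of Theorem~\ref{t: main_thm_intro} under (C1), (C2) and (C3): for every such $M$ there is a free monoid $D$ of rank $n \geq 2$ such that the free monoid $D$ equipped with the standard length relation $\LengthRelation$ is interpretable in $M$ by systems of equations, whence word equations with length constraints over $D$ reduce to the Diophantine problem in $M$.

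Next I would unwind what such a reduction yields for (un)decidability. Given a decision procedure for the Diophantine problem in $M$, one decides an arbitrary instance of WELCs by translating it through the interpretation into an equivalent system of equations in $M$ and querying the procedure; this shows that decidability of the Diophantine problem in $M$ implies decidability of WELCs, and by contraposition that undecidability of WELCs forces undecidability of the Diophantine problem in $M$. This settles both implications of the first paragraph.

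For the ``in particular'' assertion, I would exhibit a single one-relator monoid with torsion $\langle A \mid w^n = 1\rangle$ ($n>1$) lying in the family satisfying (C1), (C2) and (C3); such a monoid is already provided by Example~\ref{ex:ThmA:examples}, for instance $\langle a,b,c \mid ((ab)(ac)(ab))^n = 1\rangle$ with $n>1$, whose set of minimal invertible pieces is $\{ab, ac\}$ and which plainly satisfies (C1)--(C3). Since the hypothesis ``the Diophantine problem is decidable for all torsion one-relator monoids'' in particular asserts decidability for this one monoid, the reduction just described yields decidability of WELCs.

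Since this is a corollary, there is no substantive analytic obstacle---all the real content resides in Theorem~\ref{t: main_thm_intro}. The only point demanding genuine care is to keep the direction of the reduction straight: a reduction of WELCs to the Diophantine problem in $M$ transfers decidability \emph{backwards} (from $M$ to WELCs) and undecidability \emph{forwards} (from WELCs to $M$), and must not be confused with a reduction running the other way.
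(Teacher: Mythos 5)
Your proposal is correct and follows exactly the route the paper intends: the corollary is stated as an immediate consequence of Theorem~\ref{t: main_thm_intro} (via the reduction of word equations with length constraints to the Diophantine problem in $M$, with the two implications being contrapositives), and the ``in particular'' clause is witnessed by the torsion examples of Example~\ref{ex:ThmA:examples} satisfying (C1)--(C3). Your care about the direction of the reduction is exactly the right point to emphasise, and nothing is missing.
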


In addition to the Diophantine problem, we also obtain results about the decidability of the first-order theory, and more precisely of the positive $AE$-theory, of some one-relator monoids. The first-order theory with coefficients of a free nonabelian semigroup was shown to be undecidable by Quine \cite{quine} (all free structures in this paragraph are implicitly assumed to be nonabelian). Quine's result was strengthened  in \cite{Durnev, marchenkov} by proving that the positive $AE$-theory with coefficients of a free semigroup is undecidable. This contrasts with the aforementioned decidability result of Makanin for systems of equations, and also with the fact that the first-order theory of free groups is decidable as part of the solution to Tarski problems \cite{Kha_Mia_tarsi}. A consequence of Theorem \ref{t: main_thm_intro} is the following 

\begin{theoremletter}[Theorem \ref{t: undec_AE_one_relator}]\label{t: last_thm_intro}
Let $M$ be a monoid with presentation $\langle A \mid w = 1\rangle$ for some set $A$ and some word $w\in A^*$ satisfying  the conditions (C1)  and (C2) of  Theorem \ref{t: main_thm_intro}. Then the positive $AE$-theory with coefficients of $M$ is undecidable. In particular, the first-order theory with coefficients of $M$ is undecidable.
\end{theoremletter}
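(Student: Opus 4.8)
The plan is to reduce the positive $AE$-theory with coefficients of a nonabelian free semigroup---shown undecidable by Durnev and Marchenkov \cite{Durnev, marchenkov}---to the positive $AE$-theory with coefficients of $M$, the vehicle being the interpretation produced by Theorem~\ref{t: main_thm_intro}.

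First I would invoke Theorem~\ref{t: main_thm_intro}: since $w$ satisfies (C1) and (C2), it supplies a free monoid $D$ of rank $n\geq 2$, a weight tuple $\vec{\lambda}$, and an interpretation of $(D,\cdot,1,=,\WeightedLengthRelation{\lambda})$ in $M$ by systems of equations. The length relation is irrelevant here, so I would discard it and keep the reduct $(D,\cdot,1,=)$, a free monoid of rank $n\geq 2$ still interpretable in $M$ by systems of equations. Next I would pass from this monoid to the free semigroup $D\setminus\{1\}$: over a free monoid the non-identity elements are cut out \emph{positively}, since $x\neq 1$ holds if and only if $\bigvee_{a}\exists y\,(x=ay)$, the disjunction running over the finitely many free generators $a$ of $D$ (used as coefficients). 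Hence $D\setminus\{1\}$ is interpretable in $D$ by positive existential formulas, and because such interpretations compose, the free semigroup $D\setminus\{1\}$ of rank $n\geq 2$ becomes interpretable in $M$ by positive existential formulas, its generators realised as positive-existentially definable coefficients of $M$.

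It then remains to invoke the transfer principle attached to this notion of interpretation: if a structure $N$ is interpretable in $M$ by positive existential formulas, then the positive $AE$-theory with coefficients of $N$ reduces to that of $M$. Applied to $N=D\setminus\{1\}$, together with the undecidability of the positive $AE$-theory of a nonabelian free semigroup, this yields undecidability of the positive $AE$-theory with coefficients of $M$. Since every positive $AE$-sentence is in particular a first-order sentence, the first-order theory with coefficients of $M$ is undecidable as well, which is the second assertion.

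The hard part will be the one delicate point inside the transfer principle: the translation must carry a positive $AE$-sentence of $N$ to a positive $AE$-sentence of $M$, not merely to some first-order sentence. The threat is the universal block, since relativising $\forall x$ to the interpretation's domain $\partial$ naively yields an implication $\partial(\bar X)\to(\cdots)$, whose quantifier-free normal form $\neg\partial(\bar X)\vee(\cdots)$ is not positive. I would circumvent this by exploiting the concrete shape of the Theorem~\ref{t: main_thm_intro} interpretation, exhibiting a positive-existentially definable parametrisation of the domain by a full Cartesian power of $M$ (equivalently, realising the interpretation on a full power of $M$ modulo a positive-existentially definable equivalence relation), so that the outer $\forall$ ranges over an unrestricted power of $M$ and introduces no negation. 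Checking that the interpretation admits such a parametrisation, and that the finitely many coefficients occurring in any given free-semigroup sentence are themselves positive-existentially definable in $M$, is where the real content lies; the remainder of the translation is mechanical.
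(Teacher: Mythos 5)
Your proposal is correct and follows essentially the same route as the paper: the paper's proof of Theorem \ref{t: undec_AE_one_relator} is a one-line citation of Theorem \ref{thm_main}, the Durnev--Marchenkov undecidability of the positive $AE$-theory with coefficients of free semigroups, and Proposition \ref{Diophantine_reduction} (reduction of positive $AE$-theories along e-interpretations). The two points you single out as delicate --- passing from the free monoid to the free semigroup, and keeping the relativised universal block positive --- are precisely what the paper delegates wholesale to that proposition (via Hodges' Reduction Theorem), so your more explicit treatment is a refinement of, not a departure from, the published argument.
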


The paper is organized as follows: in Section \ref{s: preliminaries_logic} we provide all the necessary background regarding equations, first-order theory, and tools for obtaining reductions of one algorithmic problem to another. Section \ref{sec:DP:for:one-relator} contains the main results of the paper. Section \ref{sec:DP:for:one-relator} finishes with a small subsection where we obtain results regarding the hyperbolicity of the group of units and of the Cayley graph of some one-relator monoids. Finally, in Section \ref{sec:final:section:applications:open:problems} we present examples and  applications of our results, and we provide a list of open questions.

\section{Preliminaries}

In this section we provide the necessary background definitions and results from model and semigroup theory 
that will be needed in this article. 
In Subsections \ref{s: preliminaries_logic} and \ref{s: interpretability} we shall state the model-theoretic definitions for general structures, although throughout the paper these will be used only on monoids, or on monoids with some extra function or relation such as a length relation.  Further background on model theory can be found in \cite{Hodges, Marker}. See \cite{Arora} for notions of computational and complexity theory,  \cite{Howie} for semigroup and monoid theory background, and \cite{lyndon_schupp} for notions in combinatorial group theory.

\subsection{Equations, first-order theory, and other problems} \label{s: preliminaries_logic}

We follow Sections 1.1. and 1.3 from \cite{Hodges}. 
We fix $X$ and $A$ to denote a finite set of variables and a finite set of constants, respectively.

We describe structures by tuples $S=(U, f_1, f_2, \dots, r_1, r_2, \dots, c_1, c_2, \dots)$, where $U$ is the domain of the structure, the $f_i$ are function symbols,  the $r_i$ are relation symbols, and the $c_i$ are constant symbols. The equality relation $=$ is always assumed to be one of the relations of $S$ and  is usually omitted from the list $r_1, \dots$   The tuple $(f_1, f_2, \dots, r_1, r_2, \dots, c_1, c_2, \dots)$ is  the language (or signature) of $S$. We make the convention that this tuple is implicitly enlarged with as many  elements from $U$ as needed. These extra elements are called \emph{coefficients} (or \emph{parameters}). Sometimes we identify the whole structure with its domain. For example, we denote the free monoid generated by $A$ simply by $A^*$, omitting any reference to the concatenation operation $\cdot$ or the identity element $1$ or the equality relation $=$. 

An \emph{equation} in a structure $S$ with language  $L$ is an atomic formula in the language $L$ with coefficients. Recall that an \emph{atomic formula} is one that makes no use of quantifiers, conjunctions, disjunctions, or negations. Thus an equation in $S$ is a formula constructed using only variables, constant elements from $U$ (because we allow the use of coefficients by convention), functions $f_i$, and a single relation $r_i$.  For example if $S$ is a monoid generated by $A$ then an equation in $S$ is a formal expression  of the form $w_1(X,A) = w_2(X,A)$, where $w_1(X,A)$ and $w_2(X,A)$ are words in $(A\cup X)^*$. A \emph{solution} to such equation is a map $f: X \to S$ such that $w_1(f(X),A) = w_2(f(X),A)$ is true in $S$.   By $w_i(f(X),A)$ we refer to the word obtained from $w_i$ after replacing each variable $x\in X$ by the word $f(x)$. A \emph{system of equations} in $S$ is a conjunction of equations in $S$.  Alternatively one can define equations as formulas of the form $\exists x_1 \dots \exists x_n \phi(x_1, \dots, x_n)$ where $\phi$ is an atomic formula as above on variables $x_1, \dots, x_n$, with the relation in $\phi$ being equality. We use  these two formulations interchangeably.

Equations in a free monoid $A^*$ receive the special name of \emph{word equations}. One can consider equations in more complicated structures, such as the structure $\FreeLength{A}$ obtained from the free monoid $A^*$ (which we identify with the tuple $(A^*, \cdot, 1, =)$) by adding the length relation $\LengthRelation$ defined by the rule $\LengthRelation(u,v)$ if and only if $|u| \leq |v|$, for all $u,v\in A^*$, where $|\cdot|$ denotes  length of words and $1$ is the identity element. A system of equations in $\FreeLength{A}$ is called  a system of \emph{word equations with length constraints}. This is a system of word equations $\Sigma$ together with a finite conjunction $\mc{C}$ of formal expressions of the form $\LengthRelation(w_1,w_2)$, each called a \emph{length constraint}, where $w_1, w_2 \in (X\cup A)^*$. A map $f: X \to A^*$ is a solution to such system if it is a solution to $\Sigma$  and $|w_1(f(X),A)| \leq |w_2(f(X), A)|$ for each length constraint $\LengthRelation(w_1, w_2)$ appearing in $\mc{C}$. 

Alternatively to the length constraint one can consider the more general notion of  \emph{weighted length constraint}, which we define now. Let  $\vec{k} = (k_a\mid a\in A)$ be a tuple of natural numbers, one for each constant $a\in A$. Then by $|\cdot|_{\vec{k}}$ we denote the map $|\cdot|_{\vec{k}}: A^* \to \mbb{N}$  defined by $$|h|_{\vec{k}} = \sum_{a\in A} k_s n_a(a),$$ where $n_a(h)$ is the number of times that the letter $s$ appears in $h$. We call $|\cdot|_{\vec{k}}$ the \emph{$\vec{k}$-weighted} length function of $A^*$. We further  let $\WeightedLengthRelation{k}$ denote the relation in $A^*$ defined by the rule $\WeightedLengthRelation{k}(h, g)$ if and only if $|h|_{\vec{k}} \leq |g|_{\vec{k}}$, and call $\WeightedLengthRelation{k}$ the \emph{$\vec{k}$-weighted length relation} in $A^*$.  Note  that if $\vec{k}$ consists solely of $1$'s then $|\cdot|_{\vec{k}}$ is the usual length of words $|\cdot|$ and $\WeightedLengthRelation{k}$ is the  length relation $\LengthRelation$.

The \emph{Diophantine problem} in a structure $S$, 
denoted $\mc{D}(S)$, refers to the algorithmic problem of determining if a given system of equations in $S$ (with coefficients belonging to a fixed computable set) 
has a solution.  One says that $\mc{D}(S)$ is \emph{decidable} if there exists an algorithm (i.e.\ a Turing machine \cite{Arora}) that performs such task. 

Given two algorithmic problems $P_1$ and $P_2$, we say that $P_1$ is  \emph{reducible} to $P_2$ if there exists an algorithm that solves $P_1$ using an oracle for the problem $P_2$ (i.e.\ a black-box algorithm that `magically' solves $P_2$ ---see Definition 3.4 in \cite{Arora}).  
Thus in this case if $P_1$ is unsolvable then so is $P_2$: indeed, if $P_2$ was solvable then replacing the oracle in the definition above by an algorithm that solves $P_2$ would yield an algorithm that solves $P_1$, a contradiction.  As an example, $\mc{D}(\mbb{Z})$ is undecidable for $\mbb{Z}$ the ring of integers (this is the answer to Hilbert's 10th problem \cite{matiasevich_H10}), and hence $\mc{D}(M)$ is undecidable for any structure $M$ such that $\mc{D}(\mbb{Z})$ is  reducible to $\mc{D}(M)$.

Let $L$ be some language. A \emph{positive $AE$-sentence} in $L$ is a first-order sentence of the form 
$$
\forall x_1 \dots \forall x_n \exists y_1 \dots \exists y_m \psi(x_1, \dots, x_n, y_1, \dots, y_m)
$$
where $\psi$ is a quantifier-free formula without negations on the language $L$. The positive $AE$-\emph{theory} of a structure $S$ is  the set of all positive $AE$-sentences in the language of $S$ that are true in $S$. Analogously to the Diophantine problem, the positive $AE$-theory of $S$ is said to be \emph{decidable} if there exists and algorithm that, given a positive $AE$-sentence, decides 
whether or not it holds in $S$. 

One can generalize the notions in the paragraph above by replacing positive $AE$-sentences by any family of first-order sentences $\Phi$. In particular, if $\Phi$ is the set of all first-order sentences then one speaks of the \emph{first-order theory}, or the \emph{elementary theory}, of a structure. It is important to note that if the first-order theory is decidable then so is the Diophantine problem, the positive $AE$-theory, the positive universal theory (identity checking), etc.

\subsection{Reductions and interpretability}\label{s: interpretability}

In this subsection we introduce the notion of interpretability with respect to some class of formulas. This is a powerful tool which, in particular, implies reducibility of the decision problem for such class of formulas. It is nothing else than the classical model-theoretical notion of interpretability \cite{Hodges, Marker}, with the modification that formulas are required to be of some specific form (such as systems of equations). We follow Section 1.3 of \cite{Marker} (alternatively, see Sections 2.1 and 5.3 of \cite{Hodges}). 
\begin{dfn} \label{d: definability}
Let $M$ be a structure, $n$ a natural number, and  $\Phi$ a set of formulas in the language of $M$. A subset $S\subset M^n$ is called \emph{definable} in $M$ \emph{by formulas in $\Phi$} (in short,  $\Phi$-definable) if there exists a formula $$\Sigma_S(x_1,\ldots,x_n, y_1, \dots, y_k) \in \Phi,$$
with free variables $(x_1, \dots, x_n, y_1, \dots, y_k)=(\vec{x}, \vec{y})$,
such that for any $\vec{m} \in M^n$, one has that  $\vec{m} \in S$ if and only if there exists $\vec{y}_0 \in M^k$ such that $\Sigma_S(\vec{m}, \mb{y}_0)$ is true in $M$. In this case $\Sigma_S$ is said to \emph{define} $S$ in $M$.
\end{dfn}

We will make use of the following two classes of formulas $\Phi$:
\begin{enumerate}
    \item Systems of equations. In this case we replace the prefix $\Phi-$ by e-, speaking of \emph{e-definability.}
    \item Disjunctions of systems of equations. In this case we speak of \emph{PE-definability.} See below for an explanation of this terminology.
\end{enumerate}

\begin{remark}\label{r: PE_and_disjunctions_of_systems}
 It is well known that any disjunction of systems of equations is equivalent to a  positive existential sentence with coefficients (hence the name $PE$-definability), i.e.\ formulas that can be constructed using only existential quantifiers, conjunctions, disjunctions, variables,   and coefficents from the structure. 
\end{remark}
For example, the set  of all elements that commute with a given element $m \in M$ is defined by the equation $xm=mx$. Likewise, the set of all elements of $M$ that are squares is defined by the equation $x = y^2$.  The set of all elements that commute with $m$ or are a square is defined by the PE-formula $(xm =mx) \vee (x = y^2)$.

Observe that, by definition, e-interpretability and PE-interpretability allow the use of any coefficients in the domain of the structures at hand.

\begin{dfn}\label{d: interpretability}
Let $\mathcal{A}$ and $\mc{M}$ be two structures and let $\Phi$ be a family of formulas in the language of $\mc{M}$. Let further $A$ and $M$ be the domains of $\mc{A}$ and of $\mc{M}$, respectively. Then $\mc{A}$ is called \emph{interpretable} in $\mathcal{M}$ by formulas $\Phi$ (in short, $\Phi$-interpretable) if there exists $n\in \mathbb{N}$, a subset $S \subseteq M^n$ and a bijective\footnote{The most general formulation of interpretability uses onto maps instead of bijective maps. Since only bijective maps appear in the interpretations of this paper, we have chosen to use this more restricted version of interpretability. This is similar to the approach followed in Section 1.3 of \cite{Marker}. For  the  definition  of interpretability with onto maps see Section 5.4 of \cite{Hodges} or Section 1.3 of \cite{Marker}.} map, called \emph{interpreting} map,
$
\phi: S \to A
$,
such that:
\begin{enumerate}
\item $S$ is $\Phi$-definable in $\mathcal{M}$. 
\item  For every function $f=f(x_1, \dots, x_n)$ in the language of $\mathcal{A}$, the preimage by $\phi$ of the graph of $f$, i.e.\ the set $\{(s_1, \dots, s_k, s_{k+1}) \in S^{k+1} \mid \phi(s_{k+1}) = f(\phi(s_1), \dots, \phi(s_k))\}\subseteq M^{n(k+1)}$, is $\Phi$-definable in $\mathcal{M}$. 
\item Similarly, for every relation $r$ of $\mathcal{A}$ (including the equality relation $=$), the preimage by $\phi$ of the graph of $r$ is $\Phi$-definable in $\mathcal{M}$.
\end{enumerate}
\end{dfn}

Similarly as before, if $\Phi$ consists of all systems of equations in the language of $\mc{M}$ then we speak of e-interpretability, and if $\Phi$ consists of all disjunctions of systems of equations we speak of  $PE$-interpretability.  Note that a $PE$-interpretation is, in particular, an e-interpretation.

The next two results are fundamental and they constitute the main reason we use interpretability in this paper.   These are standard results whose proofs follow immediately from the Reduction Theorem 5.3.2 in \cite{Hodges} and Remark 3 after it (alternatively, see Lemma 2.7 of \cite{GMO_rings}).

\begin{prop}[Interpretability is transitive]\label{interpretation_transitivity}
Interpretability is a transitive relation. That is, given three structures $M_1, M_2,$ and $M_3$, if $M_1$ is e- or $PE$-interpretable in $M_2$ and $M_2$ is e or $PE$-interpretable in $M_3$, then $M_1$ is e- or $PE$-interpretable in $M_3$, respectively.
\end{prop}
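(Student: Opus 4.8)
The plan is to build the composite interpretation explicitly and then reduce everything to one transfer lemma. Suppose $M_1$ is $\Phi$-interpretable in $M_2$ via a bijection $\phi_1 \colon S_1 \to M_1$ with $S_1 \subseteq M_2^{n_1}$, and $M_2$ is $\Phi$-interpretable in $M_3$ via $\phi_2 \colon S_2 \to M_2$ with $S_2 \subseteq M_3^{n_2}$, where $\Phi$ is the same class (systems of equations, or disjunctions thereof) in both cases. Since each coordinate of an element of $S_1 \subseteq M_2^{n_1}$ is itself coded by an $n_2$-tuple over $M_3$, I would set $n = n_1 n_2$ and define $S \subseteq M_3^{n}$ to consist of all $(\bar t_1, \dots, \bar t_{n_1})$ with each block $\bar t_i \in S_2$ and $(\phi_2(\bar t_1), \dots, \phi_2(\bar t_{n_1})) \in S_1$, together with the interpreting map $\phi(\bar t_1, \dots, \bar t_{n_1}) = \phi_1(\phi_2(\bar t_1), \dots, \phi_2(\bar t_{n_1}))$. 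As a composite of the bijection $\phi_1$ and the blockwise action of the bijection $\phi_2$, the map $\phi$ is a bijection from $S$ onto $M_1$, so it remains only to verify the three definability conditions of Definition~\ref{d: interpretability}.

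The crux is the following transfer lemma, which I would isolate first: \emph{if $X \subseteq M_2^m$ is $\Phi$-definable in $M_2$, then its blockwise $\phi_2$-preimage $\{(\bar t_1, \dots, \bar t_m) \in (S_2)^m : (\phi_2(\bar t_1), \dots, \phi_2(\bar t_m)) \in X\} \subseteq M_3^{mn_2}$ is $\Phi$-definable in $M_3$.} I would prove this by induction on a $\Phi$-formula $\Sigma_X$ defining $X$. To each $M_2$-variable I associate a fresh block of $n_2$ variables over $M_3$ constrained to lie in $S_2$ (which is $\Phi$-definable by hypothesis), representing a $\phi_2$-preimage; each constant of $M_2$ is named by a fixed $\phi_2$-preimage, which is permitted because interpretations allow arbitrary coefficients. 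An atomic equation $t_1 = t_2$ of $M_2$ is translated by unfolding the terms $t_1, t_2$ one function symbol at a time, introducing an existentially quantified block for each subterm and asserting the corresponding $\Phi$-definable graph-preimage of that function of $M_2$, and finally asserting the graph-preimage of the equality relation of $M_2$ on the top-level blocks. The logical connectives and the leading existentials of $\Sigma_X$ are carried through using the closure of $\Phi$ under conjunction and existential quantification (and, in the $PE$ case, disjunction, together with the distributivity of $\wedge$ over $\vee$).

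Granting the transfer lemma, the three conditions follow at once. Condition~(1) is the lemma applied with $m = n_1$ and $X = S_1$, since $S$ is exactly the blockwise $\phi_2$-preimage of $S_1$. For a function $f$ of $M_1$ of arity $k$, its $\phi$-preimage graph in $M_3^{n(k+1)}$ is the blockwise $\phi_2$-preimage of the set $\Gamma_f = \{(\bar s_1, \dots, \bar s_{k+1}) \in S_1^{k+1} : \phi_1(\bar s_{k+1}) = f(\phi_1(\bar s_1), \dots, \phi_1(\bar s_k))\} \subseteq M_2^{n_1(k+1)}$, which is $\Phi$-definable in $M_2$ because $M_1$ is $\Phi$-interpretable in $M_2$; applying the lemma with $m = n_1(k+1)$ gives Condition~(2), and the identical argument for relations (including equality) gives Condition~(3).

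I expect the main obstacle to be the transfer lemma, and within it the verification that the restricted classes $\Phi$ are genuinely closed under the operations used in the term-unfolding step. The delicate point is existential quantification: introducing a block of auxiliary variables for each subterm must keep us inside $\Phi$, which is exactly guaranteed by the two interchangeable formulations of equations recorded in Subsection~\ref{s: preliminaries_logic} (a system of equations prefixed by existential quantifiers is still e-definable), and, for $PE$-formulas, by Remark~\ref{r: PE_and_disjunctions_of_systems}. The same bookkeeping shows that in a mixed situation (one interpretation of type e and the other of type $PE$) the composite is $PE$-interpretable, which is consistent with the fact noted after Definition~\ref{d: interpretability} that every $PE$-interpretation is in particular an e-interpretation.
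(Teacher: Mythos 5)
Your proposal is correct. Note that the paper does not actually prove this proposition: it defers to the Reduction Theorem 5.3.2 of Hodges (and Lemma 2.7 of the cited GMO--rings paper), and what you have written is precisely the standard composition-of-interpretations argument underlying those references --- blockwise coding, a formula-transfer lemma proved by unnesting terms and replacing each function, relation, and equality symbol of $M_2$ by the $\Phi$-definable $\phi_2$-preimage of its graph --- correctly adapted to the restricted classes $\Phi$, with the right attention paid to the only delicate point, namely closure of systems (respectively disjunctions of systems) of equations under conjunction, existential quantification of the auxiliary blocks, and, in the $PE$ case, distribution of $\wedge$ over $\vee$.
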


\begin{prop}[Reduction of problems]\label{Diophantine_reduction}
Let $M_1$ and $M_2$ be two structures on languages $L_1$ and $L_2$, respectively. Assume $M_1$  is e-interpretable or PE-interpretable in $M_2$. Then the Diophantine problem in $M_1$ is reducible to the Diophantine problem in $M_2$. As a consequence, if the  second problem is decidable, then so is the first.

Similarly, the problem of deciding if any given first-order formula in the language $L_1$ holds in $M_1$ is reducible to the problem of deciding if any given formula in the language $L_2$ holds in $M_2$.  Consequently, if the first-order theory of $M_2$ is decidable then so is the first-order theory of $M_1$. The same statement holds when replacing first-order theory by positive $AE$-theory.
\end{prop}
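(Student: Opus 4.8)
The plan is to run the standard syntactic translation attached to an interpretation, specialised to the two classes $\Phi$ at hand: systems of equations (the e-case) and disjunctions of systems of equations (the PE-case). First I would fix the interpreting data from Definition \ref{d: interpretability}: the integer $n$, the domain $S\subseteq M_2^n$, the bijection $\phi\colon S\to M_1$, and the $\Phi$-formulas defining, respectively, the set $S$, the pullback of the graph of each function symbol of $L_1$, and the pullback of the graph of each relation symbol of $L_1$ (including equality). From these I would build a computable \emph{transform} $\theta\mapsto\theta^{*}$ from $L_1$-formulas to $L_2$-formulas: each $L_1$-variable $x$ is replaced by an $n$-tuple $\bar x=(x_1,\dots,x_n)$ of fresh $L_2$-variables meant to range over $S$, and each atomic subformula is replaced by the corresponding graph-defining $\Phi$-formula. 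The whole argument then rests on the equivalence $M_1\models\theta \iff M_2\models\theta^{*}$ (read on tuples via $\phi$), which is the content of the Reduction Theorem of \cite{Hodges}.

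For the Diophantine claim I would first transform a single equation $t(\bar x)=t'(\bar x)$ of $L_1$. Writing each side as a composition of function symbols of $L_1$, I would introduce fresh tuple-variables for the intermediate values, express each function application by its graph-defining $\Phi$-formula and the final identification of the two sides by the $\Phi$-formula pulling back equality, and conjoin these with the domain formula imposed on every variable-tuple. A system of equations thus transforms into a conjunction of such formulas. In the e-case each conjunct is a system of equations with some existentially quantified auxiliary variables, so the conjunction is again (equivalent to) a system of equations, and since the Diophantine problem only asks for \emph{existence} of a solution the auxiliary existentials cause no difficulty; in the PE-case the conjuncts are disjunctions of systems, and distributing $\wedge$ over $\vee$ turns $\theta^{*}$ into a finite disjunction of systems, each fed to the oracle separately. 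The equivalence ``$\theta$ is solvable in $M_1$ iff $\theta^{*}$ is solvable in $M_2$'' is then proved by transporting a solution across the bijection $\phi$ in both directions, using conditions (1)--(3) of Definition \ref{d: interpretability} to check that function applications and equalities are respected. As $\theta\mapsto\theta^{*}$ is computable, an oracle for $\mathcal D(M_2)$ decides $\mathcal D(M_1)$.

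For the first-order claim I would extend the transform by induction on formula structure: Boolean connectives pass through unchanged, atomic formulas are replaced as above, and quantifiers are relativised to $S$ by $(\exists x)\,\psi \mapsto (\exists\bar x)\,(\Sigma_S(\bar x)\wedge\psi^{*})$ and $(\forall x)\,\psi \mapsto (\forall\bar x)\,(\Sigma_S(\bar x)\to\psi^{*})$. A routine induction on $\theta$, again using that $\phi$ is a bijection and conditions (1)--(3), yields $M_1\models\theta \iff M_2\models\theta^{*}$ for every sentence $\theta$; since the transform is computable, this reduces the first-order theory of $M_1$ to that of $M_2$ and transfers decidability.

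The only genuinely delicate point, and the one I expect to be the main obstacle, is the positive $AE$-case, where I must check that the transform of a positive $AE$-sentence can again be taken positive $AE$, so that the positive $AE$-oracle for $M_2$ actually applies. The existential block is harmless, since relativisation there only inserts the positive conjunct $\Sigma_S$; the difficulty is the universal block, whose naive relativisation introduces the implication $\Sigma_S(\bar x)\to(\cdots)$ and hence a negation. I would remove this negation using the positive nature of the interpretation: in a PE-interpretation the domain $S$ is defined positively, and for the interpretations used here one can relativise the universal quantifiers without leaving the positive $AE$-fragment (for instance because membership in $S$, or its complement, is captured positively, equivalently because there is a definable surjection onto $S$ along which the universal quantifier can be pushed). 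Verifying that positivity is preserved precisely at the universal quantifiers---exactly the situation addressed by the remark following the Reduction Theorem in \cite{Hodges}---is where the argument needs care; the remaining bookkeeping is routine.
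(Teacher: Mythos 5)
Your translation argument for the Diophantine and full first-order parts is exactly what the paper intends: its entire proof of this Proposition is a citation of the Reduction Theorem 5.3.2 of \cite{Hodges} and the remark following it, which is precisely the computable transform $\theta\mapsto\theta^{*}$ you describe (tuples of fresh variables, graph-defining formulas substituted for atomic subformulas, quantifiers relativised to $\Sigma_S$, truth transported across the bijection $\phi$). For those two claims your write-up is correct and essentially identical in route to the paper's.

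The positive $AE$ claim is where you rightly sense the difficulty, but your proposed escape is a gesture rather than an argument. Relativising the universal block yields $\forall\bar x\,(\Sigma_S(\bar x)\to\cdots)$, and since in a PE-interpretation $\Sigma_S$ is positive \emph{existential}, even after pulling its witness variables to the front one is left with an implication whose antecedent is a conjunction of equations; this is not a positive $AE$ sentence, so the oracle for the positive $AE$ theory of $M_2$ does not obviously apply. Neither fix in your parenthetical closes this: ``membership in $S$ is captured positively'' is exactly the hypothesis that \emph{creates} the offending implication, not one that removes it, and ``a definable surjection onto $S$ along which the universal quantifier can be pushed'' is not part of the data of an e- or PE-interpretation (Definition \ref{d: interpretability}), so it would have to be exhibited --- by terms, so that substitution keeps the matrix positive --- for the specific interpretation in play; for the domain $[F]=\{x : ay=ya\ \wedge\ yx=1 \text{ for some } y\}$ used in Theorem \ref{thm_main} no such term-definable surjection is produced. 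To be fair, the paper itself defers this entirely to the citation, but a self-contained proof of the final sentence of the Proposition must settle this point, and as written your proposal leaves it open.
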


\subsection{Monoid presentations and rewriting systems}\label{s: presentations}

Let $A$ be a non-empty alphabet. 
In this paper we will use $\equiv$ to denote graphical equality, 
that is, for two words $w_1, w_2\in A^*$, the expression $w_1 \equiv w_2$ means $w_1$ and $w_2$ are equal as word in $A^*$. 
A \emph{rewriting system} $\rel$ over $A$ is a subset of $A^* \times A^*$. 
We call $\pres{A}{R}$ a \emph{monoid presentation}. 
This monoid presentation is said to be finite if both $A$ and $R$ are finite, and infinite otherwise. 
The elements of $R$ are called \emph{rewrite rules} of the rewriting system, 
and they are called the \emph{defining relations} of the presentation. 
A rewrite rule $(u,v) \in R$ is often written as $u=v$ when writing the presentation $\pres{A}{R}$.  
For $u,v \in A^*$ 
we write $u \rightarrow_\rel v$ if 
there are words $\alpha, \beta \in A^*$ and a rewrite rule $(l,r)$ in $\rel$ such that $u \equiv \alpha l \beta$ and $v \equiv \alpha r \beta$. 
Let $\rightarrow_\rel^*$ denote the reflexive transitive closure of $\rightarrow_\rel$, 
and let $\leftrightarrow_\rel^*$ denote the 
reflexive transitive symmetric closure of 
$\rightarrow_\rel$.
The 
monoid defined by the presentation $\lb A \mid \rel \rb$
is the set  
$A^* /  \leftrightarrow_\rel^*$ 
of equivalence classes of the equivalence relation  
$\leftrightarrow_\rel^*$ with multiplication defined by
$(w_1 / \leftrightarrow_\rel^*)\cdot(w_2 / \leftrightarrow_\rel^*) 
= w_1w_2 / \leftrightarrow_\rel^*$ 
for all $w_1, w_2 \in A^*$. 
When the set of rewrite rules is clear from context, we shall omit the subscript $\rel$ and simply write $\rightarrow$, $\rightarrow^*$ and $\leftrightarrow^*$.
A word $u$ is called \emph{reduced} if no rewrite rule can be applied to it, that is, there is no word $v$ with $u \rightarrow v$. 
A rewriting system $\rel$ is called \emph{Noetherian} if there is no infinite chain of words $u_i \in A^*$ with $u_i \rightarrow u_{i+1}$ for all $i \geq 1$. 
The rewriting system system is called \emph{confluent} if whenever $u \rightarrow^* u_1$ and $u \rightarrow^* u_2$ there is a word $v \in A^*$ such that $u_1 \rightarrow^* v$ and $u_2 \rightarrow^* v$. 
A \emph{complete rewriting system} is one that is both 
Noetherian and confluent.
If $\rel$ is a complete rewriting system then each $\leftrightarrow^*$-class contains a unique reduced word. 
It follows that if $R$ is a complete rewriting system over an alphabet $A$ then the set of reduced words of this system provides a set   
normal forms (that is, unique representatives) for the elements of the monoid $M$ defined by the presentation $\lb A \mid \rel \rb$.
In this situation we call $\lb A \mid \rel \rb$ a \emph{complete presentation 
defining the monoid $M$}. 
We say that a word is \emph{reduced with respect the complete presentation $\pres{A}{R}$} if it is a reduced word with respect to the complete rewriting system $R$.  
If in addition either $A$ or $R$ is infinite then this is called an \emph{infinite complete presentation}. %

\section{One-relator monoids}
\label{sec:DP:for:one-relator}

Our interest in this section is in the Diophantine problem for one-relator monoids with presentation $\pres{A}{w=1}$.  
Throughout this section $M$ will denote the one-relator monoid defined by the one-relator presentation $\pres{A}{w=1}$.  
We shall see how this problem relates to other known difficult decidability problems. Before exploring those links we first observe one  situation where the Diophantine problem is decidable. 

\begin{theorem} 
\label{thm:free:prod:application}
Let $M = \pres{A}{w=1}$ and suppose that every letter in $w$ is invertible in $M$. Let $G = \pres{B}{w=1}$ where $B \subseteq A$ is the set of letters that appear in $w$. Then $G$ is a one-relator group, and if the Diophantine problem is decidable in $G$ then it is decidable in $M$.  
  \end{theorem}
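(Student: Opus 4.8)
The plan is to reduce the theorem to two facts: first, that $G$ really is a one-relator group, and second, that $M$ decomposes as a free product $M \cong G * C^*$ of $G$ with the free monoid $C^* = (A\setminus B)^*$ on the letters not occurring in $w$. Once the decomposition is in place, I would deduce decidability of $\mc{D}(M)$ from that of $\mc{D}(G)$ by reducing the Diophantine problem of the free product to that of its factors, using that $\mc{D}(C^*)$ is already decidable.

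First I would show $G$ is a group. Set $C = A\setminus B$ and consider the monoid homomorphism $\phi\colon M \to \pres{B}{w=1}$ determined by $\phi(b)=b$ for $b\in B$ and $\phi(c)=1$ for $c\in C$. This is well defined because $w\in B^*$, so $\phi(w)=w=1$ holds in $\pres{B}{w=1}$. By hypothesis each $b\in B$ is invertible in $M$, so choosing $u\in M$ with $bu=ub=1$ and applying $\phi$ yields $b\,\phi(u)=\phi(u)\,b=1$; hence every generator of $\pres{B}{w=1}$ is invertible and this monoid is a group. A monoid presentation $\pres{B}{w=1}$ that happens to define a group is canonically isomorphic to the group with the same presentation, so $G$ is the one-relator group $\pres{B}{w=1}$ (for non-empty $w$ this is genuinely one-relator).

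Next I would establish the structural heart of the argument. Since the alphabets $B$ and $C$ are disjoint and $w\in B^*$, the presentation $\pres{A}{w=1}$ is the coproduct of $\pres{B}{w=1}$ and the free monoid presentation on $C$; that is, $M \cong G * C^*$, the free product of the group $G$ and the free monoid $C^*$. (One can also check directly that the group of units of $M$ is exactly $G$, consistent with Adjan's theorem that the group of units of a special one-relator monoid is a one-relator group \cite{Adjan1966}.) It then remains to reduce $\mc{D}(M)$ to $\mc{D}(G)$ via this decomposition, using that the Diophantine problem of $C^*$ is decidable by Makanin's theorem \cite{makanin2}, and remains decidable in the presence of rational constraints by the results on word equations with constraints \cite{EDT0L, Diekert2017a}.

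The main obstacle is precisely this last reduction. Given a system of equations over $M = G * C^*$, one must decide solvability by analysing how the values of the variables, written in free-product normal form as alternating products of $G$-syllables and $C^*$-syllables, cancel against one another and against the coefficients. I would either invoke a reduction result for the Diophantine problem of free products, or carry out a Makanin-type analysis directly: guess the bounded combinatorial data describing the cancellation pattern of a putative solution, and thereby split the system into finitely many systems whose equations live inside the individual factors, solving the $C^*$-part as word equations and the $G$-part via the assumed oracle for $\mc{D}(G)$. The delicate point is that the factor subsystems naturally acquire recognizable (unit- and coset-) constraints, so the procedure in fact reduces $\mc{D}(M)$ to the Diophantine problem of $C^*$ with rational constraints and of $G$ with recognizable constraints; the former is known, and one must argue that the constraints arising over $G$ can be absorbed into plain equations over $G$ (or that plain $\mc{D}(G)$ already suffices for them). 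Since $\mc{D}(C^*)$ is decidable, transitivity of reductions then gives $\mc{D}(M)\le \mc{D}(G)$, which is the desired conclusion.
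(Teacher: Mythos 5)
Your proposal follows essentially the same route as the paper: the paper's entire proof is the observation that $M \cong G \ast C^*$ with $C = A \setminus B$, followed by an appeal to Theorem~19 of \cite{Diekert}, which is precisely the ``reduction result for the Diophantine problem of free products'' you hoped to invoke (its hypotheses are a cancellativity condition satisfied by any group and any free monoid, plus decidability of the Diophantine problem in each factor). In particular, your worry about recognizable constraints accumulating on the factor subsystems is absorbed inside that cited theorem --- the paper takes the constraint class $\mc{C}_\sigma$ there to be just $\{U_\sigma, V_\sigma\}$, so only plain $\mc{D}(G)$ and $\mc{D}(C^*)$ are needed.
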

\begin{proof}
The monoid $M$ is isomorphic to the moniod free product $G \ast C^*$ where $C = A \setminus B$. Both $G$ and $C^*$ satisfy Assumption~17  from \cite{Diekert} (a cancellativity condition which satisfied by any group and any free monoid) and Assumption~18  from \cite{Diekert} (decidability of the Diophantine problem).  Hence applying   \cite[Theorem~19]{Diekert} (taking $\mc{C}_\sigma$ to be just $\{U_\sigma, V_\sigma\}$) we obtain that the Diophantine problem of $M$ is decidable. 
\end{proof}

\begin{example}\label{ex:Z} 
As an easy example of an application of the previous theorem, we see that the Diophantine problem is decidable in the monoid $M = \pres{a,b,c,d}{aba=1}$.
Indeed, the monoid $G = \langle a, b \mid aba = 1\rangle$ is  the infinite cyclic group. 
To see this, since $a(ba)=1$ it follows that $a$ is right invertible in the monoid with right inverse $ba$, and since $(ab)a=1$ it follows that $a$ is left invertible with left inverse $ab$. Hence $a$ is invertible in the monoid $M$ with inverse $ab=ba$ (because $ab=ab(aba)=(aba)ba=ba$). Letting $a^{-1}$ denote the inverse of $a$ in this monoid we have $b = a^{-2}$. Hence $G$ is the infinite cyclic group generated by $a$.
The argument above shows that each letter that appears in $aba$ is invertible in $M$, and the group $G = \langle a, b \mid aba = 1\rangle$  has decidable Diophantine problem (since all free groups do). Hence the hypotheses of 
Theorem~\ref{thm:free:prod:application} are satisfied and we conclude that the Diophantine problem is decidable in the monoid $M$.  
  \end{example}
Some more complicated examples to which  Theorem~\ref{thm:free:prod:application}  applies will be discussed in Section~\ref{sec:final:section:applications:open:problems}.
  
  \medskip
  
The following lemma will be key later when studying systems of equations in some one-relator monoids (Section \ref{sec:DP:for:one-relator}). 
A definition of weighted length relation can be found in Subsection \ref{s: preliminaries_logic}.

\begin{lemma}\label{l: key_general_lemma}
Let $M$ be a monoid, let $C = \langle c_0\rangle$ be an infinite one-generated submonoid of $M$, and let $D$ be a free rank-$n$ submonoid of $M$ freely generated by a set $\{d_1, \dots, d_n\}\subseteq M$. Assume that both monoids $C$ and $D$ are e-interpretable in $M$ with interpreting map the identity map.
Assume also that for each $i= 1, \dots, n$ there exists  $k_i \in \mbb{N}$ such that $c^{k_i}d_i = 1$. Then the free monoid with weighted length relation $(D, \cdot, 1, =, \WeightedLengthRelation{k})$ is e-interpretable in $M$, where $\vec{k}=(k_1, \dots, k_n)$, and $\cdot$ is the usual concatenation operation.
\end{lemma}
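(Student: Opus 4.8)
The plan is to take the identity map on $D$ as the interpreting map. Since the monoid $(D,\cdot,1,=)$ is already assumed to be e-interpretable in $M$ via the identity map, the subset $D\subseteq M$, the graph of multiplication of $D$, and equality are all e-definable in $M$. Thus, to e-interpret the enriched structure $(D,\cdot,1,=,\WeightedLengthRelation{k})$, the only remaining task is to show that the relation $\{(w,u)\in D^2 \mid |w|_{\vec{k}}\le |u|_{\vec{k}}\}$ is e-definable in $M$. The guiding idea is to record the weighted length $|w|_{\vec{k}}$ as an exponent of $c_0$, and then compare weighted lengths by comparing powers of $c_0$ inside the infinite one-generated submonoid $C$.

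First I would establish the telescoping identity
$$c_0^{\,|w|_{\vec{k}}}\, w \;=\; 1 \qquad\text{for every } w\in D,$$
by induction on the number of free generators in the unique factorization of $w$ over $\{d_1,\dots,d_n\}$. The base case $w=1$ is clear. For the inductive step I write $w = d_i w'$ and use the hypothesis $c_0^{k_i}d_i=1$ to get $c_0^{k_i}w=(c_0^{k_i}d_i)w' = w'$; since powers of $c_0$ commute, applying the inductive hypothesis to $w'$ gives
$$c_0^{\,|w|_{\vec{k}}}w \;=\; c_0^{\,|w'|_{\vec{k}}}\bigl(c_0^{k_i}w\bigr) \;=\; c_0^{\,|w'|_{\vec{k}}}w' \;=\; 1,$$
using $|w|_{\vec{k}}=k_i+|w'|_{\vec{k}}$. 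I would then record the uniqueness consequence: writing $N=|w|_{\vec{k}}$, the element $c_0^{N}$ is the \emph{only} member of $C$ that left-inverts $w$. Indeed, if also $c_0^a w=1$ with (say) $a\le N$, write $N=a+s$; then $c_0^{N}w=c_0^{s}(c_0^a w)=c_0^{s}$, whereas $c_0^{N}w=1$, so $c_0^{s}=1$ and hence $s=0$ because $C$ is infinite. Thus $a=N$ (the case $a\ge N$ being symmetric).

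Finally I would define the length relation by the system
$$\exists\, p,\,p',\,t\in C \colon\quad p w=1 \;\wedge\; p' u=1 \;\wedge\; p t=p'.$$
By the uniqueness above, any witnesses must satisfy $p=c_0^{|w|_{\vec{k}}}$ and $p'=c_0^{|u|_{\vec{k}}}$; writing $t=c_0^{s}$ with $s\ge 0$, the equation $pt=p'$ becomes $c_0^{|w|_{\vec{k}}+s}=c_0^{|u|_{\vec{k}}}$, and since $C$ is infinite cyclic this forces $|w|_{\vec{k}}+s=|u|_{\vec{k}}$, i.e.\ $|w|_{\vec{k}}\le |u|_{\vec{k}}$. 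Conversely, when $|w|_{\vec{k}}\le |u|_{\vec{k}}$ the elements $p=c_0^{|w|_{\vec{k}}}$, $p'=c_0^{|u|_{\vec{k}}}$ and $t=c_0^{\,|u|_{\vec{k}}-|w|_{\vec{k}}}$ witness the system. Because membership in $C$ and in $D$ is e-definable in $M$ and the three conditions are honest equations of $M$, this displayed formula is a system of equations (with existentially quantified auxiliary variables $p,p',t$); hence it e-defines $\{(w,u)\in D^2\mid |w|_{\vec{k}}\le |u|_{\vec{k}}\}$, which together with the given e-interpretation of $(D,\cdot,1,=)$ yields the e-interpretation of $(D,\cdot,1,=,\WeightedLengthRelation{k})$ in $M$.

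The hard part will be getting the telescoping identity and, above all, the uniqueness of the exponent right. The hypothesis $c_0^{k_i}d_i=1$ is a \emph{one-sided} relation, so one must peel generators off the left and view $c_0^{|w|_{\vec{k}}}$ as a left inverse of $w$; and it is precisely the infiniteness of $C$ that upgrades the weak data ``$p\in C$ and $pw=1$'' into the exact equality $p=c_0^{|w|_{\vec{k}}}$, which is what makes the single equation $pt=p'$ capture $\le$ rather than something weaker.
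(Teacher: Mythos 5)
Your proof is correct and follows essentially the same strategy as the paper's: encode the weighted length of $w\in D$ as the power of $c_0$ that left-inverts it, use the infiniteness of $C$ to make that encoding unambiguous, and then express the inequality by a small system of equations with auxiliary variables constrained to lie in $C$. The only cosmetic difference is in the final formula --- the paper writes $|d_1|_{\vec{k}}\le |d_2|_{\vec{k}}$ as ``$\exists x\in C$ with $xd_2=1$ and $xd_1\in C$'' (proving $cd\in C\iff t\ge |d|_{\vec{k}}$ by a case analysis), whereas you compare the two exact left inverses via the extra equation $pt=p'$ in $C$; both hinge on the same use of the hypothesis that $\langle c_0\rangle$ is infinite.
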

\begin{proof}
Since the free monoid $D$ is e-interpretable in $M$, it suffices to show that so is the relation $\WeightedLengthRelation{k}$. Let $\Sigma_{C}(x, \vec{y})$ and $\Sigma_D(z, \vec{w})$ be two systems of equations e-interpreting $C$ and $D$ in $M$, so that an element $h \in M$ belongs to $C$ (respectively $D$) if and only if $\Sigma_C(h, \vec{y})$ (resp.\ $\Sigma_D(h, \vec{w})$) has a solution $\vec{y}_0$ (resp.\ $\vec{w}_0$) in $M$. 
 Take arbitrary elements $c\in C$ and  $d\in D$. Then $c = c_0^t$ for some $t\in \mbb{N}$, and 
$d = d_{i_1} \dots d_{i_r}$ for some $d_{i_j}$. Now, 
\begin{equation}\label{e: main_thm_cases}
cd = 
\begin{cases*}
c_0^{t-|d|_{\vec{k}}} & if $t > |d|_{\vec{k}}$,\\
1 & if $t = |d|_{\vec{k}}$,\\
d_{i_{\ell+1}} \dots d_{\ell_r} & if $t< |d|_{\vec{k}}$, \text{ and } $cd \in D$,\\
c_0^s d_{i_{\ell+1}} \dots d_{\ell_r}  & if $t< |d|_{\vec{k}}$, \text{ and } $cd \notin D$,
\end{cases*}
\end{equation}
where in the last two cases $\ell$ is the minimum  number such that $|d_{i_1} \dots d_{i_{\ell+1}}|_{\vec{k}} > t$ (we have $\ell < r$), and in the last case $s$ is some number such that $0<s< k_{i_{\ell+1}}$.
It follows that if $t\geq |d|_{\vec{k}}$ then $cd \in C$. The other implication is true as well: if we had $cd\in C$ and $t < |d|_{\vec{k}}$ then $cd = c_0^s d_{i_{\ell+1}} \dots d_{\ell_r} = c_0^{r}$ for some $r,s\geq 0$ and some $0<\ell < r$. Let $d'=d_{i_{\ell+1}} \dots d_{\ell_r}$ and let $p= |d'|_{\vec{k}}$. Note that $s< p$. Then $1=c_0^p d' = c_0^{p-s} c_0^sd' = c_0^{p-s+r}$, contradicting the assumption that $\langle c_0 \rangle $ is infinite.

We have proved that $cd\in C$ if and only if  $|c|=t \geq |d|_{\vec{k}}$. Due to the e-definability of $C$, this in turn occurs if and only if  $ \Sigma_C(cd, \vec{y})$ has a solution $\vec{y}_0$. 
Moreover, the second case of \eqref{e: main_thm_cases} and the infiniteness of $\langle c_0\rangle$ indicate that  $t=|d|_{\vec{k}}$ if and only if $cd=1$.
Hence given two elements $d_1, d_2 \in D$ we have that $|d_1|_{\vec{k}} \leq |d_2|_{\vec{k}}$ if and only if there exists an element $c\in C$ such that $cd_2 = 1$ (this ensures $|c| = |d_2|_{\vec{k}}$) and $\Sigma_C(cd_1, \vec{y})$ has a solution $\vec{y}_0$ (this ensures $|d_1|_{\vec{k}} \leq |c|$). Overall, $|d_1|_{\vec{k}} \leq |d_2|_{\vec{k}}$ if and only if the following system of equations has a solution $x_0, \vec{y}_0, \vec{z}_0$:
\begin{equation}
\begin{cases} \Sigma_C(x,\vec{y}),\\ \Sigma_C(xd_1, \vec{z}),\\ xd_2 = 1 \end{cases}
\end{equation}
It follows that the $\vec{k}$-weighted length relation $\WeightedLengthRelation{k}$ is e-interpretable in $M$.
\end{proof}

\begin{example}\label{r: n_bicyclic} The above result can be applied to the monoid with presentation \begin{equation}\label{e: n_bicyclic}\langle a, b_1, \dots, b_n \mid ab_1 = 1, ab_2=1, \ldots,  ab_n=1 \rangle,\end{equation} for any $n> 1$, thus we recover the reduction from Example 21 in \cite{Diekert}. 
\end{example}

\medskip

Let $\Delta = \{\alpha_i \; (i \in I) \} \subseteq A^+$ be the set of minimal invertible pieces of the defining relator $w$. So the word $w$ uniquely decomposes as 
\[
w \equiv \alpha_{i_1} \alpha_{i_2} \ldots \alpha_{i_k} 
\]
where each $\alpha_{i_j} \in \Delta$, and each of these words is invertible in $M$ and has no proper non-empty 
prefix which is invertible in $M$. 
As mentioned in the introduction, we call the $\alpha_{i_j}$ the minimal invertible pieces of $w$. 
In \cite{Adjan1966} Adjan gives an algorithm for computing the minimal invertible pieces of the defining relator of a one-relator special monoid. In particular, every letter appearing in the relator represents an invertible element of the monoid if and only if all the minimal invertible pieces have size one, and this can be decided using Adjan's algorithm. Hence Adjan's algorithm can be used to test whether the hypothesis of Theorem \ref{thm:free:prod:application} above are satisfied.
This algorithm was discussed in Section~\ref{sec:intro}, see Example~\ref{ex:adjan:example} and the paragraph preceding it.  
As mentioned there, a good description of Adjan's algorithm can be found in \cite[Section~1]{lallement}. 

Since each piece $\alpha_i$ is minimal invertible, none of them is a prefix of another piece $\alpha_j$, and so $\Delta$ is a prefix code. Hence the submonoid of $A^*$ generated by $\Delta$ is free. We shall denote it $\Delta^*$. Let $B = \{b_i \mid i \in I \}$ be an alphabet in bijective correspondence with $\Delta$. Let $\phi: \Delta^* \rightarrow B^*$ be the unique homomorphism extending $\alpha_i \mapsto b_i$ for $i \in I$. It follows from Adjan's results \cite{Adjan1966} that the group of units $G$ of $M$ is isomorphic to the monoid defined by the monoid presentation  
\[
\pres{B}{\phi(w)=1} = \pres{B}{b_{i_1}b_{i_2}\ldots b_{i_k}=1}. 
\]
\begin{theorem}[\cite{Zhang1992}, Proposition~3.2]\label{thm:Zhang}
The infinite monoid presentation 
\begin{equation}\label{e: Zhang_presentation}
\pres{A}{
u=v: u, v \in \Delta^*, v <_{\mbox{\scriptsize{sh}}} u \ \ \& \ \
\phi(u) =_G \phi(v) 
}
\end{equation}
is an infinite complete presentation defining the monoid $M$.  
\end{theorem}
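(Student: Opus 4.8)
The plan is to establish the three ingredients that together yield completeness: that the rules of \eqref{e: Zhang_presentation} present $M$, that the associated rewriting system is Noetherian, and that it is confluent. That the presentation defines $M$ I would check directly. Each rule $u=v$ already holds in $M$: both $u,v$ lie in $\Delta^*$ and so represent units, and under the identification of the group of units $G$ with $\pres{B}{\phi(w)=1}$ recorded above (via $b_i\mapsto\alpha_i$), the map $\Delta^*\xrightarrow{\phi}B^*\to G$ satisfies $u=_M v$ if and only if $\phi(u)=_G\phi(v)$, which is exactly the condition imposed on the rule. Conversely $w\equiv\alpha_{i_1}\cdots\alpha_{i_k}\in\Delta^*$ satisfies $\phi(w)=_G 1$ and $1<_{\mbox{\scriptsize{sh}}}w$, so $w\to 1$ is itself one of the rules; hence the congruence generated by \eqref{e: Zhang_presentation} is both contained in and contains $=_M$, and therefore equals it.

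Noetherianity is immediate, since every rule strictly decreases the short-lex order $<_{\mbox{\scriptsize{sh}}}$, which is a translation-invariant well-order on $A^*$. By Newman's Lemma it then remains to prove local confluence, which for a Noetherian system reduces to joinability of the critical pairs arising from overlaps (proper overlaps and inclusions) of two left-hand sides. The crucial structural input is that $\Delta$ is a well-behaved code, which I would extract from the minimality of the pieces together with fact $(\dagger)$: no piece is a proper prefix of another (else that prefix would be an invertible proper prefix, contradicting minimality); no piece has an invertible proper nonempty suffix, whence no piece is a proper suffix of another; and no proper nonempty suffix of a piece equals a proper nonempty prefix of a piece (else $(\dagger)$ would make that common part an invertible proper prefix). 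Thus $\Delta$ is a bifix, overlap-free code.

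Using these code properties I would prove an alignment lemma: if a nonempty word $s$ is simultaneously a suffix of some $u_1\in\Delta^*$ and a prefix of some $u_2\in\Delta^*$, then $s\in\Delta^*$ and $s$ meets the piece-factorizations of both $u_1$ and $u_2$ at piece boundaries. The argument is a finite descent: were the left end of $s$ strictly inside a piece $\alpha$ of $u_1$, the corresponding proper suffix of $\alpha$ would have to match an initial run of pieces of $u_2$, and walking piece by piece against this suffix produces, at the first length mismatch, either a forbidden overlap or a forbidden invertible proper suffix. Granting the lemma, every critical overlap localizes entirely inside $\Delta^*$ and respects piece boundaries, so both one-step reducts of the overlap word are again $\Delta^*$-words representing the same element of $G$. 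Each such reduct resolves at once: if a $\Delta^*$-word $u$ is not already the short-lex least word $m$ in its $G$-class, then $m\in\Delta^*$, $\phi(m)=_G\phi(u)$ and $m<_{\mbox{\scriptsize{sh}}}u$, so $u\to m$ is a single rule, and $m$ is reduced because shortening any $\Delta^*$-subfactor would, by translation-invariance, contradict the minimality of $m$. Hence both reducts of any critical pair rewrite to the common least representative of their shared class, giving local confluence and, with Newman's Lemma, confluence.

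I expect the alignment lemma to be the main obstacle. The bifix and overlap-free properties fall out cleanly, but converting them into genuine control over how pieces can be nested and overlapped inside arbitrary $\Delta^*$-words---so that \emph{no} critical overlap can be misaligned with piece boundaries---is the delicate combinatorial heart of the proof; it is precisely here that the minimality of the invertible pieces, rather than merely their invertibility, must be used.
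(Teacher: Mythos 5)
The paper does not actually prove this statement: it is imported verbatim from \cite{Zhang1992} (Proposition~3.2), so there is no in-text argument to compare yours against. Judged on its own terms, your proposal gets the easy parts right --- the presentation \eqref{e: Zhang_presentation} defines $M$ because each rule holds in $M$ and $w\to 1$ is itself a rule; Noetherianity from shortlex; the bifix and overlap-free properties of $\Delta$ via $(\dagger)$; and your alignment lemma does correctly dispose of all \emph{proper} overlaps of left-hand sides (the descent terminates because the residual suffix of $\alpha_j$ strictly shrinks, and each terminal configuration is a forbidden overlap, a forbidden proper suffix in $\Delta$, or an invertible proper suffix of a piece).

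The gap is in the inclusion case of the critical-pair analysis. Your code properties say $\Delta$ is prefix-free, suffix-free and overlap-free, but they do \emph{not} say that no piece is a proper \emph{internal} factor of another piece --- that is exactly condition (C1), which the paper must impose as an additional hypothesis for its main theorems precisely because it can fail. For example, in $\pres{a,b}{aabbabaabb=1}$ the Adjan overlap algorithm gives $\Delta=\{aabb,\,ab\}$, and $ab$ sits strictly inside $aabb\equiv a(ab)b$. Consequently your claim that ``every critical overlap localizes entirely inside $\Delta^*$ and respects piece boundaries'' is unjustified for inclusions: running your own descent on a left-hand side $u'\in\Delta^+$ occurring as a factor of another left-hand side $u$ shows that either $u'$ is aligned with the piece boundaries of $u$ (fine), or $u'$ lies strictly inside a single piece, $\alpha_j\equiv x\,u'\,y$ with $x,y$ non-empty. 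In that second case the reduct obtained by applying $u'\to v'$ inside $u$ is $\cdots x\,v'\,y\cdots$, which is not a word in $\Delta^*$, so your resolution (rewrite both reducts to the shortlex-least $\Delta^*$-word of the common class) simply does not apply. To close the argument you must either prove that a left-hand side can never occur strictly inside a single minimal invertible piece --- which needs input beyond the bifix/overlap-free properties, e.g.\ Lemma~\ref{lem_Z} together with Freiheitssatz-type information about the group of units --- or resolve these inclusion pairs by some other means (Zhang avoids the issue by establishing uniqueness of irreducible forms directly from the structure theory of special monoids rather than by checking critical pairs). Since Theorem~\ref{thm:Zhang} is stated for all special one-relator monoids, without (C1), this case cannot be dismissed.
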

In the above theorem $\leq_{\mbox{\scriptsize{sh}}}$ denotes shortlex ordering, and $\phi(u) =_G \phi(v)$ means that $\phi(u)$ and $\phi(v)$ both represent the same element in the group of units $G$. For the rest of this section, when we say a word $w$ is \emph{reduced} we mean that it is reduced with respect to the above infinite complete presentation \eqref{e: Zhang_presentation}.  Our aim is to show that for a wide class of special one-relator monoids, if we could solve equations for those monoids then that would imply a solution to equation solving with length constraints in free monoids ---which is a longstanding open problem; see \cite{length_ctrts, length_ctrts_2, Buchi, ganesh, ganesh_18}. Of course, not every special one-relator monoid encodes equation solving with length constraints since, for instance, we have seen above that equations can be solved over the bicyclic  monoid. So we will need some conditions on the monoid. 
We give conditions in terms of certain combinatorial properties on the set of minimal invertible pieces $\Delta$. We suppose that the following conditions are satisfied: 
\begin{enumerate}
\item[(C1)] No word from $\Delta$ is a proper subword of any other word from $\Delta$. 
\item[(C2)] There exist distinct words $\gamma, \delta \in \Delta$ with a common initial letter $a \in A$.
\end{enumerate}

These conditions are easily satisfied and can be used to construct a wide variety of examples as we shall see in the next section. Note, for instance, if all the words from $\Delta$ have the same length, then condition (C1) will be satisfied. In particular there are one-relator monoids with torsion whose minimal invertible pieces satisfy these properties. A concrete example is given by the family monoids 
\[
\pres{a,b,c}{((ab)(ac)(ab))^k=1},  
\]
for $k > 1$ where, as we already proved in Example~\ref{ex:ThmA:examples} above, the set of minimal invertible pieces is $\{ab, ac\}$. This gives many examples of special one-relator monoids with hyperbolic undirected Cayley graphs which satisfy the conditions (C1)-(C2). Applications to examples like this will be discussed below. 

\begin{quote}
\emph{For the rest of this section let $M$ be the one-relator monoid defined by the monoid presentation 
\[
\pres{A}{r=1}
\]
where we suppose that conditions (C1)-(C2) are satisfied, and we let $a$ be a common initial letter of two distinct words from $\Delta$ }
\end{quote}

Throughout the rest of the section we denote the projection of a word $w\in A^*$ onto $M$ by $[w]$, so $[w]$ is the element of $M$ represented by the word $w$.

We now give a series of important technical lemmas. We begin with the following observation:
\begin{remark}\label{r: a_is_not_invertible}
The letter $a$ us not invertible in $M$. Indeed, there are two distinct words $\delta, \gamma \in \Delta$ having $a$ as their first letter. Since $\delta$ and $\gamma$ are subwords of $w$ which are invertible in $M$, and they minimal with this property, if $a$ was invertible then we would have $a=\gamma=\delta$. This would contradict the fact that $\gamma$ and $\delta$ are distinct words. Hence, $a$ cannot be invertible.
\end{remark}

\begin{lemma}\label{lem:C3_NEW} 
Suppose that (C1) and (C2) are both satisfied, and let $a$ be a common initial letter of two distinct words from $\Delta$. Then for every reduced word $w \in A^*$, and every positive integer $i >0$,  if $a^i w=1$ then $w$ has no prefix in $\Delta$.  
\end{lemma}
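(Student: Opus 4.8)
The plan is to argue by contradiction, using Zhang's complete rewriting system from Theorem~\ref{thm:Zhang} together with the overlap principle $(\dagger)$ from the introduction. Suppose $w$ is reduced, $a^i w = 1$ in $M$ with $i>0$, but nevertheless $w$ has a prefix in $\Delta$, say $w \equiv \alpha w'$ with $\alpha \in \Delta$. First I would record the elementary fact that \emph{no power $a^p$ (with $p\geq 1$) is invertible in $M$}: if $a^p$ had a two-sided inverse $g$, then $a(a^{p-1}g)=1$ and $(g a^{p-1})a=1$ would make $a$ both right- and left-invertible, hence invertible, contradicting Remark~\ref{r: a_is_not_invertible}. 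Since the presentation~\eqref{e: Zhang_presentation} is complete and $a^i w$ represents $1$, the word $a^i w$ rewrites to the empty word; as $i>0$ it is nonempty, so it is not reduced and therefore contains a factor $u \in \Delta^*$ which is the left-hand side of some rewrite rule. Because $\Delta$ is a prefix code, $u$ factors uniquely as $u \equiv \beta_1 \beta_2 \cdots \beta_t$ with each $\beta_j \in \Delta$.

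The key localization step is to observe that $u$ must consume at least one of the leading letters $a$. Indeed, if $u$ were a factor of the suffix $w$ (i.e.\ began at position $\geq i+1$), then $w$ would contain a reducible factor, contradicting that $w$ is reduced. Hence $u$ begins at some position $j\leq i$ inside the block $a^i$, so its first piece $\beta_1$ begins with the letter $a$. I would then split according to whether $\beta_1$ stays inside the $a$-block. If $\beta_1$ ends at or before position $i$, then $\beta_1 \equiv a^q$ for some $q\geq 1$, so $a^q \in \Delta$ is invertible, forcing $a$ to be invertible --- contradicting the fact recorded above. This case is therefore impossible.

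In the remaining case $\beta_1$ spills past position $i$, so $\beta_1 \equiv a^p x$ where $p\geq 1$ counts the leading $a$'s consumed and $x$ is a nonempty prefix of $w \equiv \alpha w'$. Comparing $x$ with $\alpha$ (both prefixes of $w$), condition (C1) rules out $|x|\geq|\alpha|$: otherwise $\alpha$ would occur as a proper subword of $\beta_1\in\Delta$, contradicting that no word of $\Delta$ is a proper subword of another. Thus $x$ is a proper prefix of $\alpha$, say $\alpha \equiv x z$ with $z$ nonempty. Now the two invertible words $\beta_1 \equiv a^p x$ and $\alpha \equiv x z$ overlap exactly in $x$, so applying the overlap principle $(\dagger)$ to $a^p\,x\,z$ yields that $a^p$, $x$ and $z$ are all invertible; in particular $a^p$ is invertible with $p\geq 1$, again contradicting that no power of $a$ is invertible. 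As both cases are impossible, $w$ can have no prefix in $\Delta$. The main obstacle, and the crux of the argument, will be the correct placement of the reducible factor together with the clean application of $(\dagger)$ to the overlap of $\beta_1$ and $\alpha$; conditions (C1) and (C2) (the latter only through Remark~\ref{r: a_is_not_invertible}) are precisely what make both cases collapse.
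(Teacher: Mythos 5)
Your argument is correct and follows essentially the same route as the paper's proof: both locate the left-hand side of a rewrite rule straddling the boundary between $a^i$ and $w$, write its first $\Delta$-piece as $a^p x$ with $x$ a non-empty prefix of $w$, and then compare $x$ with the assumed $\Delta$-prefix of $w$, deriving a contradiction with (C1) in one case and with the non-invertibility of $a$ (via the overlap principle $(\dagger)$, which the paper spells out as the left/right-invertibility argument) in the other. The only differences are cosmetic: you make explicit the easy sub-case where the first piece is a power of $a$, which the paper dismisses in passing.
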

\begin{proof}
Since $a^iw=1$ it follows that $a^iw$ is not reduced and since $w$ is assumed to be reduced it follows that we can write  
\[
a^iw \equiv a^j \alpha_1 \ldots \alpha_t w''
\]
where $0 \leq j < i$, $w''$ is a suffix of $w$,    $\alpha_1 \ldots \alpha_k$ is the left hand side of a rewrite rule from \eqref{e: Zhang_presentation}, and each $\alpha_i \in \Delta$. Since $a$ is not invertible and $w$ is reduced, we have $\alpha_1 \equiv a^k w'$ where $k=i-j>0$, and  $w'$ is a non-empty prefix of $w$. Suppose, seeking a contradiction, that $w \equiv \beta w_2$ with $\beta \in \Delta$.  Note that since $w’$ is a suffix of $\alpha_1$, where $\alpha_1$ is invertible, it follows that $w’$ is left invertible. Now, if $w'$ were a prefix of $\beta$ it would follow that $w’$ is also right invertible and hence invertible.  But then since $\alpha_1$ and $w’$ are both invertible it would follow that $a^{k}$ is invertible and hence $a$ is invertible,  which is a contradiction by Remark \ref{r: a_is_not_invertible}. Therefore we must have that $\beta$ is a prefix of $w'$, but then $\beta \in \Delta$ is a proper subword of $\alpha_1 \equiv a^kw' \in \Delta$, and this contradicts (C1). This completes the proof of the lemma. 
\end{proof}

\begin{lemma}[\cite{Zhang1992}, Lemma 3.1 and Lemma 3.6]\label{lem_Z}
If $u_1, u_2 \in \Delta^*$ then $[u_1] = [u_2]$ in $M$ if and only if $[\phi(u_1)] = [\phi(u_2)]$ in the one-relator group $G$. 
\end{lemma}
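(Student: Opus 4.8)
The plan is to read the lemma as a reformulation of Adjan's identification of the group of units, transported along the coding homomorphism $\phi$. Since $\Delta$ is a prefix code, $\phi\colon \Delta^* \to B^*$ is an isomorphism of free monoids; composing with the quotient $\pi\colon B^* \to \Gamma$, where $\Gamma = \pres{B}{\phi(w)=1}$, gives a homomorphism $\Delta^* \to \Gamma$. In the other direction, let $\eta\colon \Gamma \to M$ be induced by $b_i \mapsto [\alpha_i]$; this is well defined because $\eta(\phi(w)) = [w] = 1$ in $M$, and its image lies in the group of units $G$ since each $[\alpha_i]$ is invertible. Adjan's result is exactly that $\eta$ is an isomorphism of $\Gamma$ onto $G$. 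Now both $u \mapsto [u]$ and $u \mapsto \eta([\phi(u)])$ are homomorphisms $\Delta^* \to M$ agreeing on the free generators $\alpha_i$, so they coincide; thus for every $u \in \Delta^*$ we have $[u] = \eta([\phi(u)])$, and in particular $[u_1]$ and $[u_2]$ lie in the subgroup $G$, where equality coincides with equality in $M$.

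Given this identity both implications are formal. For the ``if'' direction, if $[\phi(u_1)] = [\phi(u_2)]$ in $G = \Gamma$ then applying the function $\eta$ yields $[u_1] = \eta([\phi(u_1)]) = \eta([\phi(u_2)]) = [u_2]$ in $M$; this uses only that $\eta$ is well defined, i.e.\ that $[w] = 1$. For the ``only if'' direction, if $[u_1] = [u_2]$ in $M$ then $\eta([\phi(u_1)]) = \eta([\phi(u_2)])$, and injectivity of $\eta$ lets us cancel it to obtain $[\phi(u_1)] = [\phi(u_2)]$ in $G$.

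The whole content therefore sits in the ``only if'' direction, which is precisely the injectivity of Adjan's map $\eta$---the substantive part of the structure theory of special monoids, which I am importing rather than reproving. A more self-contained route would instead argue through the complete rewriting system of Theorem~\ref{thm:Zhang}. There the ``if'' direction is immediate: if $u_1 \not\equiv u_2$ and $\phi(u_1) =_G \phi(u_2)$, the shortlex-larger of the two is the left-hand side of a defining rule and rewrites in one step to the other. The ``only if'' direction would reduce $u_1$ and $u_2$ to a common normal form by confluence, checking that rewriting a word of $\Delta^*$ keeps it in $\Delta^*$ and preserves the $G$-class of its $\phi$-image (the latter being immediate from $\phi$ being a homomorphism with $\phi(p) =_G \phi(q)$ on each rule $p \to q$). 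The main obstacle in this route is showing that every rewrite of a $\Delta^*$-word occurs at a position aligned with the $\Delta$-factorization: this does not follow from the prefix-code property alone and is exactly where the finer combinatorics of the minimal invertible pieces must be used.
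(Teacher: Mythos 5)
Your argument is correct, but it is worth being clear about what the paper actually does here: Lemma~\ref{lem_Z} is not proved in the paper at all, it is imported verbatim from Zhang (Lemmas~3.1 and~3.6 of \cite{Zhang1992}). What you have done instead is reduce the statement to a different imported fact, namely Adjan's theorem that the natural map $\eta:\pres{B}{\phi(w)=1}\to M$, $b_i\mapsto[\alpha_i]$, is an isomorphism onto the group of units $G$. Granting that, your derivation is clean and complete: $\phi$ is an isomorphism of free monoids because $\Delta$ is a prefix code, the identity $[u]=\eta([\phi(u)])$ holds since both sides are homomorphisms on $\Delta^*$ agreeing on the free generators, the ``if'' direction uses only well-definedness of $\eta$, and the ``only if'' direction is exactly injectivity of $\eta$. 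The one caveat is that the paper's text only asserts that $G$ \emph{is isomorphic to} the monoid $\pres{B}{\phi(w)=1}$, whereas you need the specific map $\eta$ to be that isomorphism; this stronger statement is indeed what Adjan and Zhang prove, but it is the entire content of the lemma, so your proof relocates rather than removes the appeal to the structure theory of special monoids --- which you acknowledge explicitly. Your alternative sketch via the complete rewriting system of Theorem~\ref{thm:Zhang} is also well judged: the ``if'' direction really is a one-step rewrite by the definition of the rules in \eqref{e: Zhang_presentation}, and you correctly isolate the genuine difficulty in the converse, namely that reductions applied to a word of $\Delta^*$ must respect the $\Delta$-factorization and keep the word in $\Delta^*$; that is precisely the content of Zhang's Lemma~3.1, and it does not follow from the prefix-code property alone. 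In short: correct, honestly scoped, and a more informative route than the paper's bare citation, at the cost of importing Adjan's isomorphism theorem in its precise (natural-map) form.
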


\begin{lemma}\label{lem_different}
Let $\delta$ and $\gamma$ be two distinct words in $\Delta$. Then $[\delta] \neq [\gamma]$ in $M$. 
\end{lemma}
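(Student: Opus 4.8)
The plan is to reduce the statement to a question about the one-relator group $G$ and then settle that question by abelianizing. First I would invoke Lemma~\ref{lem_Z}: since $\delta, \gamma \in \Delta \subseteq \Delta^*$, we have $[\delta] = [\gamma]$ in $M$ if and only if $[\phi(\delta)] = [\phi(\gamma)]$ in $G$. Writing $\phi(\delta) = b_i$ and $\phi(\gamma) = b_j$, and recalling that $\phi$ restricts to a bijection between $\Delta$ and $B$ (so distinct pieces correspond to distinct letters and hence $i \neq j$), it therefore suffices to show that the two \emph{distinct} generators $b_i, b_j$ represent distinct elements of the one-relator group $G = \Gpres{B}{\phi(w)=1}$.

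To do this I would pass to the abelianization $G^{\mathrm{ab}} = \mathbb{Z}^B / \mathbb{Z}\vec{v}$, where $\vec{v} = \sum_{b \in B} n_b \, e_b \in \mathbb{Z}^B$ is the exponent-sum vector of the relator, $n_b$ being the number of occurrences of $b$ in $\phi(w)$ and the $e_b$ the standard basis vectors. The crucial feature is that $\phi(w) \equiv b_{i_1}\ldots b_{i_k}$ is a \emph{positive} word, so every $n_b \geq 0$ and $\sum_{b \in B} n_b = k = |\phi(w)| \geq 1$. If $[b_i]$ and $[b_j]$ coincided in $G$, then they would also coincide in $G^{\mathrm{ab}}$, which would force $e_i - e_j = m\vec{v}$ for some $m \in \mathbb{Z}$.

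The contradiction then comes from a single grading argument: summing all coordinates, the left-hand side $e_i - e_j$ has coordinate-sum $0$ (as $i \neq j$), while the right-hand side $m\vec{v}$ has coordinate-sum $m \sum_{b \in B} n_b = mk$. Hence $mk = 0$, and since $k \geq 1$ this gives $m = 0$, whence $e_i - e_j = 0$, contradicting $i \neq j$. Thus $[b_i] \neq [b_j]$ in $G^{\mathrm{ab}}$, a fortiori $[b_i] \neq [b_j]$ in $G$, and by the reduction via Lemma~\ref{lem_Z} we conclude $[\delta] \neq [\gamma]$ in $M$, as required.

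I do not expect a genuine obstacle here; the one conceptual point worth flagging is that in an arbitrary one-relator group two distinct generators \emph{can} become equal, so the argument must exploit something special about $\phi(w)$ — and that special feature is exactly its positivity as a word in $B^*$, which makes the total-length grading of the relator nonzero and drives the abelianization computation. Note also that condition (C2) guarantees the existence of two distinct pieces $\delta, \gamma \in \Delta$, so the lemma is not vacuous, although the proof above applies verbatim to any pair of distinct minimal invertible pieces.
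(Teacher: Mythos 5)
Your proof is correct, but it takes a genuinely different route from the paper's. Both arguments begin identically, reducing via Lemma~\ref{lem_Z} to showing that the two distinct letters $\phi(\delta),\phi(\gamma)\in B$ represent distinct elements of the one-relator group $G$. From there the paper splits into cases on $|B|$: for $|B|\geq 3$ it invokes Magnus' Freiheitssatz (the subgroup generated by the two letters in question is free of rank two, since some third generator occurring in the relator is omitted), and for $|B|=2$ it argues that the collapse $c=d$ would force $G$ to have torsion, hence the relator to be a proper power, and then derives a contradiction from Newman's spelling theorem. Your argument replaces all of this with a single abelianization computation: since $\phi(w)$ is a \emph{positive} word, its exponent-sum vector $\vec{v}$ has coordinate sum $k\geq 1$, whereas $e_i-e_j$ has coordinate sum $0$ and is nonzero, so $e_i-e_j\notin\mathbb{Z}\vec{v}$ and the two generators already differ in $G^{\mathrm{ab}}$. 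This is more elementary (no one-relator group theory beyond the presentation of the abelianization), uniform in $|B|$, and makes explicit that positivity of the relator is the feature being exploited --- a point the paper's proof leaves implicit. The paper's approach does not buy anything extra for this particular lemma; it simply draws on machinery (Freiheitssatz, Newman's spelling theorem) that is already cited elsewhere in the text.
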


\begin{proof}
Since the words $\delta$ and $\gamma$ are distinct it follows that $\phi(\delta)$ and $\phi(\gamma)$ are distinct letters of $B$. 
This implies that $|B| \geq 2$.   
If $|B| \geq 3$ then it follows from Magnus' Freiheitssatz 
\cite[Theorem~5.1]{lyndon_schupp}
that $\phi(\delta)$ and $\phi(\gamma)$ represent distinct elements of the group $G$ and hence $[\delta] \neq [\gamma]$ in $M$, by the previous Lemma \ref{lem_Z}.  

Now suppose that $|B|=2$.  
Set $c = \phi(\delta)$ and $d=\phi(\gamma)$. 
If $c=d$ in $G$ then $cd^{-1}=1$ in $G$.    
Since $|B|=2$ and $c=d$  it follows that 
the group $G$ has torsion, and hence  
by \cite[Theorem~5.2]{lyndon_schupp}
the defining relator in the presentation of $G$ must be a proper power. Then it follows from Newman's spelling theorem 
\cite[Theorem~5.5]{lyndon_schupp}
that $cd^{-1}$ contains a subword of the defining relator (which uses no inverse of $c$ or $d$), or the inverse of such a subword, with length at least $2$.
This is clearly impossible and thus completes the proof. 
\end{proof}

\begin{lemma}[\cite{Zhang1992}, Lemma~3.3]\label{lem_Z2}
Let $u \in A^*$ be reduced. If $[u]$ is invertible then $u \in \Delta^*$.
\end{lemma}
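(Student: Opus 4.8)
The plan is to derive the statement from the uniqueness of reduced words provided by the complete presentation of Theorem~\ref{thm:Zhang}, once I have established two auxiliary facts: that every invertible element of $M$ has a representative in $\Delta^*$, and that the rewriting of \eqref{e: Zhang_presentation} never leaves $\Delta^*$.

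First I would show that \emph{every} invertible element of $M$ is represented by some word in $\Delta^*$. Recall that the group of units $G$ is isomorphic to $\pres{B}{\phi(w)=1}$ via $b_i \mapsto [\alpha_i]$, and that $\phi(w) \equiv b_{i_1}\cdots b_{i_k}$ is a \emph{positive} word in which every generator $b_i$ occurs, since every piece of $\Delta$ occurs in $w$. Cyclically permuting the trivial relator $\phi(w)=_G 1$ gives $b_{i_j}^{-1} =_G b_{i_{j+1}}\cdots b_{i_k}b_{i_1}\cdots b_{i_{j-1}}$, so each $b_i^{-1}$, and hence every element of $G$, is represented by a positive word in the $b_i$. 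Translating back through the isomorphism, every invertible element of $M$ equals $[p]$ for some $p \in \Delta^*$.

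Now take a reduced word $u$ with $[u]$ invertible, choose $p \in \Delta^*$ with $[p]=[u]$, and reduce $p$ using \eqref{e: Zhang_presentation}. The heart of the argument is to show this reduction stays inside $\Delta^*$. As both sides of every rewrite rule lie in $\Delta^*$, it is enough to verify that any occurrence of a left-hand side $\ell \in \Delta^*$ inside an intermediate word $p' \in \Delta^*$ is aligned with the $\Delta$-factorisation of $p'$. If it were not, the first piece $\alpha \in \Delta$ of $\ell$ would begin strictly inside some piece $\gamma \in \Delta$ of $p'$: either $\alpha$ ends inside $\gamma$, making it a proper subword of $\gamma$ and contradicting (C1); or $\alpha$ begins with a proper non-empty suffix $s$ of $\gamma$, and writing $\gamma \equiv x s$, $\alpha \equiv s y$ and applying ($\dagger$) to the invertible words $\gamma$ and $\alpha$, which overlap in $s$, shows that the proper non-empty prefix $x$ of $\gamma$ is invertible, contradicting the minimality of $\gamma$. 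Hence $\ell$ begins at a piece boundary, and as $\Delta$ is a prefix code it must then occupy a whole block of consecutive pieces of $p'$; rewriting replaces these by a word of $\Delta^*$, so the reduct again lies in $\Delta^*$.

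Since \eqref{e: Zhang_presentation} is Noetherian, reducing $p$ terminates at a reduced word $\bar p \in \Delta^*$ with $[\bar p]=[p]=[u]$. By completeness (Theorem~\ref{thm:Zhang}) reduced words uniquely represent their class, and since $u$ is reduced with $[u]=[\bar p]$ we get $u \equiv \bar p \in \Delta^*$, as required. I expect the alignment claim of the third paragraph to be the main obstacle: the rest is bookkeeping, but that is exactly where conditions (C1) and the overlap fact ($\dagger$) are indispensable, since for an arbitrary prefix code a rule could match across piece boundaries and the reduction could escape $\Delta^*$.
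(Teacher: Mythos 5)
Your proof is correct, but it is worth pointing out that the paper does not actually prove this lemma: it is imported verbatim as Lemma~3.3 of Zhang \cite{Zhang1992}, a general fact about finitely presented special monoids that holds with no hypotheses on $\Delta$. Your argument is therefore a genuinely different (self-contained) route, and a valid one in context, since every application of the lemma in this paper occurs under the standing assumptions (C1)--(C2). The two nontrivial ingredients you supply both check out: (a) surjectivity of $\Delta^*$ onto the group of units, which you correctly derive from the fact (itself an input from Adjan, stated in the paper) that $b_i \mapsto [\alpha_i]$ gives the isomorphism $G \cong \pres{B}{\phi(w)=1}$ together with the cyclic-permutation trick expressing each $b_i^{-1}$ as a positive word; and (b) the alignment claim, where your two cases are exhaustive and each is killed correctly --- a misaligned first piece $\alpha$ of $\ell$ either lies properly inside a piece $\gamma$ of $p'$ (excluded by (C1), which is genuinely needed here) or overhangs it, in which case $(\dagger)$ applied to $\gamma \equiv xs$ and $\alpha \equiv sy$ makes the proper non-empty prefix $x$ of $\gamma$ invertible, contradicting minimality; the prefix-code property of $\Delta$ then forces $\ell$ to occupy a whole block of pieces, so each rewrite step preserves $\Delta^*$ and completeness of \eqref{e: Zhang_presentation} finishes the argument. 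The trade-off is that Zhang's lemma is strictly more general (it does not assume (C1) or (C2)), whereas your proof buys self-containedness at the cost of only establishing the statement under the section's hypotheses; if you wanted to match the cited generality you would need a different argument for the alignment step, since without (C1) a piece can in principle sit properly inside another.
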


We are interested in right inverses of powers of the element $a$. These elements clearly form a submonoid of $M$. The following result shows that the set of reduced words representing elements in this submonoid themselves form a submonoid of the free monoid $A^*$.  

\begin{lemma}\label{lem:reduced}
 Let $i, j \in \mathbb{N}$. Let $u, v \in A^*$ be reduced words such that $a^i u=1$ and $a^j v=1$. Then $uv$ is a reduced word such that $a^{i+j} uv =1$.  
\end{lemma}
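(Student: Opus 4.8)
The plan is to prove the two assertions separately: the identity $a^{i+j}uv=1$ in $M$, and the claim that the word $uv$ is reduced with respect to the complete presentation \eqref{e: Zhang_presentation}. The identity is a one-line monoid computation: writing $a^{i+j}=a^{j}a^{i}$ and using associativity together with the hypotheses $a^{i}u=1$ and $a^{j}v=1$,
\[
a^{i+j}uv = a^{j}(a^{i}u)v = a^{j}\cdot 1\cdot v = a^{j}v = 1,
\]
so all the real work is in showing that $uv$ is reduced.

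For reducedness I would argue by contradiction, first disposing of the degenerate cases. If $i=0$ then $[u]=1$, and since the empty word is the normal form of the identity, the reduced word $u$ must be empty and $uv\equiv v$ is reduced; the case $j=0$ is symmetric. So assume $i,j\geq 1$, in which case Lemma \ref{lem:C3_NEW} applies and guarantees that neither $u$ nor $v$ has a prefix lying in $\Delta$. Now suppose $uv$ were not reduced. Then $uv$ contains, as a subword, the left-hand side $s$ of some rule of \eqref{e: Zhang_presentation}, and in particular $s\in\Delta^{*}$. Since $u$ and $v$ are individually reduced, this subword can lie neither entirely within $u$ nor entirely within $v$, so it must straddle the join: one may write $u\equiv X s_{1}$ and $v\equiv s_{2}Y$ with $s\equiv s_{1}s_{2}$, where $s_{1}$ is a nonempty suffix of $u$ and $s_{2}$ a nonempty prefix of $v$. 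Using that $\Delta$ is a prefix code, I would take the unique factorization $s\equiv\alpha_{1}\cdots\alpha_{m}$ with each $\alpha_{\ell}\in\Delta$ and locate the seam between $s_{1}$ and $s_{2}$ inside it.

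Two cases then arise. If the seam falls at a factorization boundary, then $s_{2}$ begins with a whole piece $\alpha_{\ell}\in\Delta$, so $v$ has a prefix in $\Delta$, contradicting Lemma \ref{lem:C3_NEW}. The hard part will be the other case, where the seam lies strictly inside a single piece $\alpha_{p}\equiv\beta\gamma$ with both $\beta,\gamma$ nonempty, so that $s_{1}\equiv\alpha_{1}\cdots\alpha_{p-1}\beta$ is a suffix of $u$ and $s_{2}\equiv\gamma\,\alpha_{p+1}\cdots\alpha_{m}$ is a prefix of $v$. Here $\beta$ is a proper nonempty prefix of the minimal invertible piece $\alpha_{p}$, so by minimality $[\beta]$ is not invertible; I would reach a contradiction by proving that $[\beta]$ is in fact invertible. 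The key observation is that $[s_{1}]$ is simultaneously left-invertible and right-invertible: it is left-invertible because $a^{i}u=a^{i}Xs_{1}=1$ exhibits $[a^{i}X]$ as a left inverse, and it is right-invertible because $s=s_{1}s_{2}\in\Delta^{*}$ is invertible, giving the right inverse $[s_{2}][s]^{-1}$. Hence $[s_{1}]$ is invertible, and since $[\alpha_{1}\cdots\alpha_{p-1}]$ is a product of invertible pieces and so invertible, it follows that $[\beta]=[\alpha_{1}\cdots\alpha_{p-1}]^{-1}[s_{1}]$ is invertible, the desired contradiction.

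I expect this interior-seam case to be the crux of the argument. The decisive idea is that a straddling left-hand side forces the fragment $s_{1}$ to inherit left-invertibility from the relation $a^{i}u=1$ and right-invertibility from its membership in $\Delta^{*}$; the invertibility of a proper prefix of a minimal invertible piece that this produces is precisely what the definition of $\Delta$ forbids. The boundary case, by contrast, is immediate from Lemma \ref{lem:C3_NEW}, and the degenerate cases $i=0$ or $j=0$ are handled by the uniqueness of normal forms.
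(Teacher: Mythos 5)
Your proof is correct and follows essentially the same route as the paper's: both reduce reducedness of $uv$ to the observation that a left-hand side straddling the seam would force a proper non-empty prefix of some piece of $\Delta$ to be simultaneously left-invertible (from $a^iu=1$) and right-invertible (from invertibility of words in $\Delta^*$), contradicting minimality. Your extra bookkeeping (the degenerate cases $i=0$ or $j=0$, and the explicit boundary-versus-interior seam analysis via Lemma \ref{lem:C3_NEW}) only spells out details the paper leaves implicit.
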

\begin{proof}
 We just need to prove that $uv$ is a reduced word.  By  Lemma~\ref{lem:C3_NEW} the word $v$ does not have any prefix in $\Delta$.  If $uv$ were reducible then it would follow that there is a non-empty suffix $u_1$ of $u$, and a non-empty prefix $v_1$ of $v$, such that $u_1 v_1 \in \Delta$.  But then $u_1$ is left invertible, since $u$ is left invertible, and right invertible, since $u_1 v_1$ is right invertible.  This contradicts $u_1 v_1 \in \Delta$.
\end{proof}

Let $F$ be the set of all reduced words $\beta$ such that $a^i \beta=1$ for some $i \in \mathbb{N}$ with $i>0$, together with the empty word $1$. 
\begin{remark}\label{r: clarification_remark_about_F}
Note that by definition all the words in $F\subseteq A^*$ are reduced words with respect to the complete presentation for $M$ defined in Theorem~\ref{thm:Zhang}.
It follows from Lemma~\ref{lem:reduced} that for any words $w_1$, $w_2$ from the set $F$ the concatenation of these two words $w_1w_2$ is again a word in the set $F$ and hence in particular $w_1w_2$ is again a 
reduced word  (since all the words in $F$ are reduced words). 
Therefore, $F$ is a submonoid of the free monoid $A^*$, and all of the words in $F$ are reduced words with respect to the complete presentation for $M$.
\end{remark}

We shall now  prove that $F$ is a free submonoid of $A^*$. 
For this it will be useful to recall some standard results about submonoids of free monoids. 
Recall from \cite[Subsection 1.2]{lothaire} that given a submonoid $P$ of $A^*$ there is a unique set $\mathcal{B}$ that generates $P$ and is minimal with respect to set-theoretic inclusion; it is the set
\[
(P\setminus\{1\}) \setminus (P\setminus\{1\})^2. 
\]

The following nice characterisation of free subsemigroups of free semigroups, from Lothaire, will be useful for us; see \cite[Proposition~1.2.3]{lothaire}. 

\begin{lemma}\label{lem_lothaire} 
A submonoid $P$ of $A^*$ is free if and only if for any word $w \in A^*$, one has $w \in P$ whenever there exist $p, q \in P$ such that 
\[
pw, wq \in P. 
\]  
\end{lemma}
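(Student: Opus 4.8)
The plan is to prove the two implications separately, using the description recalled just above the statement: $P$ is free precisely when its minimal generating set $\mathcal{B} = (P\setminus\{1\})\setminus(P\setminus\{1\})^2$ is a free basis, equivalently when $P$ admits a \emph{unique} factorisation over $\mathcal{B}$ (i.e.\ $\mathcal{B}$ is a code). The condition in the statement is a stability condition, and I would show it is equivalent to this uniqueness. So the whole proof reduces to translating between ``stability'' and ``unique factorisation over $\mathcal{B}$''.

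For the forward implication (free $\Rightarrow$ condition), suppose $P$ is free with basis $\mathcal{B}$, and let $w \in A^*$ together with $p, q \in P$ satisfy $pw, wq \in P$. The idea is to factor the \emph{single} word $pwq$ over $\mathcal{B}$ in two ways and compare. Writing $pwq = (pw)\cdot q$ and concatenating the $\mathcal{B}$-factorisations of $pw$ and of $q$ produces a factorisation of $pwq$ with a block boundary at position $|pw|$; writing $pwq = p\cdot(wq)$ likewise produces a factorisation with a boundary at position $|p|$. Since factorisation over $\mathcal{B}$ is unique, both boundaries must occur in one and the same factorisation $pwq = a_1 a_2 \cdots a_N$ with $a_i \in \mathcal{B}$. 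Because $|p| \leq |pw|$, there are indices $i \leq j$ with $a_1\cdots a_i = p$ and $a_1\cdots a_j = pw$, whence $w = a_{i+1}\cdots a_j \in \mathcal{B}^* = P$, as required.

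For the reverse implication (condition $\Rightarrow$ free), I would argue by contraposition against the code property. Assume $\mathcal{B}$ is not a code and pick a word $u$ of minimal length admitting two distinct $\mathcal{B}$-factorisations $u = b_1\cdots b_m = b'_1\cdots b'_n$. Left cancellation in the free monoid $A^*$ forces $b_1 \neq b'_1$ (cancelling a common first factor would give a strictly shorter word with two distinct factorisations), and since $b_1, b'_1$ are prefixes of the same word, one is a proper prefix of the other; say $b'_1 = b_1 w$ with $w \neq 1$. Then, taking $p = b_1 \in P$, we have $pw = b'_1 \in \mathcal{B} \subseteq P$; and cancelling $b_1$ from $u$ gives $w\,(b'_2\cdots b'_n) = b_2\cdots b_m \in P$, so $wq \in P$ with $q = b'_2\cdots b'_n \in P$. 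The stability condition now yields $w \in P$. But $w \neq 1$, so $b'_1 = b_1 w \in (P\setminus\{1\})^2$, contradicting $b'_1 \in \mathcal{B} = (P\setminus\{1\})\setminus(P\setminus\{1\})^2$. Hence $\mathcal{B}$ is a code and $P$ is free.

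The main obstacle is the boundary-matching step in the forward direction: one must justify that a factorisation boundary arising from one decomposition of $pwq$ genuinely appears in the factorisation produced by the other decomposition. This is exactly the point where freeness (uniqueness of the $\mathcal{B}$-factorisation) is used, and it is what makes the two decompositions compatible so that $w$ lies between two block boundaries. In the reverse direction the only delicate points are the choice of a shortest counterexample and the use of left cancellativity and prefix-comparability in $A^*$, both of which are routine properties of free monoids.
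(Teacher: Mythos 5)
Your proof is correct. Note that the paper gives no proof of this lemma at all: it is quoted directly from Lothaire (cited there as Proposition~1.2.3), so there is no internal argument to compare against. Your two directions --- for freeness $\Rightarrow$ stability, comparing the two $\mathcal{B}$-factorisations of $pwq$ and using uniqueness to conclude they coincide, hence share the boundaries at $|p|$ and $|pw|$; for stability $\Rightarrow$ freeness, taking a minimal-length word with two distinct $\mathcal{B}$-factorisations, extracting $b_1' = b_1 w$ with $w \neq 1$, and applying stability to force $w \in P$ and contradict $b_1' \in (P\setminus\{1\})\setminus(P\setminus\{1\})^2$ --- constitute exactly the classical proof of Sch\"utzenberger's stability criterion found in the cited reference, and both are sound, including the edge cases (empty $p$, $q$, or $w$; cancellation down to the empty word in the minimality argument).
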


\begin{lemma}\label{l: F_is_free}
  $F$ is a free submonoid of $A^*$. 
\end{lemma}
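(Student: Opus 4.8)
The plan is to verify the criterion of Lothaire recorded in Lemma~\ref{lem_lothaire}. By Remark~\ref{r: clarification_remark_about_F} we already know that $F$ is a submonoid of the free monoid $A^*$ all of whose elements are reduced words, so it remains only to check the following: for every $w \in A^*$, if there exist $p, q \in F$ with $pw, wq \in F$, then $w \in F$. Fix such $p, q, w$. Since $pw$ is reduced and $w$ is a factor of it, $w$ is itself reduced; the cases $p \equiv 1$ and $pw \equiv 1$ are trivial, so I would assume $p$ and $pw$ nonempty and pick $i_1, i_3 > 0$ with $a^{i_1} p = 1$ and $a^{i_3}(pw) = 1$ in $M$.

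The core computation works entirely inside $M$ with the element $x = [pw]$. From the two relations one reads off $[a^{i_3}] x = 1$ and, using $[a^{i_1}][p] = 1$, also $[a^{i_1}] x = [a^{i_1}][p][w] = [w]$. I would then split on the comparison of $i_1$ and $i_3$. When $i_1 \le i_3$ one cancels directly to get $[a^{i_3 - i_1}][w] = 1$, so either $w \equiv 1$ (if $i_1 = i_3$) or $a^{i_3 - i_1} w = 1$ with $i_3 - i_1 > 0$; since $w$ is reduced, either conclusion places $w$ in $F$.

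The remaining case $i_1 > i_3$ is where the real obstacle lies. Here cancellation instead yields $[w] = [a^{i_1}] x = [a^{i_1 - i_3}]$, and since both $w$ and $a^{i_1 - i_3}$ are reduced, uniqueness of normal forms forces $w \equiv a^{s}$ with $s = i_1 - i_3 > 0$. A priori such a positive power of $a$ need not lie in $F$, which is precisely the danger to be excluded. The key is to derive a contradiction with Remark~\ref{r: a_is_not_invertible}. Writing $y = [a^{i_3} p]$, the graphical identity $a^{i_1} p \equiv a^{s}(a^{i_3} p)$ gives $[a^{s}] y = 1$, so $y$ is a right inverse of $[a^{s}]$; on the other hand $pw \equiv p a^{s}$, so the hypothesis $a^{i_3}(pw) = 1$ reads $y [a^{s}] = 1$, exhibiting $y$ as a left inverse of $[a^{s}]$ as well. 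Hence $[a^{s}]$ would be invertible in $M$, forcing $a$ to be invertible and contradicting Remark~\ref{r: a_is_not_invertible}.

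Thus the case $i_1 > i_3$ cannot occur, and in every admissible case $w \in F$; Lemma~\ref{lem_lothaire} then gives that $F$ is free. I expect the main difficulty to be exactly the power-of-$a$ case above, whose resolution hinges on turning a two-sided cancellation into an invertibility statement and invoking the non-invertibility of $a$. I note in passing that this argument uses only the hypotheses $p, pw \in F$ — a slightly stronger left-unitary property than the symmetric condition demanded by Lothaire's criterion — so the role of $q$ is only to supply the hypotheses of the lemma.
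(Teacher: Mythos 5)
Your proof is correct and follows essentially the same route as the paper: both apply Lothaire's criterion (Lemma~\ref{lem_lothaire}), split into the cases $i=j$, $i<j$, $i>j$ for the exponents attached to $p$ and $pw$, and dispose of the case $i>j$ by showing $w\equiv a^s$ would force $a$ to be invertible, contradicting Remark~\ref{r: a_is_not_invertible}. Your version merely spells out the empty-word cases and the two-sided-inverse computation that the paper leaves implicit.
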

\begin{proof}
Suppose that $w\in A^*$ is such that there exist $p, q \in F$ such that $pw, wq\in F$.   By Lemma \ref{lem_lothaire}, we need to show that $w \in F$.  Since $w$ is a subword of a reduced word (for example, $pw$), it is reduced. By assumption there are $i,j \geq 1$ such that $a^i p=1$ and $a^j pw =1$.  If $i=j$ then $w=1$ and since $w$ is reduced it is the empty word and this belongs to $F$.  If $i<j$ then $a^{j-i}w=1$ and so $w \in F$.  Otherwise, if $i>j$ then it would follow that $w=a^k$ for some $k>0$.  But then $a^j p w=1$ implies $a^j p a^k=1$.  This last equality implies that $a$ is invertible, contradicting (C2) and the definition of $\Delta$.  In all cases $w \in F$ so this completes the proof of the lemma.  
\end{proof} 
 
\begin{lemma}\label{lem_longestInvertPrefix}
Let $w \in A^*$ be arbitrary. Write $w \equiv w_1 w_2$ where $w_1$ is the longest prefix of $w$ which is invertible. Suppose that $w'$ may be obtained from $w$ by a single application of a relation from the presentation. Write $w' \equiv w_1' w_2'$ where $w_1'$ is the longest invertible prefix of $w'$. Then $w_1 = w_1'$ in $M$. This implies that for any pair of words $u$, $v$, if $u=v$ in $M$ then the longest invertible prefix of $u$ is equal to $1$ in $M$  if and only if the longest invertible prefix of $v$ is equal to $1$ in $M$. \end{lemma}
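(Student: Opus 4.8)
The plan is to reduce the statement to a single structural fact about suffixes and then run a short case analysis on where the rewritten factor sits. First I would record the characterisation that a factorisation $w \equiv w_1 w_2$ is the longest-invertible-prefix factorisation if and only if $[w_1]$ is a unit of $M$ and the suffix $w_2$ has no non-empty prefix representing a unit. The nontrivial direction is immediate: if $s$ were a non-empty prefix of $w_2$ with $[s]$ a unit, then $w_1 s$ would be a strictly longer prefix with $[w_1 s] = [w_1][s]$ a product of units, hence invertible, contradicting maximality of $w_1$. This reformulation reduces the whole problem to controlling the ``no invertible prefix'' property of the suffix under one rewriting step.

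Next I would recall that a single step of the presentation \eqref{e: Zhang_presentation} replaces an occurrence of the left-hand side $u$ of a relation by its right-hand side $v$, where crucially $u,v \in \Delta^*$ and $[u]=[v]$ is a unit; write $w \equiv p\,u\,q$ and $w' \equiv p\,v\,q$. I would then split according to how the interval occupied by this occurrence meets the position $|w_1|$. If the occurrence ends at or before $|w_1|$ it lies inside $w_1$; writing $w_1 \equiv p\,u\,q_1$ we get $w' \equiv (p\,v\,q_1)\,w_2$ with $[p\,v\,q_1]=[w_1]$ a unit and the \emph{same} untouched suffix $w_2$, so the reformulation yields $w_1' \equiv p\,v\,q_1$ and $[w_1']=[w_1]$. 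The straddling configuration (starting inside $w_1$, ending inside $w_2$) is impossible: writing $w_1 \equiv p\,u_1$ and $u \equiv u_1 u_2$ with $u_2$ a non-empty prefix of $w_2$, the unit $[w_1]=[p][u_1]$ forces $[u_1]$ to be left-invertible and the unit $[u]=[u_1][u_2]$ forces $[u_1]$ to be right-invertible, hence $[u_1]$ is a unit and so is $[u_2]=[u_1]^{-1}[u]$, contradicting that $w_2$ has no non-empty invertible prefix. The configuration in which the occurrence begins exactly at $|w_1|$ is impossible for the same reason, since then $u$ itself is a non-empty invertible prefix of $w_2$.

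The substantive case is when the occurrence lies strictly inside the suffix, $w_2 \equiv p'\,u\,q$ with $p'$ non-empty; here I claim $w_1' \equiv w_1$, for which it suffices to show that the rewritten suffix $p'\,v\,q$ again has no non-empty invertible prefix. Suppose $s$ is such a prefix. If $|s| \le |p'|$ then $s$ is a prefix of $w_2$, which is impossible. If $s \equiv p'\,t$ with $t$ reaching past $v$, say $t \equiv v\,t'$ with $t'$ a prefix of $q$, then replacing $v$ by $u$ gives the non-empty invertible prefix $p'\,u\,t'$ of $w_2$, again impossible. The delicate sub-case, and the main obstacle, is when $t$ is a non-empty \emph{proper} prefix of $v$. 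Here I would use that $\Delta$ is a prefix code to factor $v \equiv \delta_1 \cdots \delta_m$ and write $t \equiv \delta_1 \cdots \delta_j\,\rho$ with $\rho$ a proper prefix of the piece $\delta_{j+1}$. If $\rho$ is empty then $[t]$ is a unit, forcing $[p']$ to be a unit, contradicting that $p'$ is a non-empty prefix of $w_2$. If $\rho$ is non-empty, then $[p'\,\delta_1\cdots\delta_j\,\rho]$ being a unit makes $[\rho]$ left-invertible, while $\delta_{j+1} \equiv \rho\,\sigma$ being invertible makes $[\rho]$ right-invertible, so $[\rho]$ is a unit; this contradicts minimality of the invertible piece $\delta_{j+1}$, which has no proper non-empty invertible prefix. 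This closes the substantive case and establishes $[w_1]=[w_1']$ in all cases.

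Finally, for the concluding assertion I would argue by connectedness: $u=v$ in $M$ means $u \leftrightarrow^{*} v$, so $u$ and $v$ are joined by a finite chain of single rewriting steps (used in either direction, which is harmless since the conclusion $[w_1]=[w_1']$ is symmetric). Applying the first part along the chain and using transitivity shows that the longest invertible prefixes of $u$ and $v$ represent the same element of $M$; in particular one is trivial in $M$ exactly when the other is.
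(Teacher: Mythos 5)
Your proof is correct and follows essentially the same route as the paper's: a case analysis on where the rewritten occurrence sits relative to the boundary of the longest invertible prefix, with the straddling configuration ruled out by the left-invertible-plus-right-invertible argument and the minimality of the pieces in $\Delta$. The only difference is that you work out in full the case where the occurrence lies strictly inside the suffix $w_2$ (via the $\Delta^*$-factorisation of $v$ and minimality of $\delta_{j+1}$), a case the paper's proof dismisses as immediate; that added detail is welcome but does not change the method.
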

\begin{proof}
We consider where the relation is applied to the word  $w \equiv w_1 w_2$. If the relation is applied within either $w_1$ or $w_2$ the result is immediate, so suppose otherwise. Let $\delta_1 \ldots \delta_m \in \Delta^*$ be the subword of $w$ to which the relation is being applied. If there is a non-empty suffix $u_1$ of $w_1$, and a non-empty prefix $u_2$  of $w_2$ such that $u_1u_2 \equiv \delta_r$ for some $r$, then since $u_1$ is left invertible since it is a suffix of $w_1$, and $u_1$ is right invertible since it is a prefix of $\delta_r$, it would follow that $u_1$ is invertible, which would contradict the fact that $\delta_r$ has no proper prefix which is invertible. So we must have $w_1 \equiv \alpha \delta_{1} \ldots \delta_r$, and $w_2 \equiv \delta_{r+1} \ldots \delta_m \beta$, but then $w_1 \delta_{1} \ldots \delta_m$ is a prefix of $w$ which is invertible and is longer than $w_1$, contradicting the definition of $w_1$.  
\end{proof}

Let $m \in \mathbb{N}$ be the maximum value $m$  such that there is a minimal invertible piece $\alpha \in \Delta$ such that $a^m$ is a prefix of $\alpha$.
We define a finite set of words $X$ in the following way.
For each $1 \leq j \leq m$ and for every piece $a^j \beta \in \Delta$ (where $\beta$ might begin with $a$) let $\eta$ be the reduced word representing the inverse of $a^j \beta$ and add the word $\beta \eta$ to the set $X$. 

\begin{lemma}\label{lem:Xwordsreduced}
Every word in the set $X$ is reduced.  
\end{lemma}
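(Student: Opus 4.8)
The plan is to show that each word $\beta\eta \in X$ is reduced by treating its two halves separately and then ruling out any rewrite that straddles the junction between $\beta$ and $\eta$. Recall that $a^j\beta \in \Delta$ is a minimal invertible piece with $j \geq 1$, and $\eta$ is the reduced word representing its inverse, so $\eta$ is reduced by construction and $[\eta]=[a^j\beta]^{-1}$ is invertible; hence by Lemma~\ref{lem_Z2} we have $\eta \in \Delta^*$, and since $\Delta$ is a prefix code, $\eta$ factors uniquely as $\eta \equiv \delta^\eta_1 \cdots \delta^\eta_s$ with each $\delta^\eta_i \in \Delta$. (If $\eta$ is empty then $\beta\eta \equiv \beta$ and the argument below for $\beta$ alone finishes the proof.)

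First I would show that $\beta$ is reduced. Since $j \geq 1$, the word $\beta$ is a proper suffix of the piece $a^j\beta \in \Delta$, so any word from $\Delta$ occurring as a factor of $\beta$ would be a proper subword of $a^j\beta$, contradicting (C1). Thus $\beta$ contains no factor of the form $\delta_1 \cdots \delta_t$ with $t \geq 1$ and $\delta_i \in \Delta$, and in particular no left-hand side of the rewriting system \eqref{e: Zhang_presentation}, so $\beta$ is reduced. Since $\eta$ is reduced as well, it follows that if $\beta\eta$ were not reduced, then some left-hand side $L \equiv \delta_1 \cdots \delta_t$ (with $t \geq 1$ and each $\delta_i \in \Delta$) of \eqref{e: Zhang_presentation} would occur as a factor of $\beta\eta$ straddling the boundary between $\beta$ and $\eta$, for it cannot lie wholly inside either half.

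The main step is to analyse this straddling factor. I would first observe that the boundary must fall strictly inside the first piece $\delta_1$: if it fell between two pieces, or inside a later piece $\delta_r$ with $r>1$, then $\delta_1$, a complete piece of $\Delta$, would lie wholly inside $\beta$, which we have just ruled out. Hence $\delta_1 \equiv \beta'' \eta_1$ with $\beta''$ a nonempty suffix of $\beta$ and $\eta_1$ a nonempty prefix of $\eta$. Now $\eta_1$ is a suffix of the invertible word $\delta_1$, hence left-invertible, and it is a prefix of the invertible word $\eta \in \Delta^*$, hence right-invertible; therefore $\eta_1$ is invertible. The proof then concludes by comparing $\eta_1$ with the first piece $\delta^\eta_1$ of $\eta$, both being prefixes of $\eta$ and hence comparable: if $\eta_1$ is a proper prefix of $\delta^\eta_1$ then it is a proper nonempty invertible prefix of a minimal invertible piece, which is impossible; otherwise $\delta^\eta_1$ is a prefix of $\eta_1$, and so $\delta^\eta_1$ is a subword of $\delta_1$ (being a prefix of $\eta_1$, which is a suffix of $\delta_1$), whence (C1) forces $\delta^\eta_1 \equiv \delta_1$; but then $\delta_1$ is both a prefix of $\eta_1$ and, via $\eta_1$, a suffix of itself, so $\eta_1 \equiv \delta_1$ and $\beta''$ is empty, a contradiction.

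I expect the junction analysis in the last paragraph to be the crux: the rest is bookkeeping with (C1) and the definition of minimal invertible piece, whereas the key idea is that any piece $\delta_1$ overlapping both halves forces its overlapping portion $\eta_1$ to be simultaneously left- and right-invertible, after which minimality of the pieces together with (C1) leaves no room for such an overlap. Note that the argument uses only that $a$ is not invertible (Remark~\ref{r: a_is_not_invertible}), conditions (C1)--(C2), and the prefix-code structure of $\Delta$; the identity $a^j(\beta\eta)=1$, although central to later uses of $X$, is not needed for this particular statement.
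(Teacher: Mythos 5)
Your proof is correct and follows essentially the same route as the paper's: locate a left-hand side $\lambda \in \Delta^+$ of a rewrite rule straddling the junction of $\beta$ and $\eta$, observe that its first piece must itself straddle, and show that the overlap of that piece with $\eta$ is both left- and right-invertible, which contradicts the minimality of the pieces together with (C1). The only cosmetic differences are that you spell out why $\beta$ itself is reduced (which the paper takes for granted) and that you close the argument by comparing the overlap with the first $\Delta$-piece of $\eta$, whereas the paper more directly notes that the $\beta$-side remnant of the straddling piece would then be a proper non-empty invertible prefix of a word in $\Delta$.
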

\begin{proof} 
Let $a^j \beta \in \Delta$ and let $\eta$ be a reduced word  representing the inverse of $a^j \beta$.  
We claim that $\beta \eta$ is a reduced word as a consequence of assumption (C1).  Indeed, suppose for a contradiction that $\beta \eta$ is not reduced. 
It follows from Lemma~\ref{lem_Z2} that $\eta \in \Delta^*$.  
Then there is a rewrite rule from \eqref{e: Zhang_presentation} which can be applied to the word $\beta \eta$. 
Let $\lambda$ be the left hand side of such a rule noting that $\lambda \in \Delta^+$.    
Since $\beta$ and $\eta$ are both reduced words we can write $\lambda \equiv \beta_2 \eta_1$ where $\beta_2$ and $\eta_1$ are both non-empty, with $\beta \equiv \beta_1 \beta_2$ and $\eta \equiv \eta_1 \eta_2$. 
Let $\alpha_1 \in \Delta$ be the prefix of $\lambda$ which belongs to $\Delta$.         
Let $\alpha_2 \in \Delta$ be the prefix of $\eta$ which belongs to $\Delta$.    
Since $\alpha_1$ cannot be a subword of $\beta$ since by (C1) it is not a subword of $a^j \beta \in \Delta$ it follows that $\alpha_1 \equiv \alpha_1' \alpha_1''$ where $\alpha_1''$ is a non-empty prefix of $\eta$. 
But since $\eta$ is invertible this would imply that $\alpha_1''$ is invertible and thus $\alpha_1'$ is invertible, contradicting the fact that $\alpha_1 \in \Delta$ is a minimal invertible piece. 
This is a contradiction, and we conclude that $\beta \eta$ is indeed a reduced word.            
\end{proof}

Thus $X$ is a finite set of reduced words, each of which is the right inverse of some $a^j$ with $1 \leq j \leq m$.
Note also that $X$ is a finite subset of the free monoid $F$.

\begin{lemma}\label{lem:fundamental:weight}
Let $i \in \mathbb{N}$ and $w \in A^*$ be a reduced word such that $a^iw=1$ in $M$. Then there is an integer $0 < j \leq i$, with $j \leq m$,  and a non-empty prefix $w_1$ of $w$ such that $w_1 \in X$ and $a^jw_1=1$ in $M$. Moreover, with the same value of $j$, there is a decomposition 
\[
a^iw \equiv a^ka^jw'w''
\]
where $k+j=i$, $w \equiv w'w''$ and $a^jw' \in \Delta$.
In particular, if no word in $\Delta$ begins with $a^2$ then 
$w$ can be written as $w \equiv w_1 w_2 \ldots w_i$ such that
$aw_l=1$ for all $1 \leq l \leq i$. 
\end{lemma}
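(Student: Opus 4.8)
The plan is to peel the front invertible piece off $a^iw$, turn it into an element of $X$, prove that element is genuinely a prefix of $w$, and then obtain the factorization statements by a short cancellation-and-induction argument. First I would dispose of the degenerate situation: for $i>0$ the word $w$ cannot be empty (otherwise $a^i=1$ would make $a$ invertible, contradicting Remark~\ref{r: a_is_not_invertible}), so $a^iw$ is a non-empty word representing $1$ in $M$ and hence is not reduced. Exactly as in the proof of Lemma~\ref{lem:C3_NEW}, I would apply a rewrite rule from \eqref{e: Zhang_presentation} to $a^iw$; since $w$ is reduced, the left-hand side $\lambda=\alpha_1\cdots\alpha_t\in\Delta^+$ must overlap the prefix $a^i$, so I can write $a^iw\equiv a^k\lambda w''$ with $0\le k<i$ and $w''$ a suffix of $w$. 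Setting $j:=i-k>0$ gives $\lambda\equiv a^jw'$ with $w\equiv w'w''$, and because no power $a^\ell$ lies in $\Delta$ (again Remark~\ref{r: a_is_not_invertible}) the first piece is $\alpha_1\equiv a^jw'$ with $w'$ a non-empty prefix of $w$; moreover $j\le m$ since $a^j$ is a prefix of $\alpha_1\in\Delta$. This is precisely the ``moreover'' decomposition $a^iw\equiv a^ka^jw'w''$ with $a^jw'\in\Delta$. Letting $\eta$ be the reduced inverse of $\alpha_1=a^jw'$, the word $w_1:=w'\eta$ lies in $X$ by definition, is reduced by Lemma~\ref{lem:Xwordsreduced}, and satisfies $a^jw_1=(a^jw')\eta=\alpha_1\eta=1$.

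The heart of the matter is the claim that $w_1=w'\eta$ is a prefix of $w$, equivalently that $\eta$ is a prefix of $r:=w''$, where $w\equiv w'r$. When $k=0$ this is immediate: then $\alpha_1r\equiv a^jw=a^iw=1$, so $[r]=[\alpha_1]^{-1}=[\eta]$, and since $r$ and $\eta$ are both reduced, uniqueness of reduced forms gives $r\equiv\eta$ and $w\equiv w_1$. For $k>0$ I would compare $w$ and $w_1$ via their longest common prefix and rule out the two ways they could fail to be prefix-comparable. The first bad case is that $r$ is a proper prefix of $\eta$, i.e.\ $w$ is a proper prefix of $w_1$, say $w_1\equiv w\eta'$ with $\eta'$ non-empty. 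Here I would set $g:=[a^jw]$ and observe that $a^iw=[a^k]g=1$ makes $[a^k]$ a left inverse of $g$, while $a^jw_1=g[\eta']=1$ makes $[\eta']$ a right inverse of $g$; hence $g$ is invertible with $g^{-1}=[a^k]=[\eta']$, so $[a^k]$ and therefore $a$ are invertible, contradicting Remark~\ref{r: a_is_not_invertible}. This leaves only the possibility that $r$ and $\eta$ diverge after a common prefix.

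The divergence case is what I expect to be the main obstacle, since for $k>0$ the words $w$ (a right inverse of $a^i$) and $w_1$ (a right inverse of $a^j$) are right inverses of genuinely different elements, so their sharing a prefix is a structural fact that does not follow from uniqueness of inverses alone. My plan is a combinatorial argument on reduced words exploiting that $\Delta$ is a prefix code together with the minimality of the invertible pieces and Lemma~\ref{lem:C3_NEW}. Concretely, writing $\eta=\beta_1\cdots\beta_s$ in its unique $\Delta$-factorization and tracking where the common prefix of $r$ and $\eta$ ends relative to the piece boundaries of $\eta$ and of the $\Delta$-factorization of $r$, a divergence should force either a piece of $\Delta$ to be a proper subword of another piece (impossible by the prefix-code property and condition (C1)) or a left-invertible proper prefix of a piece to be invertible (contradicting minimality of the pieces), in each instance completed by the same left/right-invertibility bookkeeping used in the previous paragraph together with confluence of \eqref{e: Zhang_presentation}. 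I anticipate this piece-boundary analysis to be the delicate, technical step.

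Once the claim is established, I would write $w\equiv w_1w_2$ and cancel: since $a^jw_1=1$, we get $[a^iw]=[a^{i-j}][a^jw_1][w_2]=[a^{i-j}w_2]$, so $a^{i-j}w_2=1$ with $w_2$ reduced (being a suffix of the reduced word $w$). Iterating (induction on $i$, with the base case $i=1$ already covered by the $k=0$ analysis, which gives $w\equiv w_1\in X$) then delivers the full factorization; Lemma~\ref{lem:reduced} and the freeness of $F$ from Lemma~\ref{l: F_is_free} guarantee the concatenations remain reduced at each stage. Finally, in the special case where no word of $\Delta$ begins with $a^2$ we have $m=1$, which forces $j=1$ at every step, so the iteration yields $w\equiv w_1w_2\cdots w_i$ with $aw_l=1$ for all $1\le l\le i$, as required.
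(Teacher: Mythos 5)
Your setup — extracting the rewrite-rule occurrence, obtaining the decomposition $a^iw \equiv a^k a^j w' w''$ with $a^jw'\in\Delta$ and $j\le m$, defining $w_1:=w'\eta\in X$, and disposing of the case $k=0$ by uniqueness of reduced forms — matches the paper, as does the concluding cancellation/induction. But the proof has a genuine gap exactly where you flag one: for $k>0$ you must show $w_1$ is a prefix of $w$, and of the two ways this could fail you only rule out one. The "divergence" case is left as a sketch ("a divergence should force\dots"), and the sketch as described is doubtful: you propose to compare the $\Delta$-factorization of $\eta$ with "the $\Delta$-factorization of $r$", but $r=w''$ is merely a suffix of the reduced right inverse $w$ of $a^i$ and need not lie in $\Delta^*$ at all (indeed Lemma~\ref{lem:C3_NEW} shows such right inverses have \emph{no} prefix in $\Delta$), so there is no piece-boundary structure on the $r$ side to track. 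Since this is the crux of the lemma, the argument is incomplete.

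The paper closes this case by a different and cleaner mechanism that your proposal never invokes: Lemma~\ref{lem_longestInvertPrefix}, which says that the image in $M$ of the longest invertible prefix is preserved under applications of the defining relations. Concretely, one sets $v\equiv\mathrm{red}(a^jw)$; then $a^kv=1$ with $k>0$, so by Lemma~\ref{lem:C3_NEW} $v$ has no prefix in $\Delta$ and hence (via Lemma~\ref{lem_Z2}) no non-trivial invertible prefix. By Lemma~\ref{lem_longestInvertPrefix} the longest invertible prefix $p$ of $a^jw$ therefore equals $1$ in $M$. Since $p$ contains the invertible prefix $a^jw'$, it has the form $p\equiv a^jw_1$ with $w_1$ a prefix of $w$ extending $w'$; then $a^jw_1=1$ in $M$ forces the reduced suffix of $w_1$ after $w'$ to be the reduced inverse $\eta$ of $a^jw'$, so $w_1\equiv w'\eta\in X$ and is automatically a prefix of $w$ — no case analysis on how $w''$ and $\eta$ might diverge is needed. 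If you want to complete your proof along your own lines, you would essentially have to reprove the content of Lemma~\ref{lem_longestInvertPrefix}; as written, the delicate step you defer is precisely the missing ingredient.
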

\begin{proof}
Let $i \in \mathbb{N}$ and $w \in A^*$ be a reduced word such that $a^iw=1$ in $M$. Since $a^iw$ is not reduced it follows 
that the left hand side $\lambda$ of one of the relations from 
\eqref{e: Zhang_presentation}
arises as a subword of $a^iw$. 
In particular $\lambda$ is a non-empty word with $\lambda \in \Delta^*$.    
Since $a$ is not invertible, no word from $\Delta$ is a subword of $a^i$, and since $w$ is reduced, $\lambda$ is not a subword of $w$. 
It follows that there is a prefix $\lambda'$ of $\lambda$ such that, $\lambda' \equiv a^jw' \in \Delta$ 
with $j>0$ and where $w'$ is a non-empty prefix of $w$.   
Thus we have the decomposition           
\[
a^iw \equiv a^ka^jw'w''
\]
where $k+j=i$, $w \equiv w'w''$ and $a^jw' \in \Delta$. 

If $k=0$ then $i=j$ and $a^iw \equiv a^jw = 1$.  So we can write $a^jw \equiv (a^jw')(w'')$ and since $(a^jw')(w'')=1$ it follows that in $M$ we have $w \equiv \beta \eta$ where  $\beta \equiv w'$, $\eta \equiv w''$, where $\eta$ is equal to the inverse of $a^j \beta$ in $M$ (note $a^j \beta$ is invertible because it belongs to $\Delta$).  Thus in this case the reduced word $w$ belongs to the set $X$, as required.

Now suppose that $k>0$. Consider the longest invertible prefix of the word $a^jw$. It is certainly non-empty since $a^jw'$ is invertible. Set $v \equiv \mathrm{red}(a^jw)$.  Then we have $a^k v =1$ with $k>0$ and $v$ a reduced word. It follows from Lemma~\ref{lem:C3_NEW} that $v$ cannot begin with a word from $\Delta$. 
Hence $v$ has no invertible prefix. Now by the last part of Lemma~\ref{lem_longestInvertPrefix}, since $v= a^j w$ in $M$, it follows that the longest invertible prefix $p$ of $a^jw$ is  equal to $1$ in $M$. So now we can write 
\[
a^iw \equiv a^ka^jw_1w_2
\]
where $k+j=i$, $w \equiv w_1w_2$ and $p\equiv a^jw_1=1$ in $M$, and $a^jw_1$ has prefix $a^jw' \in \Delta$. It then follows that in $M$ we have $w_1 = \beta \eta$ where $\beta \equiv w'$ and $\eta$ is equal to the inverse of $a^jw'$ in $M$. Also, $w_1$ is a reduced word because $w$ is reduced. It follows that $w_1 \in X$, as required. This completes the proof of the lemma.
\end{proof}

\begin{lemma}\label{lem:fin_gen_set_for_F}
$X$ is a finite generating set for the monoid $F$.  
\end{lemma}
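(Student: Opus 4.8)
The plan is to establish the two inclusions $X^* \subseteq F$ and $F \subseteq X^*$, where we write $X^*$ for the submonoid of $A^*$ generated by $X$ (consistent with the notation $\Delta^*$ used above). The first inclusion is immediate: by Lemma~\ref{lem:Xwordsreduced} every word in $X$ is reduced, and by construction each word of the form $\beta\eta\in X$, where $a^j\beta\in\Delta$ and $\eta$ represents the inverse of $a^j\beta$, satisfies $a^j(\beta\eta)=1$ in $M$ with $1\le j\le m$; hence $X\subseteq F$. Since $F$ is a submonoid of $A^*$ by Remark~\ref{r: clarification_remark_about_F}, it follows that $X^*\subseteq F$.

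For the reverse inclusion, I would prove by induction on $i$ the statement: \emph{every reduced word $w$ with $a^i w=1$ in $M$ lies in $X^*$}. Since every element of $F$ is either the empty word or such a $w$ for some $i>0$, this suffices. The base case $i=0$ forces $w$ to be the empty word, which is the empty product of elements of $X$. For the inductive step, let $i>0$ and let $w$ be a reduced word with $a^iw=1$. Note that $w$ is non-empty, since $a^i=1$ would force $a$ to be invertible, contradicting Remark~\ref{r: a_is_not_invertible}. By Lemma~\ref{lem:fundamental:weight} there exist an integer $0<j\le i$ and a non-empty prefix $w_1$ of $w$ with $w_1\in X$ and $a^jw_1=1$ in $M$.

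Writing $w\equiv w_1 w_2$, the suffix $w_2$ of the reduced word $w$ is itself reduced, and in $M$ we have
\[
a^{i-j}w_2 = a^{i-j}(a^jw_1)w_2 = a^i w_1 w_2 = a^i w = 1,
\]
using $a^jw_1=1$. Thus $w_2$ is a reduced word with $a^{i-j}w_2=1$ and $i-j<i$, so by the induction hypothesis $w_2\in X^*$; consequently $w\equiv w_1 w_2\in X^*$. This completes the induction and yields $F\subseteq X^*$, hence $F=X^*$.

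The substantive work is entirely carried out by Lemma~\ref{lem:fundamental:weight}, which supplies the prefix lying in $X$; the remaining steps are routine checks (reducedness passes to suffixes, and the displayed identity uses only the relation $a^jw_1=1$). The one point deserving attention is the choice of induction variable: I would induct on the exponent $i$ rather than on word length, since Lemma~\ref{lem:fundamental:weight} guarantees a strict drop $j>0$ in the exponent of $a$ but does not by itself bound the length of the peeled-off prefix $w_1$. As $i-j<i$, the induction is well-founded and terminates, so I do not anticipate a genuine obstacle here beyond being careful about which quantity decreases.
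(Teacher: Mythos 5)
Your proof is correct and follows essentially the same route as the paper: the paper's argument is exactly to invoke Lemma~\ref{lem:fundamental:weight} to peel off a prefix $w_1 \in X$ and then conclude ``by induction,'' and your write-up simply makes that induction (on the exponent $i$, which is indeed the right quantity to decrease) and the easy containment $X^* \subseteq F$ explicit.
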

\begin{proof}
Let $i \in \mathbb{N}$ and $w \in A^*$ be a reduced word such that $a^iw=1$ in $M$.
It follows from Lemma~\ref{lem:fundamental:weight} that   there is an integer $0 < j \leq i$, with $j \leq m$,  and a non-empty prefix $w_1$ of $w$ such that $w_1 \in X$ and $a^jw_1=1$ in $M$.
 The lemma now follows by induction.
\end{proof}

Let $\mathcal{B}$ be the unique subset of $F$ that generates $F$ and is minimal with respect to set-theoretic inclusion, that is $\mathcal{B}$ is equal to the set 
\[
(F \setminus \{1\}) \setminus (F \setminus \{1\})^2.
\]

Since $X \subseteq F$ is a finite generating set for $F$ it follows that $\mathcal{B} \subseteq X$.    

\begin{lemma}\label{lem:basis:size}
The basis $\mathcal{B}$ has size at least two. Thus the submonoid $F$ of $A^*$ is a free monoid of rank at least two.   
\end{lemma}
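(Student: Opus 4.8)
The plan is to produce two elements of $F$ which are incomparable under the prefix order on $A^*$, and then to invoke the elementary fact that a free monoid of rank one is totally ordered by the prefix relation. By condition (C2) fix two distinct words $\gamma, \delta \in \Delta$ sharing the common initial letter $a$. Since $a$ is not invertible (Remark~\ref{r: a_is_not_invertible}) while every element of $\Delta$ is invertible, neither $\gamma$ nor $\delta$ can equal $a$; hence we may write $\gamma \equiv a\beta_\gamma$ and $\delta \equiv a\beta_\delta$ with both $\beta_\gamma$ and $\beta_\delta$ non-empty.

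Next I would show that $\beta_\gamma$ and $\beta_\delta$ are prefix-incomparable. They are distinct because $\gamma \not\equiv \delta$, and if one were a prefix of the other then the corresponding word $\gamma \equiv a\beta_\gamma$ would be a proper prefix --- hence a proper subword --- of $\delta \equiv a\beta_\delta$ (or vice versa), contradicting (C1). Therefore $\beta_\gamma$ and $\beta_\delta$ first differ at some position $i$ lying within both words. Taking $j=1$ in the construction of $X$, which is permitted since $a\beta_\gamma \equiv \gamma \in \Delta$ and $a\beta_\delta \equiv \delta \in \Delta$, we obtain words $x_\gamma \equiv \beta_\gamma\eta_\gamma$ and $x_\delta \equiv \beta_\delta\eta_\delta$ lying in $X \subseteq F$, where $\eta_\gamma, \eta_\delta$ are reduced words representing the inverses of $\gamma, \delta$. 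As $\beta_\gamma$ is a prefix of $x_\gamma$ and $\beta_\delta$ is a prefix of $x_\delta$, the words $x_\gamma$ and $x_\delta$ also differ at position $i$, so neither is a prefix of the other.

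Finally I would conclude by contradiction. If $F$ had rank one, then $F = \{x^n \mid n \geq 0\}$ for a single word $x \in A^*$, and any two elements, being concatenation-powers of $x$, would be prefix-comparable; this contradicts the incomparability of $x_\gamma$ and $x_\delta$. Since $F$ is free by Lemma~\ref{l: F_is_free} and contains the non-empty word $x_\gamma \neq 1$, it is a non-trivial free monoid that is not of rank one, so its basis $\mathcal{B}$ has size at least two, as claimed. The main point requiring care is the prefix analysis: verifying that the difference between $\gamma$ and $\delta$ already occurs inside the initial segments $\beta_\gamma, \beta_\delta$, so that it survives into $x_\gamma, x_\delta$, and checking that these two words genuinely arise as members of $X$ via the $j=1$ instance of the construction. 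Everything else is a direct application of (C1), (C2), Remark~\ref{r: a_is_not_invertible}, and Lemma~\ref{l: F_is_free}.
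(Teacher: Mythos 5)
Your proof is correct and follows essentially the same route as the paper: both arguments take the two pieces $\gamma \equiv a\beta_\gamma$, $\delta \equiv a\beta_\delta$ supplied by (C2), pass to the corresponding elements $\beta_\gamma\eta_\gamma, \beta_\delta\eta_\delta$ of $X$, and derive from (C1) that these cannot both be powers of a common word (the paper phrases this as ``powers of $\nu$ would force $\gamma'$ to be a proper prefix of $\delta'$,'' you phrase it as prefix-incomparability surviving from $\beta_\gamma,\beta_\delta$ into $x_\gamma,x_\delta$). The only cosmetic difference is that the paper also invokes Lemma~\ref{lem_different}, which your argument correctly shows is not needed for this step.
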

\begin{proof}
By assumption (C2) there are distinct words $\gamma, \delta \in \Delta$ with common initial letter $a \in A$. 
Write $\gamma \equiv a \gamma'$ and $\delta \equiv a \delta'$. 
Note that either $\gamma'$ or $\delta'$ can begin with the letter $a$.   
By Lemma~\ref{lem_different} the words $\gamma$ and $\delta$ represent different elements of the monoid $M$.  
This in turn implies that $[\gamma'] \neq [\delta']$. 
Let $(a\gamma')^{-1}$ be a reduced word representing the inverse of $a\gamma'$ in $M$, and let $(a\delta')^{-1}$ be a reduced word representing the inverse of $a\delta'$ in $M$. 
In particular $(a\gamma')^{-1},  (a\delta')^{-1} \in \Delta^*$. 
Then by definition we have $\gamma_2 \equiv \gamma' (a \gamma')^{-1} \in X$ and  $\delta_2 \equiv \delta' (a \delta')^{-1} \in X$, and both of these words are reduced words.           
Suppose, seeking a contradiction, that $F$ is a free monoid of rank $1$.    
It follows that there is a word $\nu \in A^+$ such that each of $\gamma_2$ and $\delta_2$ is, in $A^+$, equal to some power of the word $\nu$. 
But this would imply that $\gamma'$ is a prefix of $\delta'$, or vice versa. 
Suppose without loss of generality $\gamma'$ is a proper prefix of $\delta'$.    
Then $a\gamma'$ is a proper prefix of $a\delta'$. 
But this contradicts condition (C1) since both of these words belong to $\Delta$.    
This completes the proof of the lemma. 
\end{proof}

The free submonoid $F$ of the free monoid $A^*$ defined above may also naturally be viewed as a free submonoid of the monoid $M$. 
This is because, as explained in Remark~\ref{r: clarification_remark_about_F}, all the words in $F$ are reduced words and the concatenation of any two words from $F$ is again a reduced word.   
In particular, 
since distinct reduced words represent distinct elements of $M$,
the map $[\cdot]: A^*\to M$ defined by $w \mapsto [w]$ induces an embedding $[\cdot]: F \hookrightarrow M$. 
Thus we have identified a free submonoid of $M$ of rank at least two, namely the image $[F]$ of $F$ under this embedding.  

\begin{lemma}\label{lem:unique}
Let $w \in A^*$ be a word. If $a^iw=1$ and $a^jw=1$ then $i=j$.     
\end{lemma}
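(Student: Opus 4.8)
The plan is to argue by a short left-cancellation computation, reducing everything to the non-invertibility of $a$ that was already recorded in Remark~\ref{r: a_is_not_invertible}. First I would assume, without loss of generality, that $i \leq j$, and suppose for contradiction that the inequality is strict, i.e.\ $i < j$. The goal is then to derive that some positive power of $a$ equals the identity of $M$, which will be the contradiction.

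The key step is to rewrite $a^j$ as $a^{j-i}a^i$, which is a graphical identity in the free monoid $A^*$ and hence an equality in $M$. Using the hypothesis $a^i w = 1$, I would then compute, in $M$,
\[
1 = a^j w = a^{j-i}\bigl(a^i w\bigr) = a^{j-i}\cdot 1 = a^{j-i}.
\]
Since $j - i > 0$ by assumption, this exhibits a positive power of $a$ that is trivial in $M$. If $j-i=1$ this says $a=1$, and if $j-i\geq 2$ then $a\cdot a^{\,j-i-1}=1=a^{\,j-i-1}\cdot a$, so $a^{\,j-i-1}$ is a two-sided inverse of $a$; in either case $a$ is invertible in $M$. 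This contradicts Remark~\ref{r: a_is_not_invertible}, which asserts that $a$ is not invertible (precisely because $a$ is the common initial letter of two distinct minimal invertible pieces). Hence the strict inequality is impossible and $i = j$.

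I do not expect any genuine obstacle here: the argument is a one-line cancellation, and its entire force comes from the earlier structural fact that $a$ fails to be invertible. The only point requiring a small amount of care is to record explicitly that a trivial positive power forces invertibility in an arbitrary monoid (so that the appeal to Remark~\ref{r: a_is_not_invertible} is legitimate), rather than silently assuming cancellativity of $M$, which we do not have.
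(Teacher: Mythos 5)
Your argument is correct and is essentially identical to the paper's own proof: both derive $a^{|i-j|}=1$ from the two hypotheses and conclude that $a$ would be invertible, contradicting Remark~\ref{r: a_is_not_invertible}. Your explicit observation that a trivial positive power yields a two-sided inverse of $a$ is a small but welcome clarification of a step the paper leaves implicit.
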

\begin{proof}
Seeking a contradiction suppose that $a^iw = a^jw = 1$ with $j<i$. Then $a^{i-j} = a^{i-j} a^jw = a^i w=1$. But this contradicts the fact that $a$ is not invertible.     
\end{proof}

Define a mapping $\omega: F \rightarrow \mathbb{Z}^{\geq 1} $ where $w \mapsto i$ if and only if $a^i w=1$. This is a well-defined mapping by the previous lemma. Also, it is easy to see that $\omega$ is a homomorphism to $(\mathbb{Z},+)$. The mapping $\omega$ assigns a weight to every element of the free monoid $F$.   Abusing the notation, we also use  $\omega$ to denote the map $\omega: [F] \rightarrow  \mathbb{Z}^{\geq 1}$ defined by $\omega([w])=\omega(w)$. This is well defined by the comments preceding Lemma \ref{lem:unique}.

The following result is now an immediate consequence of the previous results proved in this section.  

\begin{lemma}\label{lem_InverOfPower}
Let $w\in A^*$ be a non-empty reduced word with $w\in F$, and suppose that $a^iw=1$ with $i \geq 1$. Then the word $w$ can be written uniquely as 
\[
w \equiv w_1 w_2 \ldots w_k
\]
where $w_j \in \mathcal{B}$ for all $1 \leq j \leq k$, and 
\[
\omega(w_1) +\omega(w_2) + \ldots + \omega(w_k) = i.    
\]
In the special case that $\Delta$ contains no word beginning with $a^2$ then $\omega(w_j) = 1$ for all $1\leq j \leq k$, i.e.\ the statement above holds with $k=i$. 
\end{lemma}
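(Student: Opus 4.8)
The plan is to combine the free-monoid structure of $F$ (Lemma~\ref{l: F_is_free}) with its explicit basis $\mathcal{B} \subseteq X$ (Lemmas~\ref{lem:fin_gen_set_for_F} and the discussion following it) and the weight homomorphism $\omega$ (defined just before this statement). First I would invoke freeness: since $F$ is a free monoid on the basis $\mathcal{B}$ (by Lemma~\ref{l: F_is_free}, with $\mathcal{B}$ identified as the minimal generating set), every non-empty element $w \in F$ has a \emph{unique} factorisation $w \equiv w_1 w_2 \ldots w_k$ with each $w_j \in \mathcal{B}$. This is exactly the uniqueness asserted in the statement, so the existence and uniqueness of the decomposition is immediate from the established freeness.

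Next I would verify the weight identity. Since $\omega \colon F \to (\mathbb{Z}^{\geq 1}, +)$ is a homomorphism (noted immediately before Lemma~\ref{lem:unique}), applying $\omega$ to $w \equiv w_1 \cdots w_k$ gives $\omega(w) = \omega(w_1) + \cdots + \omega(w_k)$. It remains to identify $\omega(w)$ with $i$: by hypothesis $a^i w = 1$, and by the definition of $\omega$ together with Lemma~\ref{lem:unique} (which guarantees the defining exponent is unique), we have $\omega(w) = i$. Combining these two facts yields $\omega(w_1) + \cdots + \omega(w_k) = i$, as required.

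For the special case, I would use the structural refinement in Lemma~\ref{lem:fundamental:weight}: when no word in $\Delta$ begins with $a^2$, the value $m$ is forced to be $1$, so every piece of $X$ is a right inverse of $a$ itself, i.e.\ $\omega(x) = 1$ for each relevant generator. More directly, the final sentence of Lemma~\ref{lem:fundamental:weight} already asserts that $w$ factors as $w \equiv w_1 \cdots w_i$ with $a w_l = 1$ for each $l$; by uniqueness of the $\mathcal{B}$-factorisation these $w_l$ must be precisely the basis elements $w_j$, each of weight $1$, forcing $k = i$ and $\omega(w_j) = 1$ for all $j$.

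The only genuine subtlety — and the step I would be most careful about — is the compatibility between the two notions of factorisation: the ``invertible-piece'' decomposition coming from Lemma~\ref{lem:fundamental:weight} and the free-basis decomposition over $\mathcal{B}$. One must check that the prefix $w_1 \in X$ extracted in Lemma~\ref{lem:fundamental:weight} is genuinely a $\mathcal{B}$-letter (or a product of them), so that iterating the extraction reproduces exactly the unique $\mathcal{B}$-factorisation; this hinges on $\mathcal{B} \subseteq X \subseteq F$ and the freeness of $F$ preventing any ambiguity. Once that identification is clear, the whole statement is essentially a bookkeeping corollary of the results already proved, which is why it is flagged in the excerpt as ``an immediate consequence.''
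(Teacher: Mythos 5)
Your proposal is correct and follows exactly the route the paper intends: the paper gives no written proof, stating only that the lemma is ``an immediate consequence of the previous results,'' and your write-up assembles precisely those results (freeness of $F$ with basis $\mathcal{B}$, additivity and well-definedness of $\omega$ via Lemma~\ref{lem:unique}, and $m=1$ forcing every element of $X\supseteq\mathcal{B}$ to have weight $1$ in the special case). The compatibility worry you flag resolves itself since a weight-$1$ element of $F$ cannot factor as a product of non-empty elements of $F$ (each such factor has weight $\geq 1$ and $\omega$ is additive), so it lies in $\mathcal{B}$; nothing further is needed.
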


Note that in particular condition (C1) is satisfied if all the pieces have the same length. We note that Adjan \cite{Adjan1966} gives an algorithm for computing the set $\Delta$ by analysing overlaps of the relator with itself.  

The following lemma will allow us to express membership in $\{a \}^*$ in terms of equations. 

\begin{lemma}\label{l:e_interp_a*}
Let $u \in A^*$ be reduced. 
Then $u \in \{a \}^*$ if and only if 
$[ua]=[au]$ in $M$. 
\end{lemma}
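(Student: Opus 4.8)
The forward implication is immediate: if $u \equiv a^n$ then $ua \equiv a^{n+1} \equiv au$ as words of $A^*$, so certainly $[ua]=[au]$ in $M$. For the converse I would argue by contradiction, assuming that $u$ is reduced, that $[au]=[ua]$, and that $u \notin \{a\}^*$.

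First I would record some clean preliminary structure. By (C2) there is a piece $\delta \equiv a\delta' \in \Delta$; since $\delta$ is invertible this gives $[a]\cdot[\delta'][\delta]^{-1}=1$, so $[a]$ has a right inverse $r \in M$ with $[a]\,r=1$, and hence $a^k r^k = 1$ for every $k$. From $[au]=[ua]$ an easy induction gives $a^k u = u a^k$ in $M$ for all $k \ge 1$, and therefore $a^k u r^k = u a^k r^k = u$. Note also that no piece of $\Delta$ is a power of $a$ (otherwise $a$ would be invertible, against Remark \ref{r: a_is_not_invertible}); in particular every piece contains a non-$a$ letter.

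The heart of the proof is to compare the two reduced words $\mathrm{red}(ua^k)$ and $\mathrm{red}(a^k u)$, which are equal since $a^k u = u a^k$. The plan is to show that, for all large $k$, this common reduced word is a pure power of $a$, i.e.\ $[a^k u] = [a^{L}]$ for some $L$. Granting this, right-multiplying by $r^k$ and using $a^k u r^k = u$ yields $u = a^{L} r^k$ in $M$; if $L \ge k$ this gives $u = a^{L-k} \in \{a\}^*$, while if $L < k$ it gives $u = r^{k-L}$, whence $a^{e}u = a^{e}r^{e}=1$ with $e:=k-L \ge 1$. In the latter case, commuting with $a^e$ gives $u a^e = a^e u = 1$, so $a^e$ (and hence $a$) would be invertible, contradicting Remark \ref{r: a_is_not_invertible}. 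Thus in every case $u \in \{a\}^*$, so the assumption $u \notin \{a\}^*$ is untenable.

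It remains to establish that $[a^k u]$ is a power of $a$ for large $k$, and this is where I expect the main difficulty to lie. The guiding idea is to track the length of the maximal trailing block of $a$'s of the common reduced word. When no piece of $\Delta$ ends in the letter $a$, this count is an exact invariant of the rewriting of Theorem \ref{thm:Zhang}: every right-hand side lies in $\Delta^*$ and hence ends in a non-$a$ letter, so no rewrite can touch or create a trailing $a$; then $\mathrm{red}(ua^k)$ has exactly $k$ more trailing $a$'s than $\mathrm{red}(a^k u)$, and equality of the two words forces $k=0$, so $u$ is a power of $a$. In general a piece may end in $a$, a rewrite can then nibble at the trailing block, and the count is no longer literally preserved; the crux is to show that the rewrites triggered by the prepended (respectively appended) block $a^k$ cannot propagate arbitrarily far and alter the opposite end's $a$-block by more than a bounded amount. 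Here I would use the non-overlapping property of minimal invertible pieces (which follows from (C1) together with the overlap fact ($\dagger$): two distinct pieces cannot properly overlap, as their overlap would be an invertible proper prefix of a piece), the fact that no piece is a power of $a$, and Lemma \ref{lem:C3_NEW}, to confine every rewrite meeting a long $a$-block to a bounded window at its inner boundary. With such confinement, the common reduced word has both a leading and a trailing block of $a$'s of length $k-O(1)$; since its total length is at most $|u|+k$, for large $k$ these two blocks must overlap, forcing the whole word to be a single block of $a$'s, which is exactly what is needed.
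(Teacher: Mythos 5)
Your forward direction is fine, and the mechanism you identify for the converse --- comparing the maximal trailing block of $a$'s in the two reduced words --- is exactly the paper's. In the case you actually complete (no word of $\Delta$ ends in $a$) your argument coincides with the paper's proof, except that the paper takes $k=1$: writing $u \equiv u_1 a^y$ with $u_1$ nonempty and not ending in $a$, it shows $\red(ua) \equiv w_1 a^{y+1}$ and $\red(au) \equiv w_2 a^{y}$ with $w_1, w_2$ not ending in $a$, so the reduced forms differ and $[ua]\neq[au]$. The machinery with large $k$, the right inverse $r$, the powers $r^k$, and the dichotomy $L\ge k$ versus $L<k$ is unnecessary.

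The genuine gap is your ``general case'' where some piece of $\Delta$ ends in $a$. First, that case is vacuous, and this is the one observation you are missing: by (C2) some word of $\Delta$ begins with $a$, so $a$ is right invertible; if some word of $\Delta$ ended in $a$, then $a$ would also be left invertible and hence invertible, contradicting Remark~\ref{r: a_is_not_invertible}. Thus no word of $\Delta$ ends in $a$, so no left-hand side (and no non-empty right-hand side) of a rule in \eqref{e: Zhang_presentation} ends in $a$, and the ``nibbling at the trailing block'' you worry about cannot happen. Second, the confinement argument you propose for that case is only a plan --- you state what you ``would use'' and what ``the crux is to show'' without proving it --- so as written the proof is incomplete precisely at the point you flag as the main difficulty. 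A smaller soft spot: right-hand sides in \eqref{e: Zhang_presentation} may be empty, so ``no rewrite can create a trailing $a$'' needs a word of justification; the clean route is to note that $ua \equiv u_1a^{y+1}$ is already reduced (any applicable left-hand side would have to end in one of the appended $a$'s, which is impossible), and that every rewrite applied to $au$ takes place to the left of the trailing $a^y$ and leaves the letter preceding that block different from $a$.
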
 
\begin{proof}
Clearly if $u \in \{a\}^*$ then 
$[ua]=[au]$ in $M$. 

For the converse, suppose that $u\in A^*$ is such that $[ua]=[au]$ in $M$.
Since $a$ is right invertible and $a$ is not invertible, it follows that for all $\delta \in \Delta$ the last letter of $\delta$ is not equal to $a$.
(Note this is true for all $\delta \in \Delta$ including those $\delta$ in $\Delta$ where $\delta$ does not begin with the letter $a$.)

Seeking a contradiction, suppose that $u \not\in \{a \}^*$ and write $u \equiv u_1 a^y$ where $u_1 \in A^+$ and the last letter of $u_1$ is not equal to $a$, and $y\geq 0$.  
Consider $\red(ua) = \red(u_1 a^{y+1})$. Since for every rewrite rule $\alpha = \beta$ from \eqref{e: Zhang_presentation} neither $\alpha$ nor $\beta$ ends in the letter $a$, it follows that $\red(ua) \equiv w_1a^{y+1}$ where $w_1$ does not end in the letter $a$. 

In contrast, consider $\red(au) = \red(au_1 a^{y})$. Reasoning in the same way as in the previous paragraph $\red(au) \equiv w_2 a^y$ where $w_2$ does not end in the letter $a$ (note it may start with the letter $a$). In particular this implies that $\red(ua) \not\equiv \red(au)$ which implies $[ua] \neq [au]$. This contradicts our original assumption, and completes the proof of the lemma. 
\end{proof}

The main result we shall prove in this section is the following.

\begin{theorem}\label{thm_main}
Let $M = \pres{A}{r=1}$ and let $\Delta \subseteq A^*$ be the set of minimal invertible pieces of $r$. Suppose that:
\begin{enumerate}
    \item[(C1)] no word from $\Delta$ is a proper subword of any other word from $\Delta$, and
    \item[(C2)] there exist distinct words $\gamma, \delta \in \Delta$ with a common first letter.
\end{enumerate}
Then there exists a free submonoid $D$ of $M$ of finite rank $n\geq 2$ and a tuple of weights $\vec{\lambda} = (\lambda_1, \dots, \lambda_{n})$ such that the free monoid with weighted length relation $\FreeWeightedLengthNoFree{D}{\lambda}$ is interpretable in $M$ by systems of equations and one coefficient. 
\end{theorem}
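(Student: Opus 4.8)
The plan is to deduce the theorem from Lemma \ref{l: key_general_lemma} by exhibiting the required submonoids $C$ and $D$ together with the weight data. I would take $c_0 = [a]$ and $C = \langle [a]\rangle$, and take $D = [F]$, the free submonoid of $M$ of rank $n = |\mathcal{B}| \geq 2$ produced by Lemmas \ref{l: F_is_free} and \ref{lem:basis:size}, with free basis $[\mathcal{B}] = \{[\beta_1], \dots, [\beta_n]\}$. Writing $d_i = [\beta_i]$, the single coefficient used throughout will be $[a]$, matching the ``one coefficient'' in the statement.

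First I would check that $C$ is infinite: since $a$ is not invertible (Remark \ref{r: a_is_not_invertible}), no left-hand side of a relation from \eqref{e: Zhang_presentation} (which lies in $\Delta^+$) can occur inside a power of $a$, so the words $a^i$ are pairwise distinct reduced words and hence represent distinct elements of $M$. Next I would verify that both $C$ and $D$ are e-interpretable in $M$ with the identity interpreting map, which for submonoids reduces to showing that their underlying sets are e-definable (the operation and equality are then inherited from $M$). For $C$, Lemma \ref{l:e_interp_a*} shows that a reduced word $u$ satisfies $u \in \{a\}^*$ exactly when $[ua] = [au]$; thus $C$ is precisely the centralizer of $[a]$ and is e-defined by the single equation $xa = ax$. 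For $D = [F]$, the key observation is that an element $m \in M$ lies in $[F]$ if and only if $m$ is a right inverse of some power of $a$ (or $m = 1$), equivalently if and only if there is some $c \in C$ with $cm = 1$; hence $[F]$ is e-defined by the formula $\exists c\,(ca = ac \ \wedge\ cm = 1)$, again using only the coefficient $a$.

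It remains to supply the weights. Each basis element $\beta_i \in \mathcal{B} \subseteq F$ satisfies $a^{k_i}\beta_i = 1$ with $k_i = \omega(\beta_i) \geq 1$, so $c_0^{k_i} d_i = 1$ for all $i$. With $\vec{k} = (k_1, \dots, k_n)$ all hypotheses of Lemma \ref{l: key_general_lemma} are met, and the lemma yields that the free monoid with weighted length relation $(D, \cdot, 1, =, \WeightedLengthRelation{k})$ is e-interpretable in $M$; setting $\vec{\lambda} = \vec{k}$ gives exactly $\FreeWeightedLengthNoFree{D}{\lambda}$ and completes the argument. Since all the structural facts (freeness of $F$, rank at least two, the weight homomorphism $\omega$, and the centralizer description of $C$) are already established, I expect the only delicate point at this stage to be the two e-definability claims: verifying that the centralizer of $[a]$ contains no element beyond $C$ (which is exactly what Lemma \ref{l:e_interp_a*} guarantees) and that the ``right inverse of a power of $a$'' condition cuts out precisely $[F]$ with no spurious elements.
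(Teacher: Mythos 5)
Your proposal is correct and follows essentially the same route as the paper: the paper's proof likewise takes $C=\langle [a]\rangle$ (shown infinite via the right-invertibility of $a$), e-defines it by $ax=xa$ using Lemma \ref{l:e_interp_a*}, e-defines $[F]$ by the system $ay=ya,\ yx=1$, and then feeds these into Lemma \ref{l: key_general_lemma} with the weights $\omega(w_i)$. Your version merely makes explicit a few points the paper leaves terse (the reduction of e-interpretability to e-definability of the underlying set, and the final invocation of Lemma \ref{l: key_general_lemma}), and your alternative infiniteness argument via reduced words is equally valid.
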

\begin{proof}
Let $F$ be the free monoid from our previous arguments (see Remark \ref{r: clarification_remark_about_F}), and consider the embedding $[F]$ of $F$  in $M$ via the map $[]:A^* \to M$ (see the discussion above Lemma \ref{lem:unique}).  Let  $\vec{\omega}$ be the tuple  $(\omega(w_1), \dots, \omega(w_n))$  where $w_1, \dots, w_n$  freely generate $F$ and $\omega: F \to \mbb{Z}^{\geq 1}$ is the homomorphism defined after Lemma \ref{lem:unique}, so that $a^{\omega(w_i)}w_i = 1$ for all $i$ (by Lemma \ref{lem_InverOfPower}).  
Recall that $\omega$ also then defines a map $\omega:[F] \rightarrow \mbb{Z}^{\geq 1}$.   

We claim that $a$ generates an infinite submonoid of $M$. Indeed, if it did not, we would have $a^k = a^{k+\ell}$ for some $k,\ell \geq 0$. Since $a$ is right invertible (due to condition (C2)), this implies that  $a^{\ell} = 1$, from where it follows that $a$ is invertible, a contradiction.  This proves the claim. 

By Lemma~\ref{l:e_interp_a*} the submonoid $\langle a \rangle$ is interpretable in $M$ by the equation $ax = xa$ (Lemma \ref{l:e_interp_a*}). Since $[F]=\{ x\in M\mid a^tx = 1 \text{ for some } t\in \mbb{N}\setminus\{0\}\}$, it follows that $[F]$ is e-interpretable in $M$ by the system of two equations $ay=ya, yx = 1$. 
\end{proof}

The following two results follow immediately from the above Theorem \ref{thm_main} and from Proposition \ref{Diophantine_reduction} regarding reducibility of decision problems. 

\begin{cor}\label{c: reduction_word_equations_weighted_length_constraints}
Let $M$ be a monoid satisfying the hypothesis of Theorem \ref{thm_main}. Then there exists a free monoid with a weighted length relation $\FreeWeightedLengthNoFree{D}{\omega}$ such that the Diophantine problem in $\FreeWeightedLengthNoFree{D}{\omega}$ is reducible to the Diophantine problem in $M$. In particular, if the latter is decidable, then systems of word equations with $\vec{\omega}$-weighted length constraints are decidable as well.
\end{cor}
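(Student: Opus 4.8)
The plan is to obtain the statement as a direct combination of Theorem~\ref{thm_main} with the reduction machinery of Proposition~\ref{Diophantine_reduction}; all of the substantive combinatorial work has already been carried out in constructing the interpretation, so here it only remains to assemble the pieces. First I would pin down the weight tuple. In the proof of Theorem~\ref{thm_main} the abstract weights $\vec{\lambda}$ are taken to be precisely $\vec{\omega} = (\omega(w_1), \ldots, \omega(w_n))$, where $w_1, \ldots, w_n$ freely generate $F$ (so that $D = [F]$) and $\omega$ is the weight homomorphism defined just after Lemma~\ref{lem:unique}. With this identification, Theorem~\ref{thm_main} asserts exactly that the structure $\FreeWeightedLengthNoFree{D}{\omega}$ is interpretable in $M$ by systems of equations, using the single coefficient $a$; in the terminology of Subsection~\ref{s: interpretability} this is e-interpretability of $\FreeWeightedLengthNoFree{D}{\omega}$ in $M$.

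Next I would apply Proposition~\ref{Diophantine_reduction} to this e-interpretation, which immediately yields that the Diophantine problem in $\FreeWeightedLengthNoFree{D}{\omega}$ is reducible to the Diophantine problem in $M$, and hence that decidability of $\mc{D}(M)$ entails decidability of $\mc{D}(\FreeWeightedLengthNoFree{D}{\omega})$. The presence of the coefficient $a$ poses no difficulty, since interpretability (Definition~\ref{d: interpretability}) permits coefficients from the ambient structure by convention. Finally I would unwind definitions to identify the target problem: because $D$ is free, a system of equations over $(D, \cdot, 1, \WeightedLengthRelation{\omega})$ is exactly a system of word equations over $D$ together with finitely many constraints $\WeightedLengthRelation{\omega}(w_1, w_2)$, i.e.\ inequalities $|w_1|_{\vec{\omega}} \le |w_2|_{\vec{\omega}}$, which is precisely a system of word equations with $\vec{\omega}$-weighted length constraints as defined in Subsection~\ref{s: preliminaries_logic}.

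Since every step is a direct appeal to an already-proved result, I do not expect any genuine obstacle: the only point demanding care is the bookkeeping in the first step, namely verifying that the abstract tuple $\vec{\lambda}$ of Theorem~\ref{thm_main} coincides with the concrete tuple $\vec{\omega}$ named in the corollary, and that the free monoid $D$ is the same object $[F]$ throughout.
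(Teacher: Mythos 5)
Your proposal matches the paper's argument exactly: the corollary is stated in the paper as an immediate consequence of Theorem~\ref{thm_main} (which provides the e-interpretation of $\FreeWeightedLengthNoFree{D}{\omega}$ in $M$ with $\vec{\lambda}=\vec{\omega}$ and $D=[F]$) combined with Proposition~\ref{Diophantine_reduction}. Your additional bookkeeping about the coefficient $a$ and the identification of the weight tuple is correct and consistent with the paper.
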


\begin{theorem}\label{t: undec_AE_one_relator}
  Any one-relator monoid of the form $\langle A\mid w=1\rangle$ satisfying conditions (C1) and (C2) has undecidable positive $AE$-theory with coefficient. In particular, its first-order theory with coefficients is undecidable.
\end{theorem}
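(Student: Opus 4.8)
The plan is to reduce the statement to the classical undecidability of the positive $AE$-theory with coefficients of a free nonabelian semigroup by exhibiting an interpretation of such a semigroup inside $M$, and then to push the undecidability across that interpretation via Proposition~\ref{Diophantine_reduction}. The point worth stressing at the outset is that for this theorem we need \emph{neither} the weighted length relation \emph{nor} condition (C3): conditions (C1) and (C2) already produce, inside $M$, a free submonoid of rank at least two, and the free monoid structure alone is enough to carry the undecidability.

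First I would invoke Theorem~\ref{thm_main}. Since $M = \pres{A}{w=1}$ satisfies (C1) and (C2), that theorem supplies a free submonoid $D \subseteq M$ of finite rank $n \geq 2$ together with an interpretation of $\FreeWeightedLengthNoFree{D}{\lambda}$ in $M$ by systems of equations (and one coefficient). Forgetting the relation $\WeightedLengthRelation{\lambda}$, this yields in particular that the plain free monoid $(D, \cdot, 1, =)$ is e-interpretable in $M$: the interpreting formulas are precisely those appearing in the proof of Theorem~\ref{thm_main}, namely the domain of $[F]$ is cut out by the system $ay = ya,\ yx = 1$, while $\cdot$ and $=$ are inherited equationally.

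Next I would bridge from the free monoid $D$ to a free semigroup, since the undecidability results of \cite{quine, Durnev, marchenkov} are phrased for free semigroups. Fix a basis $\{d_1, \dots, d_n\}$ of $D$, treated as coefficients. The free semigroup $S$ of rank $n$ is $PE$-interpretable in $D$: its domain $D \setminus \{1\}$ is $PE$-definable, since in a free monoid an element is non-trivial exactly when it begins with a basis letter, i.e. $x \neq 1$ if and only if $\exists y\, \bigl( (x = d_1 y) \vee \dots \vee (x = d_n y) \bigr)$, a disjunction of systems of equations; and multiplication and equality on $S$ are simply the restrictions of those on $D$, hence equationally defined. Viewing the e-interpretation of $D$ in $M$ as a $PE$-interpretation and composing with the above via transitivity (Proposition~\ref{interpretation_transitivity}), I obtain that $S$ is $PE$-interpretable in $M$.

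Finally I would apply Proposition~\ref{Diophantine_reduction}: since $S$ is $PE$-interpretable in $M$, the positive $AE$-theory of $S$ reduces to the positive $AE$-theory of $M$. As $n \geq 2$, the semigroup $S$ is free nonabelian, so by Quine~\cite{quine} and its strengthening in \cite{Durnev, marchenkov} its positive $AE$-theory with coefficients is undecidable; hence so is that of $M$. The first-order claim follows at once, because every positive $AE$-sentence is a first-order sentence, so undecidability of the positive $AE$-theory of $M$ entails undecidability of its full first-order theory. The step requiring the most care is the positivity bookkeeping in the semigroup-to-monoid bridge: one must verify that restricting to the non-identity elements and relativizing quantifiers is accomplished by positive (disjunctive) formulas, so that the composite stays within the $PE$-fragment for which Proposition~\ref{Diophantine_reduction} guarantees a reduction of the positive $AE$-theory; everything else is an assembly of results already established above.
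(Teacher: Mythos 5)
Your proposal is correct and follows essentially the same route as the paper, whose proof is a one-line appeal to Theorem~\ref{thm_main}, the undecidability of the positive $AE$-theory with coefficients of free monoids \cite{Durnev, marchenkov}, and Proposition~\ref{Diophantine_reduction}. The only difference is that you carefully spell out the $PE$-interpretation bridging the free monoid $D$ to a free nonabelian semigroup, a step the paper leaves implicit by quoting the undecidability result directly for free monoids; your bookkeeping there is sound and does not change the argument.
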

\begin{proof}
It is an immediate consequence of Theorem \ref{thm_main}, of the fact that the $AE$-theory with coefficients of free monoids is undecidable \cite{Durnev, marchenkov} and of reducibility of theories (Proposition \ref{Diophantine_reduction}). 
\end{proof}

If we add to Theorem \ref{thm_main} the extra condition that no word in $\Delta$ starts with $a^2$, then the same result holds with all weights being $1$, i.e.\ $\vec{\lambda} = (1, \dots, 1)$. In this case  $\WeightedLengthRelation{\lambda}$ is the standard length relation $\LengthRelation$:

\begin{theorem}\label{thm_main_2}
Let $M = \pres{A}{r=1}$ and let $\Delta \subseteq A^*$ be the set of minimal invertible pieces of $r$. Suppose that: 
\begin{enumerate}
    \item[(C1)] no word from $\Delta$ is a proper subword of any other word from $\Delta$,
    \item[(C2)] there exist distinct words $\gamma, \delta \in \Delta$ with a common first letter, say $a$,
    \item[(C3)] no word in $\Delta$ starts with $a^2$.
\end{enumerate}
Then there exists a free monoid $D$ of finite rank $n\geq 2$ such that the free monoid with length relation $\FreeLengthNoFree{D}$ is interpretable in $M$ by systems of equations.
\end{theorem}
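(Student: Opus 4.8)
The plan is to obtain Theorem~\ref{thm_main_2} as a specialization of Theorem~\ref{thm_main}, using condition (C3) only to force every weight to equal $1$. First I would invoke Theorem~\ref{thm_main} directly: under (C1) and (C2) it already produces the free submonoid $D = [F]$ of $M$, of rank $n \geq 2$ (the rank bound coming from Lemma~\ref{lem:basis:size}), freely generated by the image of the basis $\mathcal{B} = \{w_1, \dots, w_n\}$ of $F$, together with the weight tuple $\vec{\omega} = (\omega(w_1), \dots, \omega(w_n))$, and shows that $\FreeWeightedLengthNoFree{D}{\omega}$ is interpretable in $M$ by systems of equations. All of that machinery --- the e-definability of $\langle a\rangle$ via Lemma~\ref{l:e_interp_a*}, the e-definability of $[F] = \{x \in M \mid a^t x = 1 \text{ for some } t > 0\}$ via the system $ay = ya$, $yx = 1$, and the e-interpretation of the weighted length relation via Lemma~\ref{l: key_general_lemma} (applied with $C = \langle a\rangle$, $d_i = [w_i]$, and $k_i = \omega(w_i)$, using $a^{\omega(w_i)}w_i = 1$) --- is independent of (C3), so none of it needs to be redone.

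The only new ingredient is the observation that (C3) makes all the weights trivial. For this I would appeal to the final assertion of Lemma~\ref{lem_InverOfPower}: when $\Delta$ contains no word beginning with $a^2$, any non-empty reduced $w \in F$ with $a^i w = 1$ factors as $w \equiv w_1 w_2 \cdots w_k$ with each $w_j \in \mathcal{B}$ and $\omega(w_j) = 1$ for every $j$. Applying this to a basis element $w_i \in \mathcal{B}$ itself, which admits only the trivial factorization $w_i \equiv w_i$ because $\mathcal{B}$ freely generates $F$, forces $\omega(w_i) = 1$. Hence $\vec{\omega} = (1, \dots, 1)$.

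Finally, I would note that, by the definition of the weighted word-length $|\cdot|_{\vec{\lambda}}$ on a free monoid, taking all weights equal to $1$ recovers exactly the standard word-length counting basis factors; thus with $\vec{\omega} = (1, \dots, 1)$ the relation $\WeightedLengthRelation{\omega}$ on $D$ is, by definition, the standard length relation $\LengthRelation$. Consequently the interpretation of $\FreeWeightedLengthNoFree{D}{\omega}$ supplied by Theorem~\ref{thm_main} is already an interpretation of $\FreeLengthNoFree{D}$ by systems of equations, which is exactly the assertion.

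I do not expect a genuine obstacle, since all the delicate combinatorics (freeness of $F$, the embedding $[F] \hookrightarrow M$, and the e-definability of the relevant subsets) is already discharged by the lemmas feeding Theorem~\ref{thm_main}. The one point requiring a little care is locating precisely where (C3) enters --- namely the special case of Lemma~\ref{lem_InverOfPower} --- and justifying the passage from ``the basis factors of any element have weight $1$'' to ``every basis element has weight $1$'', which is immediate because each basis element is a factor of itself.
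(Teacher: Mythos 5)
Your proposal is correct and follows essentially the same route as the paper: the paper's proof of Theorem~\ref{thm_main_2} likewise reruns the argument of Theorem~\ref{thm_main} and invokes the last part of Lemma~\ref{lem_InverOfPower} to conclude $\omega(w_i)=1$ for all $i$, so that $\FreeWeightedLengthNoFree{D}{\omega}=\FreeLengthNoFree{D}$. Your extra remark that each basis element is its own factorization, forcing its weight to be $1$, is a correct filling-in of a step the paper leaves implicit.
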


\begin{proof}
The proof works in the same way as in Theorem \ref{thm_main}, with the addition that the last part of Lemma \ref{lem_InverOfPower} now ensures that $\omega(w_i) = 1$ for all $i=1, \dots, n$. Then $\vec{\omega}= (1,  \dots, 1)$ and  $\FreeWeightedLengthNoFree{D}{\omega} = \FreeLengthNoFree{D}$, where $\FreeWeightedLengthNoFree{D}{\omega}$ is the free monoid with weighted length relation given by Theorem \ref{thm_main}. Hence $\FreeLengthNoFree{D}$ is interpretable in $M$ by systems of equations and one coefficient.
\end{proof}

We obtain an analogue of Corollary \ref{c: reduction_word_equations_weighted_length_constraints}

\begin{cor}
Let $M$ be a monoid satisfying the hypothesis of Theorem \ref{thm_main_2}. Then there exists a free monoid with (non-weighted) length relation $\FreeLengthNoFree{D}$ such that the Diophantine problem in $\FreeLengthNoFree{D}$ is reducible to the Diophantine problem in $M$. In particular, if the latter is decidable, then systems of word equations with length constraints are decidable as well.
\end{cor}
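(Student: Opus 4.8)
The plan is to deduce this corollary as an immediate formal consequence of Theorem \ref{thm_main_2} together with Proposition \ref{Diophantine_reduction}, in exactly the same way that Corollary \ref{c: reduction_word_equations_weighted_length_constraints} is obtained from Theorem \ref{thm_main}. No new combinatorial work on $M$ is needed; the content has already been established, and what remains is to package it through the reduction machinery of Subsection \ref{s: interpretability} and to read off the standard decision-problem interpretation.

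First I would invoke Theorem \ref{thm_main_2}: since $M$ satisfies conditions (C1), (C2) and (C3), there is a free submonoid $D$ of $M$ of finite rank $n \geq 2$ such that the structure $\FreeLengthNoFree{D}$---the free monoid $D$ equipped with the (non-weighted) length relation $\LengthRelation$---is interpretable in $M$ by systems of equations (using a single coefficient $a$). By definition this is an e-interpretation in the sense of Definition \ref{d: interpretability}. I would then apply Proposition \ref{Diophantine_reduction} with $M_1 = \FreeLengthNoFree{D}$ and $M_2 = M$, which yields at once that $\mc{D}(\FreeLengthNoFree{D})$ is reducible to $\mc{D}(M)$, and hence that decidability of $\mc{D}(M)$ implies decidability of $\mc{D}(\FreeLengthNoFree{D})$. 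This already establishes the first sentence of the corollary.

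For the concluding ``in particular'' clause I would recall, from the definitions in Subsection \ref{s: preliminaries_logic}, that the Diophantine problem in the structure $\FreeLengthNoFree{D}$ is precisely the problem of deciding solvability of systems of word equations with (non-weighted) length constraints over the free monoid $D$. Since $D$ has rank $n \geq 2$, solving this problem over $D$ solves the general word-equations-with-length-constraints problem: any instance over a larger finite alphabet embeds into a two-letter (hence into a rank-$n$) alphabet via a block encoding that scales all lengths by a common factor and so preserves every length inequality $\LengthRelation(w_1,w_2)$. Chaining the implications---$\mc{D}(M)$ decidable $\Rightarrow$ $\mc{D}(\FreeLengthNoFree{D})$ decidable $\Rightarrow$ word equations with length constraints decidable---gives the stated conclusion.

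The one point I would treat with care, and the only step that is not purely formal bookkeeping, is this last identification: confirming that the rank $n \geq 2$ provided by Theorem \ref{thm_main_2} genuinely captures the full longstanding open problem rather than some restricted fragment of it. Once the rank is at least two this follows from the standard interreducibility of word equations with length constraints across free monoids of rank $\geq 2$; everything else is a direct appeal to Proposition \ref{Diophantine_reduction}.
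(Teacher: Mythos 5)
Your first two paragraphs are exactly the paper's proof: the corollary is derived there, just like Corollary \ref{c: reduction_word_equations_weighted_length_constraints}, as an immediate consequence of Theorem \ref{thm_main_2} (which supplies the e-interpretation of $\FreeLengthNoFree{D}$ in $M$) together with Proposition \ref{Diophantine_reduction}; and under the definitions of Subsection \ref{s: preliminaries_logic} the Diophantine problem of $\FreeLengthNoFree{D}$ \emph{is} the problem of solving systems of word equations with length constraints over $D$, so the ``in particular'' clause is a restatement rather than a further reduction.

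The one step you add --- and which you single out as the only non-formal point --- is flawed as justified. A uniform block encoding $h\colon A^* \to \{a,b\}^*$ does preserve every equation and scales all lengths by a common factor, but that only gives the forward direction: a solution over $A^*$ yields a solution of the encoded system over $\{a,b\}^*$. A reduction needs the converse, and the encoded system may have spurious solutions in which variables take values outside the code submonoid $h(A^*)$; forcing membership in that submonoid is not known to be expressible by word equations with length constraints, so the claimed ``standard interreducibility across free monoids of rank $\geq 2$'' in the direction larger alphabet $\to$ smaller alphabet is not the routine fact you present it as. Fortunately, you do not need it. To connect $\mc{D}(\FreeLengthNoFree{D})$ with the usual formulation of the open problem over a two-letter alphabet, the reduction goes the other, easy way: given a system with length constraints whose constants are written over two fixed basis elements $d_1, d_2$ of $D$, it is solvable in $D$ if and only if it is solvable in $\{d_1,d_2\}^*$. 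One direction is inclusion; for the other, apply the retraction $D^* \to \{d_1,d_2\}^*$ fixing $d_1,d_2$ and sending every other basis element to $d_1$. This is a homomorphism fixing all constants and preserving word length exactly, hence it carries any solution over $D$ to a solution over $\{d_1,d_2\}^*$ satisfying the same equations and length constraints. Thus the two-letter problem reduces to $\mc{D}(\FreeLengthNoFree{D})$, and hence to $\mc{D}(M)$, which is all the corollary needs.
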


We further prove that the monoids from Theorems \ref{thm_main} and \ref{thm_main_2} naturally embed the monoids from Example \ref{r: n_bicyclic}.    

\begin{theorem}\label{thm:nbicyclicembeds}
Let $M = \pres{A}{r=1}$ and let $\Delta \subseteq A^*$ be the set of minimal invertible pieces of $r$. Suppose conditions (C1), (C2), (C3) are satisfied, i.e.: 
\begin{enumerate}
    \item[(C1)] no word from $\Delta$ is a proper subword of any other word from $\Delta$,
    \item[(C2)] there exist distinct words $\gamma, \delta \in \Delta$ with a common first letter, say $a$,
    \item[(C3)] no word in $\Delta$ starts with $a^2$.
\end{enumerate}
Let 
\[
\Sigma_a = \{ 
w \in A^* : \mbox{
$w$ is reduced and $[aw]=1$
}
\}.
\]
Then 
\begin{enumerate}
\item[(i)] $\Sigma_a$ is a finite set with $|\Sigma_a| \geq 2$; 
\item[(ii)] the submonoid of $M$ generated by $\Sigma_a$ is free with basis $\Sigma_a$. 
\end{enumerate}
Let $\Sigma_a = \{\gamma_1, \ldots, \gamma_q \}$. Then the submonoid of $M$ generated by $\{ [a] \} \cup [\Sigma_a]$ is naturally isomorphic to the  monoid defined by the presentation 
\[
\pres{a, d_1, d_2, \ldots, d_q}{
ad_1=1, \ldots, ad_q=1
}.
\]
\end{theorem}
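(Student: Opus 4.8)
The plan is to prove all three assertions simultaneously by first identifying $\Sigma_a$ with the canonical basis $\mathcal{B}$ of the free monoid $F$. Unwinding the definitions, a reduced word $w$ lies in $\Sigma_a$ precisely when $[aw]=1$, which by the definition of $F$ means $w \in F$ with $\omega(w)=1$ (the well-definedness of $\omega$ being Lemma~\ref{lem:unique}). I would then invoke Lemma~\ref{lem_InverOfPower}: under (C1)--(C3) every non-empty $w \in F$ factors uniquely as a product $w_1 \cdots w_k$ of basis elements with $\omega(w_j)=1$ for each $j$ and $k=\omega(w)$. Applying this to a basis element shows each element of $\mathcal{B}$ has weight $1$, while applying it to an arbitrary weight-$1$ word forces $k=1$; hence $\Sigma_a = \mathcal{B}$. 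Assertion (i) then follows immediately, finiteness from $\mathcal{B} \subseteq X$ (Lemma~\ref{lem:fin_gen_set_for_F}) and $|\Sigma_a| \geq 2$ from Lemma~\ref{lem:basis:size}, and assertion (ii) follows because $F$ is free with basis $\mathcal{B}$ (Lemma~\ref{l: F_is_free}) and embeds in $M$ via the reduced-word embedding $[\cdot]\colon F \hookrightarrow M$ discussed above Lemma~\ref{lem:unique}.

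For the final isomorphism, write $\Sigma_a = \{\gamma_1, \ldots, \gamma_q\}$ and $N = \pres{a, d_1, \ldots, d_q}{ad_1=1, \ldots, ad_q=1}$. I would define a homomorphism $\psi\colon N \to M$ by $a \mapsto [a]$ and $d_i \mapsto [\gamma_i]$; it is well defined since $[a\gamma_i]=1$ by the definition of $\Sigma_a$, and its image is exactly the submonoid $M'$ generated by $\{[a]\}\cup[\Sigma_a]$ that we wish to describe. The rewriting system $\{ad_i \to 1 : 1 \leq i \leq q\}$ is length-reducing and has no nontrivial overlaps, so it is complete, and its normal forms are exactly the words $d_{i_1} \cdots d_{i_s} a^t$ (no $a$ may be immediately followed by a $d_i$). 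Since every element of $N$ has such a normal form, it remains only to show $\psi$ is injective, i.e.\ that distinct normal forms map to distinct elements of $M$.

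The key step will be to compute $\psi(d_{i_1}\cdots d_{i_s} a^t) = [\gamma_{i_1}\cdots\gamma_{i_s}a^t]$ and to show that the word $ua^t$, with $u \equiv \gamma_{i_1}\cdots\gamma_{i_s}$, is already reduced. Here $u$ is reduced (Remark~\ref{r: clarification_remark_about_F}) and ends in a letter other than $a$, because each $\gamma_i \in X$ has the form $\gamma'(a\gamma')^{-1}$ with $(a\gamma')^{-1} \in \Delta^*$ (Lemma~\ref{lem_Z2}) and no word of $\Delta$ ends in $a$ (as established in the proof of Lemma~\ref{l:e_interp_a*}). Any left-hand side of a rule from \eqref{e: Zhang_presentation} lies in $\Delta^+$ and hence ends in a non-$a$ letter, so it cannot occur as a factor of $ua^t$ that straddles the boundary into the trailing block $a^t$; being therefore confined to the reduced word $u$, it cannot occur at all. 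Thus $ua^t$ is reduced, and since reduced words are unique representatives, distinct such words represent distinct elements of $M$.

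Injectivity would then be read off directly: from the reduced word $ua^t$ one recovers $t$ as the length of its maximal suffix of $a$'s (as $u$ does not end in $a$), then $u$, and finally the sequence $(i_1, \ldots, i_s)$ from the unique factorization of $u$ in the free monoid $F$. Hence $\psi(d_{i_1}\cdots d_{i_s}a^t) = \psi(d_{j_1}\cdots d_{j_{s'}}a^{t'})$ forces $t=t'$ and $(i_1,\ldots,i_s) = (j_1,\ldots,j_{s'})$, so $\psi$ is injective and therefore an isomorphism onto $M'$. I expect the main obstacle to be precisely the reducedness argument for $ua^t$: the whole proof rests on controlling how the infinite rewriting system \eqref{e: Zhang_presentation} can act across the junction between $u$ and the appended power $a^t$, and it is exactly conditions (C1)--(C3), via the fact that pieces of $\Delta$ never end in $a$, that rule out any boundary-straddling reduction.
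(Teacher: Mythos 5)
Your proposal is correct and follows essentially the same route as the paper: establish $\Sigma_a=\mathcal{B}$, deduce (i) and (ii) from Lemmas~\ref{lem:fin_gen_set_for_F}, \ref{lem:basis:size} and \ref{l: F_is_free}, and prove the final isomorphism by matching the normal forms $d_{i_1}\cdots d_{i_s}a^t$ of the target presentation against the reduced words $\gamma_{i_1}\cdots\gamma_{i_s}a^t$ in $M$, using that no left-hand side of a rule from \eqref{e: Zhang_presentation} ends in $a$. The only (harmless) divergence is that you obtain $\Sigma_a=\mathcal{B}$ directly from the weight function via Lemma~\ref{lem_InverOfPower}, whereas the paper first identifies $\Sigma_a$ with the set $X$ using Lemma~\ref{lem:fundamental:weight} and then checks indecomposability via Lemma~\ref{lem:reduced}.
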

\begin{proof}
We claim that $\Sigma_a$ is equal to the set $X = \{\beta(a^j\beta)^{-1}\mid a^j\beta\in\Delta\}$ defined above; see Lemma~\ref{lem:fin_gen_set_for_F}.    
It is immediate from the definition of $X$ that $X \subseteq \Sigma_a$. 
For the converse, let $\gamma \in \Sigma_a$. 
This means that $\gamma$ is a reduced word and $[a\gamma]=1$ in $M$.       
By Lemma~\ref{lem:fundamental:weight} we can write $a\gamma \equiv a\gamma'\gamma''$ with $\gamma' \in X$ and $a\gamma' = 1$ in $M$. 
Then $\gamma'' = (a\gamma')\gamma'' = a\gamma = 1$ in $M$. 
Since $\gamma$ is a reduced word it follows that $\gamma'' \equiv \epsilon$ and thus $\gamma \equiv \gamma' \in X$. 
This completes the proof that $X = \Sigma_a$.             

Since $X = \Sigma_a$, part (i) now follows from Lemmas~\ref{lem:fin_gen_set_for_F} and \ref{lem:basis:size}.

To prove part (ii) it will suffice to prove that $\Sigma_a = \mathcal{B}$, where $\mathcal{B}$ is the unique basis of the free monoid generated by $\Sigma_a=X$.  
To prove this it will suffice to prove that no $\gamma \in \Sigma_a$ can be written as a product of other $\gamma$ from $\Sigma_a$.     
Suppose that 
\[
\gamma = \gamma_1 \gamma_2 \ldots \gamma_m 
\]
where $\gamma_i \in \Sigma_a$ for all $1 \leq i \leq m$.   
Then
\[
1 = a \gamma = a  \gamma_1 \gamma_2 \ldots \gamma_m = \gamma_2 \ldots \gamma_m
\] 
By Lemma~\ref{lem:reduced}, $\gamma_2 \ldots \gamma_m$ is a reduced word and hence it follows that it must equal the empty word.  
Hence $m=1$ and $\gamma \equiv \gamma_1$ since they are both reduced words and they are equal in $M$.    
This completes the proof that $\Sigma_a = \mathcal{B}$, and hence completes the proof of (ii).  

For the last part, let $w \in (\{a\} \cup \Sigma_a)^* = \{a, \gamma_1, \ldots, \gamma_q \}^*$. Since $a \gamma_i = 1$ for all $i$, this word is equal in $M$ to a word $w'$ where $w'$ has the form $w' \equiv w_1 a^j$, where $w_1 \in \{\gamma_1, \ldots, \gamma_q\}^*$. We claim that in fact $\red(w) \equiv w_1 a^j$. Indeed, since none of the words appearing in the rewrite rules in \eqref{e: Zhang_presentation} ends in $a$ (because otherwise together with condition (C2) this would imply that $a$ is invertible) to show that $w_1a^j$ is reduced it suffices to prove that $w_1$ is reduced, and this was proved in Lemma~\ref{lem:reduced}. 
Therefore, each element of the submonoid of $M$ generated by $\{[a]\} \cup [\Sigma_a]$ may be uniquely written in the form $\alpha a^j$ for some $j \geq 0$ and some word $\alpha \in \{ \gamma_1, \ldots, \gamma_q \}^*$. Now consider the monoid $N$ defined by the presentation 
\[
\pres{a, d_1, d_2, \ldots, d_q}{
ad_1=1, \ldots, ad_q=1}
\] 
This is a finite complete presentation, and the reduced words are precisely those of the form $\beta a^j$ where $j \geq 0$ and $\beta \in \{d_1, \ldots, d_q \}^*$. 

Let $\phi: \{ a, d_1, \ldots, d_q \}^* \rightarrow A^*$ be the homomorphism induced by the map $a \mapsto a$, and $d_i \mapsto \gamma_i$ for $1 \leq i \leq q$. Since each relation in the presentation for $\langle a, d_1, \dots, d_q \rangle$ is preserved by this homomorphism it follows that $\phi$ induces a homomorphism $\phi: \langle a, d_1, \dots, d_q \rangle \rightarrow M$. Moreover, this homomorphism maps $\langle a, d_1, \dots, d_q \rangle$ bijectively to the submonoid of $M$ generated by $\{[a]\} \cup [\Sigma_a]$ since it clearly defines a bijection between the normal forms described above. This completes the proof of the theorem.
\end{proof}

\begin{remark}
We follow the notation of the previous Theorem \ref{thm:nbicyclicembeds}. In the proof of Theorem \ref{thm_main} we showed that both $\langle a \rangle$ and $\langle \Sigma_a\rangle$ are e-interpretable in $M$. It is natural to ask whether the submonoid $\langle a, \Sigma_a\rangle$, which by Theorem \ref{thm:nbicyclicembeds} is isomorphic to the  monoid from Example \ref{r: n_bicyclic}, is itself e-interpretable in $M$. The answer to this question is not clear and we leave it open.
\end{remark}

\subsection{One-relator monoids with hyperbolic undirected Cayley graph and hyperbolic group of units}

In this subsection we  prove some  sufficient conditions for one-relator monoids to have hyperbolic undirected Cayley graph and to have hyperbolic group of units. These are of interest to the paper given our question in the introduction regarding the reducibility of the Diophantine problem in a special one-relator monoid  to the same problem in its group of units: since the Diophantine problem in hyperbolic groups is decidable \cite{dahmani,sela_hyperbolic}, such a reduction would imply the decidability of the Diophantine problem in the one-relator monoid.

Before presenting the main result of this section we first define what we mean by the undirected Cayley graph of a monoid, and what it means for this graph to be hyperbolic.  
For more background on the theory of  
hyperbolic metric spaces 
and hyperbolic groups 
we refer the reader to \cite{BridsonHBook}. 
Let $(X,d)$ be a metric space. 
For $x, y \in X$ a \emph{geodesic path} from $x$ to $y$ is a map 
$f:[0,l] \rightarrow X$ from the closed interval $[0,l] \subseteq \mathbb{R}$ to $X$ such that 
$f(0) = x$, $f(l)=y$ and 
$d(f(a),f(b)) = |a-b|$ for all $a,b \in [0,l]$. 
Note in particular this implies that $d(x,y)=l$.
The image $\alpha$ of the map $f$ is called a \emph{geodesic segment} with endpoints $x$ and $y$. 
A \emph{geodesic metric space} is one in which there exist geodesic 
segments between all pairs of points. 
Note that in general there can be more than one geodesic segment between a given pair of points. 
A \emph{geodesic triangle} $\Omega$ in $X$ is a union of three geodesic segments from $x$ to $y$, $y$ to $z$ and $z$ to $x$, where $x,y,z \in X$. 
These three geodesic segments are called the \emph{sides} of the geodesic triangle $\Omega$.  

\begin{dfn}\label{def:hyperbolic:space}
Let $X$ be a geodesic metric space and let 
$\Omega$ be a geodesic triangle in $X$ with sides $\alpha$, $\beta$ and $\gamma$.   
The triangle $\Omega$ is called \emph{$\delta$-slim} if 
for every point $a$ on $\alpha$ the distance from $a$ to $\beta \cup \gamma$ is less than $\delta$, and similarly 
every point $b$ on $\beta$ is within distance $\delta$ of $\alpha \cup \gamma$, and   
every point $c$ on $\gamma$ is within distance $\delta$ of $\alpha \cup \beta$. 
If every geodesic triangle in $X$ is $\delta$-slim then we say that the geodesic metric space $X$ is \emph{$\delta$-hyperbolic}. 
If $X$ is $\delta$-hyperbolic for some $\delta > 0$ then we say $X$ is \emph{hyperbolic}. 
\end{dfn}

Let $M$ be a monoid generated by a set $A$. 
Then by the \emph{undirected Cayley graph $\Gamma(M,A)$ of $M$ with respect to the generating set $A$} we mean the graph with vertex set $M$   
and where there is an undirected edge connecting $m \in M$ to $n \in M$ if and only if $ma=n$ or $na=m$ for some $a \in A$. 
Note that here we have opted to work with the right Cayley graph, but the results we prove in this subsection are also true working with the left Cayley graph instead.  
The graph $\Gamma(M,A)$ is a metric space with the usual distance metric on graphs where for $a,b \in M$ we define $d(a,b)$ to be the shortest length of a path in $\Gamma(M,A)$ from $a$ to $b$. 
This is not a geodesic metric space, but can be made into one in a natural way by  
making each edge isometric to the unit interval $[0,1]$ and extending the metric to the points of these edges in the obvious way. 
This is called the \emph{geometric realisation} of the Cayley graph. 
We say that the \emph{undirected Cayley graph of a monoid $M$ is hyperbolic} if the geometric realisation of $\Gamma(M,A)$ 
is a hyperbolic metric space. 
If $M$ is a finitely generated monoid, it may be shown that this 
property is independent of the choice of finite generating set for $M$, so it makes sense to talk about a finitely generated monoid having a hyperbolic undirected Cayley graph, without reference to any specific finite generating set.   

\begin{prop}\label{prop:hyperbolic:from:units} 
Let $M = \langle A \mid w=1 \rangle$. Let $G$ be the group of units of $M$. If $G$ is a hyperbolic group then the undirected Cayley graph of $M$ is hyperbolic. 
\end{prop}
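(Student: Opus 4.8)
The plan is to realise the geometric realisation of $\Gamma(M,A)$ as a \emph{tree of spaces} whose vertex spaces are (uniform quasi-isometric) copies of the Cayley graph of the group of units $G$ with respect to the finite generating set $\Delta$, whose edge spaces are uniformly bounded, and then to invoke the standard combination theorem that a tree of uniformly $\delta$-hyperbolic geodesic spaces with uniformly bounded edge spaces is again hyperbolic (see, e.g., \cite{BridsonHBook}). Since $G$ is assumed hyperbolic, the Cayley graph $\Gamma(G,\Delta)$ is hyperbolic, so the vertex spaces are uniformly hyperbolic and the conclusion follows. Thus the real content lies entirely in producing the decomposition, which I would extract from Adjan's description of $G$ \cite{Adjan1966} together with Zhang's complete rewriting system \eqref{e: Zhang_presentation}.

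First I would exploit the locality of the rewriting system. Every element of $M$ has a unique reduced normal form (Theorem~\ref{thm:Zhang}), and the rewrite rules of \eqref{e: Zhang_presentation} only ever rewrite factors lying in $\Delta^*$; by Lemma~\ref{lem_Z2} an element is invertible precisely when its reduced form lies in $\Delta^*$. This means that when a reduced word is right-multiplied by a generator, any reduction that occurs is confined to the suffix region where invertible pieces are being assembled, while the genuinely non-invertible transitions (multiplication by letters that begin or complete a minimal invertible piece, and by letters not occurring in $w$) only \emph{lengthen} the normal form. The plan is to use this dichotomy to separate two kinds of directions in $\Gamma(M,A)$: invertible directions, which move inside a fixed copy of $G$ and whose combinatorics is governed by the group relation $\phi(w)=1$ via Lemma~\ref{lem_Z}, and non-invertible directions, which by Lemma~\ref{lem:reduced} can never be undone while staying inside a product of invertible pieces, and which therefore behave freely and contribute tree-like (free) parts.

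From this I would build the tree of spaces: the vertex spaces are the maximal subcomplexes of $\Gamma(M,A)$ swept out by invertible moves (each quasi-isometric to $\Gamma(G,\Delta)$ because multiplying by a piece $\alpha_i\in\Delta$ moves a point a bounded distance $\le\max_i|\alpha_i|$, and Lemma~\ref{lem_Z} identifies the induced metric with the $\Delta$-word metric on $G$ up to this bounded factor), and the edges of the underlying tree record the non-invertible transitions between them, with edge spaces being the single elements at which consecutive copies meet. The key point to verify is that, after collapsing each copy of $\Gamma(G,\Delta)$, the resulting coarse incidence structure is genuinely a tree: any cycle in $\Gamma(M,A)$ that leaves and returns to a given copy of $G$ must, by Lemma~\ref{lem:reduced} and the fact that $a$ (more generally any non-invertible generator) is not invertible, backtrack through the same non-invertible transition, so the only essential cycles are those already present \emph{inside} the copies of $\Gamma(G,\Delta)$, which are harmless since those spaces are hyperbolic.

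The hard part will be making this last structural claim rigorous and uniform. Concretely, I expect the main obstacle to be defining the vertex and edge spaces precisely enough that (i) the copies of $\Gamma(G,\Delta)$ are \emph{uniformly} quasi-isometrically embedded, and (ii) the edge spaces are \emph{uniformly bounded} and the underlying complex is acyclic after collapsing the vertex spaces --- in other words, controlling exactly how the normal form changes under right multiplication so that the cycles coming from the torsion and relations of $G$ (for instance the bounded cycles one already sees in $\pres{a,b}{abab=1}$, whose group of units is $\mbb{Z}/2\mbb{Z}$) are absorbed into the hyperbolic vertex spaces rather than creating large-scale flatness. Once these structural facts are established from \eqref{e: Zhang_presentation}, hyperbolicity of $\Gamma(M,A)$ is an immediate application of the combination theorem, and this is precisely where the hypothesis that $G$ is hyperbolic is used.
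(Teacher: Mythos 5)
Your high-level strategy --- decomposing $\Gamma(M,A)$ into a tree of hyperbolic pieces glued along bounded sets and invoking a combination theorem --- is the same as the paper's, but there is a genuine gap in your identification of the vertex spaces, and that is exactly where the content of the proof sits. The maximal portion of $\Gamma(M,A)$ ``swept out by invertible moves'' from a vertex $m$ is not a copy of $\Gamma(G,\Delta)$: a path spelling an invertible piece $\alpha_i\in\Delta$ passes through intermediate vertices $m[a_1], m[a_1a_2],\dots$ that are right units but not units, so the connected piece you actually obtain is the undirected Sch\"{u}tzenberger graph of the $\mathcal{R}$-class of $m$, whose vertex set is $m$ times the submonoid $R$ of \emph{right} units rather than $mG$. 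By Zhang's structure theorem (\cite[Theorem~4.5]{Zhang1992}) one has $R\cong T\ast G$ with $T$ a free monoid of finite rank, so these pieces are strictly larger than $G$ in general; already for the bicyclic monoid $\pres{a,b}{ab=1}$ the group of units is trivial while the $\mathcal{R}$-class of $1$ is an infinite ray, so the quasi-isometry you assert with $\Gamma(G,\Delta)$ fails. The conclusion survives only because $T\ast G$ still has hyperbolic undirected Cayley graph when $G$ is hyperbolic, but to know that you must invoke Zhang's theorem, which your argument never does; hyperbolicity of $G$ alone does not identify, let alone control, the vertex spaces.

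The second gap is the tree structure, which you correctly flag as ``the hard part'' but do not supply. The paper derives it from specific structural facts about special monoids (\cite[Section~3]{GraySteinbergPaper2}): all Sch\"{u}tzenberger graphs of $\mathcal{R}$-classes of $M$ are isomorphic, each $\mathcal{R}$-class is entered by at most one edge of the Cayley graph from outside it, and the quotient graph on $\mathcal{R}$-classes is a rooted tree. Your backtracking heuristic (``any cycle leaving a copy must return through the same non-invertible transition'') is a restatement of what has to be proved rather than a proof; the uniqueness of the entry edge into each $\mathcal{R}$-class is precisely the statement that makes the edge spaces singletons and the collapsed graph acyclic. You would also need the preliminary quasi-isometry between the Sch\"{u}tzenberger graph (edges labelled by letters of $A$) and the Cayley graph of $R$ over the finite generating set of prefixes of pieces --- the correct version of your ``bounded distance'' remark, carried out for the right vertex space --- before the combination theorem can be applied.
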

\begin{proof} 
As usual, let $\Delta$ be the set of minimal invertible pieces of the relator $w$ (see the discussion above Theorem \ref{thm:Zhang} for further details). 
Let $I$ be the set of all non-empty prefixes of the words from $\Delta$, that is  
\[
I = \{x \in A^+ \ \mid \ xy \in \Delta \ \mbox{for some} \ y \in A^* \}.
\]
Let $Y = \{ [u] : u \in I \}$. 
Then, by Zhang \cite[Lemma~3.3]{Zhang1992}, $Y$ is a finite generating set for the submonoid of right units $R$ of $M$. 
Note that $R$ is the $\mathcal{R}$-class of the identity element of $M$, where $\mathcal{R}$ is Green's $\mathcal{R}$-relation on $M$ defined by saying $m \mathcal{R} n$ if and only if $mM = nM$. 
Clearly $\Delta$ is a subset of $I$. 
Let $\mathcal{G}$ be the 
underlying undirected graph of the 
right Cayley graph of the monoid $R$, with respect to the generating set $Y$.  
So $\mathcal{G}$ has vertex set $R$ and edges $\{[u], [ux]\}$ where $u \in A^*$, $x \in I$ and $\{[u], [ux]\}$ is a subset of $R$. 
Note that $\mathcal{G}$ is a connected infinite graph
and its vertices have  
bounded degree since 
$R$ is a right cancellative monoid. 
We use $\mathcal{S}$ to denote the undirected Sch\"{u}tzenberger graph of the $\mathcal{R}$-class $R$. So $\mathcal{S}$ also has vertex set $R$ but has edges $\{[u], [ua]\}$ where $u \in A^*$, $a \in A$ and $\{[u], [ua]\}$ is a subset of $R$. 

We claim that the identity map on $R$ defines a quasi-isometry between the graph $\mathcal{G}$ and the graph $\mathcal{S}$. 

To prove this claim, let $d_\mathcal{S}$ and $d_\mathcal{G}$ denote the distances in each of these graphs. Consider an arbitrary edge $\{[u], [ux]\}$ in the graph $\mathcal{G}$. Let $D$ be the maximum length of a word in $\Delta$. Then $d_{\mathcal{S}}([u],[ux]) \leq D$.  For the converse, let $\{[u], [ua] \}$ be an arbitrary edge in the graph $\mathcal{S}$. We claim that $d_\mathcal{G}([u],[ua]) \leq 2$. We may assume without loss of generality that $u$ is a reduced word. There are now two cases to consider. 

First suppose that $ua$ is a reduced word. It then follows from 
\cite[Lemma~3.3]{Zhang1992}
that $ua \in I^*$ (i.e. is a graphical product of words from $I$). Note that $ua$ may admit several different decompositions in $I^*$. Write $ua = u' \gamma$ where $\gamma \in I$ and $u' \in I^*$. If $|\gamma|=1$ then $a=\gamma \in I$ and so $d_\mathcal{G}([u],[ua])=1$. Now suppose that $|\gamma|>1$.  Write $\gamma = \gamma'a$ with $\gamma' \in I$.  Then we have $u=u'\gamma'$ and both $\{[u'], [u'\gamma'] \}$ and $\{[u'], [u'\gamma] \}$ are  edges in the graph $\mathcal{G}$. It follows that $d_\mathcal{G}([u],[ua])  = d_\mathcal{G}([u'\gamma'], [u'\gamma]) \leq 2$.        

Now suppose that $ua$ is not a reduced word. Since $u$ is reduced, it follows that we can write $ua = u'\gamma$ where $\gamma \in \Delta$ is a non-empty word. Then arguing as in the previous paragraph, either $|\gamma|=1$ and $d_{\mathcal{G}}([u],[ua])=1$, or else $|\gamma|>1$ and $d_\mathcal{G}([u],[ua])  \leq 2$. This completes the proof of the claim that the identity mapping on $R$ induces a quasi-isometry between the graph $\mathcal{G}$ and the graph $\mathcal{S}$. 

It follows from 
\cite[Theorem~4.5]{Zhang1992}
that the submonoid of right units $R$ of $M$ is isomorphic to a monoid free product $T \ast G$ where $T$ is a free monoid of finite rank, and $G$ is the group of units of the monoid $M$. Since the Cayley graph of a free monoid is a tree, it then follows that the undirected Cayley graph $\mathcal{G}$ of $R \cong T \ast G$  is  hyperbolic. Since $\mathcal{S}$ is quasi-isometric to $\mathcal{G}$ we conclude that the undirected Sch\"{u}tzenberger graph of the $\mathcal{R}$-class of the identity element is hyperbolic.  

It follows from the results in 
\cite[Section~3]{
GraySteinbergPaper2}
that (i) the Sch\"{u}tzenberger graphs of any pair of $\mc{R}$-classes of $M$ are isomorphic to each other, and (ii) for every $\mathcal{R}$-class $R'$ of $M$ there is at most one edge $\{m, ma\}$ in the Cayley graph of $M$ such that $m \in M$, $a \in A$, with $ma \in R'$ but $m \not\in R'$, and (iii) the quotient graph with vertex set the $\mathcal{R}$-classes of $M$ and edges all edges $\{m, n\}$ from the Cayley graph of $M$ such that $(m, n) \not\in \mathcal{R}$ is a rooted tree. 

Combining these observations we see that the Cayley graph of $M$ has the structure of a ``regular tree of copies of'' the hyperbolic graph $\mathcal{S}$. From this it then quickly follows 
(e.g. by applying \cite[Theorem 5.4]{gray2019algorithmic}) 
that the undirected Cayley graph of $M$ is hyperbolic.  
\end{proof}

In fact, using a similar argument, it may be shown that Proposition~\ref{prop:hyperbolic:from:units} holds more generally for any finitely presented monoid $M$ defined by a presentation of the form
\[
\langle A \mid w_1=1, \ldots, w_k=1 \rangle.
\]

\begin{prop}\label{prop_hyperbolic}
Let $M = \pres{A}{w^k=1}$  $(k \geq 2)$. 
Then the group of units of $M$ is a one-relator group with torsion.
It follows that the group of units of $M$ is a hyperbolic group, and the undirected Cayley graph of $M$ is a hyperbolic  
metric space. 
\end{prop}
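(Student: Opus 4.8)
The plan is to establish the statement in three moves: first show that the group of units $G$ of $M$ is a one-relator group whose defining relator is a proper power, and hence a one-relator group with torsion; then invoke the classical fact that one-relator groups with torsion are hyperbolic; and finally apply Proposition~\ref{prop:hyperbolic:from:units} to transfer hyperbolicity from $G$ to the undirected Cayley graph of $M$.

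For the first and main step, recall from Adjan's theorem (see the discussion preceding Theorem~\ref{thm:Zhang}) that $G \cong \pres{B}{\phi(w^k)=1}$, where $B$ is in bijection with the set $\Delta$ of minimal invertible pieces of the relator $w^k$ and $\phi$ sends each piece to its corresponding letter; so it suffices to prove that $\phi(w^k)$ is a proper power in $B^*$. First I would observe that $w$ is itself invertible in $M$, since $w \cdot w^{k-1} = w^k = 1 = w^{k-1} \cdot w$ exhibits a two-sided inverse. The key claim is then that the minimal invertible piece decomposition of $w^k$ is periodic, of the form $w^k \equiv (\pi_1 \cdots \pi_j)^k$, where $w \equiv \pi_1 \cdots \pi_j$ is the decomposition of a single copy of $w$. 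To see that no piece straddles a boundary between consecutive copies of $w$, I would use that the pieces arise greedily as shortest invertible prefixes: if the pieces covering the first copy stopped at a proper prefix $p$ of $w$, then writing $w \equiv p s$, the suffix $s = p^{-1} w$ would be invertible, forcing the next piece to terminate no later than the end of $w$ --- a contradiction with the assumption that it overruns. Since invertibility of a word is independent of its position, the same shortest-prefix pieces recur in every copy of $w$, giving the periodic decomposition. Consequently $\phi(w^k) = (\phi(\pi_1 \cdots \pi_j))^k$ is a proper $k$-th power; being a positive word it is cyclically reduced, so $G$ is a one-relator group with torsion.

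It then follows from the standard theory of one-relator groups with torsion (via B.B.\ Newman's spelling theorem \cite{lyndon_schupp}, which yields a Dehn presentation) that $G$ is hyperbolic, as already noted in the introduction. Applying Proposition~\ref{prop:hyperbolic:from:units} with this hyperbolic group of units then gives that the undirected Cayley graph of $M$ is a hyperbolic metric space, which completes the proof. The main obstacle is the periodicity claim: care is needed to show that the minimal invertible pieces respect the boundaries between consecutive copies of $w$, and this is precisely where the invertibility of $w$ and the minimality (shortest-prefix) property of the pieces are used. An alternative route to the first step avoids this computation entirely: by \cite{lallement} the monoid $M$ has torsion because $w^k$ is a proper power with $k \geq 2$; any nonidentity element of finite order is a unit, so $G$ has torsion, and a one-relator group possessing a nontrivial torsion element must have a proper-power relator by the classification of torsion in one-relator groups.
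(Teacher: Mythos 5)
Your proof is correct and follows the same three-step structure as the paper's: (1) the group of units $G$ of $M$ is a one-relator group with torsion, (2) Newman's spelling theorem gives hyperbolicity of $G$, and (3) Proposition~\ref{prop:hyperbolic:from:units} transfers this to the undirected Cayley graph of $M$. The one genuine difference is in step (1): the paper simply cites Adjan's results (with a proof attributed to \cite[Section~3]{GraySteinbergPaper2}), whereas you prove directly that $\phi(w^k)$ is a proper power by showing that the minimal-invertible-piece decomposition of $w^k$ respects the boundaries between consecutive copies of $w$. Your argument for that claim --- $w$ is invertible, so if the pieces covering the first copy stopped at a proper invertible prefix $p$ with $w \equiv ps$, then $s$ would be a non-empty invertible proper prefix of the straddling piece, contradicting the minimality (shortest-invertible-prefix) property of the pieces --- is sound, and it buys a self-contained proof where the paper relies on a citation. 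One small caveat: in your ``alternative route'' the assertion that any non-identity element of finite order in $M$ must be a unit is not justified (finite order in a monoid means the cyclic submonoid is finite, not that some power equals $1$), so that shortcut has a gap; but since your primary argument is complete, this does not affect the proof.
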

\begin{proof}
It follows from results of Adjan \cite{Adjan1966} that %
the group of units $G$ of $M$ is a one-relator group with torsion (see \cite[Section~3]{GraySteinbergPaper2} for a proof of this). 
By the Newman Spelling Theorem 
\cite[Theorem~5.5]{lyndon_schupp}
we  have that $G$ is a hyperbolic group. This and Proposition~\ref{prop:hyperbolic:from:units} imply that the undirected Cayley graph of $M$ is hyperbolic. %
\end{proof}

\section{Applications, examples and open problems}
\label{sec:final:section:applications:open:problems}

In this section we list some examples, and classes of examples, of monoids to which the main results of this paper apply. We shall also collect together a selection of open problems, and possible future research directions, which naturally arise from our results. As part of this we will identify the simplest examples of one-relator monoids for which we do not yet know whether or not the Diophantine problem is decidable.  
In general, we do not know if there is an example of a one-relator monoid of the form $\pres{A}{r=1}$ with undecidable Diophantine problem. 

Let us begin by recording some examples of one-relator monoids of the form $\pres{A}{r=1}$ where we have shown that the Diophantine problem is decidable. Consider, in particular the case of $2$-generated one-relator monoids $\pres{a,b}{r=1}$. Let $M$ denote the monoid defined by this presentation.  Very often questions about one-relator monoids can be reduced to just considering the $2$-generator case e.g. this is the case for the word problem. 

By Makanin \cite{makanin2} the Diophantine problem is decidable for the free monoid $\pres{a,b}{}$, while in \cite[Example 21]{Diekert} it is proved that it is decidable for the bicyclic monoid $\pres{a,b}{ab=1}$.  Now consider the general case $\pres{a,b}{r=1}$ and let $r = r_1 r_2 \ldots r_k$ be the decomposition of $r$ into minimal invertible pieces as described in Section~\ref{sec:DP:for:one-relator} and in Example~\ref{ex:adjan:example} and the paragraph preceding it.  
If $r \in \{a\}^*$ or $r \in \{b\}^*$ then the monoid is a free product of a free monoid of rank one and a finite cyclic group, and thus the Diophantine problem is decidable by \cite{Diekert}.  
Now suppose that both the letters $a$ and $b$ appear in the defining relator $r$.  There are then two cases to consider.  If there are minimal invertible pieces $r_i$ and $r_j$ such that the first letter of $r_i$ equals the last letter of $r_j$, then 
applying the Adjan overlap algorithm 
it follows that both $a$ and $b$ both represent invertible elements of $M$ and hence    $M$ is a group.  
In this case, $M$ is the group defined by the same one-relator group presentation, and hence $M$ is a so-called \emph{positive} one-relator group.  Such groups have been studied e.g. by Baumslag \cite{Baumslag1971} and Wise \cite{Wise2001}.  This motivates the question of whether the Diophantine problem is decidable for positive one-relator groups. Up to symmetry the case that remains is when all the invertible pieces $r_i$ $(1 \leq i \leq k)$ begin with the letter $a$ and end with the letter $b$.   This case then divides into two subcases, either (i) all of the pieces $r_i$ are equal to each other as words, or (ii) there is some pair of minimal invertible pieces $r_i$ and $r_j$ with $r_i \not\equiv r_j$.  Note that subcase (i) includes in particular the case where there is a single invertible piece.  This is precisely the case where the relator $r$ is self-overlap free meaning that no proper non-empty prefix is equal to a proper non-empty suffix of $r$.  This in turn is equivalent to saying that the group of units of the monoid is the trivial group.   Also note that many of the examples in (ii) will satisfy the conditions (C1) and (C2) (and (C3)) from Section~\ref{sec:DP:for:one-relator},  and thus  the main theorems of that section,  Theorem~\ref{thm_main} and Theorem~\ref{thm_main_2}, will apply to them. Some examples of these are listed in the introduction after Theorem \ref{t: main_thm_intro}.

A similar division into cases can also be done for one-relator monoids $\pres{A}{r=1}$ with more than two generators.  
For instance, 
as we already saw in Example~\ref{ex:Z}, 
the monoid $\langle a,b,c,d \mid aba=1 \rangle$ has decidable Diophantine problem by Theorem~\ref{thm:free:prod:application} above, since all the letters in the relator are invertible, and the group of units is the infinite cyclic group which has decidable Diophantine problem.   
Similarly the monoid  $\pres{a,b,c,d,e,f}{abcddcbbaa=1}$ has decidable Diophantine problem, again applying Theorem~\ref{thm:free:prod:application}.
Indeed, applying the Adjan overlap algorithm we deduce that all the letters $a$, $b$, $c$ and $d$ appearing in the defining relator are invertible, and the group of units of this monoid is defined by the group presentation   
\[\mathrm{Gp}\langle a,b,c,d \mid 
abcddcbbaa=1 \rangle.\]
To apply 
Theorem~\ref{thm:free:prod:application}
we need to show that this group has decidable Diophantine problem. 
To show this, note that this group can be written 
\[\mathrm{Gp}\langle a,b,c,d \mid cddc = b^{-1} a^{-1} a^{-1} a^{-1} b^{-1} b^{-1} \rangle.\]
The words $cddc$ and $b^{-1} a^{-1} a^{-1} a^{-1} b^{-1} b^{-1}$ are 
non-primitive since the words $cddc$ and $bbaaab$ are not Christoffel words (see e.g. \cite{Reutenauer2019}), and neither are any of the conjugates of these words, since the first word have the same number of $c$s and $d$s, and similarly for the second word. It is known, see \cite{Kharlampovich1998, Juhasz1994, Bestvina1992},
that a cyclically pinched one-relator group defined by a presentation $\mathrm{Gp}\langle A | u=v \rangle$, where $u$ and $v$ are non-primitive words written over disjoint sets of letters, and it is not the case that both $u$ and $v$ are proper powers, is hyperbolic. 
Hence the group of units of  $\pres{a,b,c,d,e,f}{abcdcbba=1}$ is a hyperbolic group and thus by
Theorem~\ref{thm:free:prod:application} above this monoid has decidable Diophantine problem. 
Many other examples similar to this can be written down. 
This gives a reasonably rich source of examples of one-relator monoids $\pres{A}{r=1}$ which have solvable Diophantine problem as a consequence of the fact that their groups of units are hyperbolic. 
We do not know in general whether having a hyperbolic group of units is enough to imply that a one-relator monoid of the form $\pres{A}{r=1}$ has solvable Diophantine problem. 
As explained in the introduction, this was one of the original motivating questions for the work done in this paper. By Proposition \ref{prop_hyperbolic} and Theorem \ref{thm_main_2}, a positive answer to this questions implies decidability of word equations with length constraints.

In light of this discussion, it is sensible to identify the simplest examples of one-relator monoids of the form $\pres{A}{r=1}$ for which we neither know that the Diophantine problem is decidable, but we also do not know of a reduction theorem (like the theorems from Section~\ref{sec:DP:for:one-relator} above) of a known difficult open problem. 
Thus we ask whether either of the monoids  $\pres{b,c}{b^2c=1}$ or $\pres{a,b,c}{abc=1}$ has decidable  Diophantine problem?
Initial investigations indicate that this might relate to solving word equations with a variation on the notion of twisting, in the sense of \cite{Diekert2017a}.
More generally we ask the following 
\begin{question}\label{q: first_question}
If the word $w \in A^*$ has no self overlaps,
i.e.\ there is no non-empty word which is both a proper prefix of $w$ and a proper suffix of $w$,  
then is the Diophantine problem 
for the one-relator monoid
$
\langle A \mid w=1 \rangle
$
decidable?
\end{question}
Note that the condition that $w$ has no self overlaps is equivalent to saying the group of units of this monoid is trivial (this follows from the discussion immediately before the statement of Theorem~\ref{thm:Zhang}).  

The corresponding class of monoids with torsion are 
also
not covered by any of the theorems in this paper. 
Thus we ask whether 
$\pres{b,c}{bcbc=1}$ 
has decidable Diophantine problem? 
More generally, of course, we can ask whether the Diophantine problem is decidable for monoids $\pres{A}{w^n=1}$ where $w$ has no self overlaps.  

Finally, we restate some natural questions which
have arisen in this work. 
As already mentioned above, if any of these problems has a positive answer, then as a corollary this would give a positive solution to the 
open problem of solving word equations with length constraints. 

\begin{question}
Is the Diophantine problem decidable for one-relator monoids 
of the form $\pres{A}{w^n=1}$ where $n>1$?    
\end{question}

\begin{question}  \label{q: last_question}
Let $M$ be the monoid defined by $\pres{A}{w=1}$   
and let $G$ be the group of units of $M$. 
If the Diophantine problem is decidable in G, then does it follow that it is decidable in $M$?  
\end{question}

It follows from the results in the present paper that the positive $AE$-theory is in general undecidable in  the classes of monoids from Questions \ref{q: first_question} through Question \ref{q: last_question} (due to Theorem \ref{t: undec_AE_one_relator}, Proposition \ref{prop_hyperbolic}, and Remark \ref{r: n_bicyclic}).

\section{Acknowledgements}

\noindent We would like to thank  the anonymous reviewer who carefully read our paper, and whose comments and suggestions helped improve it.

\medskip

\noindent We are also thankful to Olga Kharlampovich and Alexei Miasnikov for useful conversations.

\medskip

\noindent This research was initiated during the Heilbronn Focused Research Workshop 
``Equations in One-Relator Groups and Semigroups’’, in September 2018, at the ICMS in Edinburgh 
organised by Laura Ciobanu of Heriot-Watt University.

\medskip

\noindent  The first named author was supported by the ERC grant PCG-336983, by the Basque Government grant IT974-16, and by the MINECO Grant PID2019-107444GA-I00.

\bibliography{bib.bib}

\end{document}